\newtheorem{theorem}{Theorem}[section]
\newtheorem{lemma}[theorem]{Lemma}
\newtheorem{proposition}[theorem]{Proposition}
\newtheorem{corollary}[theorem]{Corollary}
\theoremstyle{definition}
\newtheorem{definition}[theorem]{Definition}
\theoremstyle{example}
\newtheorem{example}[theorem]{Example}
\theoremstyle{remark}
\newtheorem{remark}[theorem]{Remark}
\theoremstyle{formula}
\newtheorem{formula}[theorem]{}
\newtheorem{assum}[theorem]{Assumption}
\newtheorem{review}[theorem]{}
\numberwithin{equation}{section}
\newcommand{\ms}{\medskip} 
\newcommand{\sm}{\smallskip}
\newcommand{\si}{^\sigma}
\newcommand{\msi}{^{-\sigma}}
\newcommand{\M}{\mathcal{M}}
\newcommand{\GM}{{\rm gr}\mathcal{M}}
\newcommand{\R}{{\it Rad \,}}
\newcommand{\GR}{{\rm gr}{\it Rad \,}}
\newcommand{\T}{\mathcal{T}}
\newcommand{\al}{\alpha}
\newcommand{\be}{\beta}
\newcommand{\Ga}{\Gamma}
\newcommand{\ga}{\gamma}
\newcommand{\de}{\delta}
\newcommand{\De}{\Delta}
\newcommand{\e}{\epsilon}
\newcommand{\ep}{\varepsilon}
\newcommand{\la}{\lambda}
\newcommand{\La}{\Lambda}
\newcommand{\op}{\oplus}
\newcommand{\wh}{\widehat}
\newcommand{\hJ}{{\wh J}}
\newcommand{\hj}{\hJ}
\newcommand{\ts}{\textstyle}
\newcommand{\wt}{\widetilde}
\newcommand{\Id}{{\rm Id}}
\newcommand{\Mat}{{\rm Mat}}
\newcommand{\ch}{\sp{\scriptscriptstyle\vee}}
\newcommand{\alg}{_{\rm alg}}
\newcommand{\rmb}{{\rm B}}
\newcommand{\lan}{\langle}
\newcommand{\ran}{\rangle}
\newcommand{\scL}{\mathcal{L}}
\newcommand{\scP}{\mathcal{P}}
\newcommand{\scQ}{\mathcal{Q}}
\newcommand{\scS}{\mathcal{S}}
\newcommand{\scT}{\mathcal{T}}
\DeclareMathOperator{\TKK}{TKK} 
\newcommand{\frK}{\TKK}
\DeclareMathOperator{\supp}{supp} 
\DeclareMathOperator{\rmbc}{BC} 
\DeclareMathOperator{\End}{End} \DeclareMathOperator{\AC}{AC}
\DeclareMathOperator{\FC}{FC} \DeclareMathOperator{\rmH}{H}
\newcommand{\frS}{\mathfrak{S}} 
\newcommand{\fru}{\mathfrak{p}}
\newcommand{\frsu}{\mathfrak{sp}}
\newcommand{\eso}{\mathfrak{eo}}
\newcommand{\fro}{\mathfrak{o}}
\newcommand{\ZZ}{\mathbb{Z}}
\newcommand{\CC}{\mathbb{C}}
\begin{document}

\title[Lie tori of type $\rmb_2$ and
graded-simple Jordan structures]{Lie tori of type $\rmb_2$ and
graded-simple Jordan structures covered by a triangle}

\author{ Erhard Neher\and Maribel Toc\'on}
\address{Department of Mathematics and Statistics,
University of Ottawa, Ottawa, Ontario K1N 6N5, Canada}
\email{neher@\allowbreak uottawa.ca}
\thanks{The first author is partially supported by a Discovery Grant of the
Natural Sciences and Engineering Research Council of Canada, and the
second  by  MEC and FEDER (MTM2007-61978)}

\address{Departamento de Estad\'istica e Investigaci\'on Operativa,
Universidad de C\'ordoba, Puerta Nueva s/n, C\'ordoba, 14071,
Spain.} \email{td1tobam@uco.es}

{\renewcommand{\thefootnote}{} \footnotetext{2000
\textit{Mathematics Subject Classification.} Primary 17B70;
Secondary 17B60, 17C10, 17C50  }}

{\renewcommand{\thefootnote}{} \footnotetext{\textit{Key words and
phrases.}  Root-graded Lie algebras, Lie torus, triangulated Jordan
structures (algebra, triple system, pair), Jordan structures covered
by a triangle, coordinatization  }}

\date{\today}

\maketitle
\newcommand{\blankbox}[2]{%
  \parbox{\columnwidth}{\centering
   \setlength{\fboxsep}{0pt}%
    \fbox{\raisebox{0pt}[#2]{\hspace{#1}}}%
}%
}

\begin{abstract}  We classify two classes of $\rmb_2$-graded Lie algebras which have a
second compatible grading by an abelian group $\La$: (a)
$\La$-graded-simple, $\La$ torsion-free and (b)
division-$\La$-graded. Our results describe the centreless cores of
a class of affine reflection Lie algebras, hence apply in particular
to the centreless cores of extended affine Lie algebras, the
so-called Lie tori, for which we recover results of Allison-Gao and
Faulkner. Our classification (b) extends a recent result of
Benkart-Yoshii.

Both classifications are consequences of a new description of Jordan
algebras covered by a triangle, which correspond to these Lie
algebras via the Tits-Kantor-Koecher construction. The Jordan
algebra classifications follow from our results on
graded-triangulated Jordan triple systems. They generalize work of
McCrimmon and the first author as well as the
Osborn-McCrimmon-Capacity-$2$-Theorem in the ungraded case.
\end{abstract}

\section*{Introduction}

This paper deals with two related algebraic objects, Lie algebras
graded by the root system $\rmb_2$ and Jordan structures (algebras,
triple systems and pairs), covered by a triangle of idempotents,
respectively tripotents. Its aim is to classify the graded-simple
structures in these categories. \sm

On the Lie algebra side, the motivation for this paper comes from
the theory of extended affine Lie algebras, which generalize affine
Kac-Moody Lie algebras and toroidal Lie algebras (\cite{aabgp}), and
the even more general affine reflection Lie algebras (\cite{Npers},
LA=Lie algebra):
$$
 \text{affine LA} \subset \text{extended affine LA} \subset
   \text{affine reflection LA}
$$
As explained in \cite[\S6]{Npers}, an important ingredient in the
structure theory of these Lie algebras is a certain subquotient, the
so-called centreless core, whose structure is, respectively, as
follows:
\begin{eqnarray*}
  && \text{un/twisted loop algebra} \subset \text{centreless Lie torus}
 \\ && \quad \subset
  \text{centreless predivision-root-graded Lie algebra}
\end{eqnarray*}
It is therefore of interest to understand the structure of Lie tori
or predivision-root-graded Lie algebras. As the reader will see
below, these Lie algebras are defined in terms of a finite
irreducible root system. One of the goals of this paper is to
elucidate the structure of root-graded Lie algebras of type
$\rmb_2$, which is one of the most complicated cases.

Let $\De$ be the root system $\rmb_2$ and put $R=\De \cup \{0\}$ (we
assume here $0\notin \De$). Also, let $\La$ be an abelian group. We
consider Lie algebras defined over a ring $k$ with $\frac{1}{2}$ and
$\frac{1}{3}\in k$ which have a decomposition
\begin{enumerate}
 \item[(RG1)] $L=\bigoplus_{\al \in R, \, \la \in \La} L_\al^\la$
with $[L_\al^\la, \, L_\be^\mu] \subset L_{\al + \be}^{\la + \mu}$
($= 0$ if $\al + \be \not\in R$), satisfying

\item[(RG2)] $L_0 = \sum_{\al \in \De} [L_\al, L_{-\al}]$.
 \end{enumerate} We call $0\ne e\in L_\al^\la$, $\al \in \De$, {\it invertible\/} if there exists
$f\in L_{-\al}^{-\la}$ such that $h=[e,\, f]$ acts on $x_\be \in
L_\be^\mu$, $\be\in R$, as $[h,x_\be] = \lan \be,\al\ch\ran x_\be$
for $\lan \be,\al\ch\ran$ the Cartan integer of $\al,\be\in R$. This
perhaps unusual definition is justified by examples, in which
invertible elements as defined above are given in terms of
invertible elements of coordinate algebras. A Lie algebra satisfying
(RG1) and (RG2) is called
\begin{enumerate}

\item[-] {\it $\rmb_2$-graded with a compatible $\La$-grading\/} if every
$L_\al^0$, $\al\in \De$, contains an invertible element,

\item[-] {\it $\rmb_2$-graded-simple\/} if $L$ is $R$-graded with a
compatible $\La$-grading and if  $\{0\}$ and $L$ are the only
$\La$-graded ideals of $L$,

\item[-] {\it predivision-$\rmb_2$-graded\/} if $L$ is $R$-graded with a
compatible $\La$-grading and every $0\ne L_\al^\la$, $\al \in \De$,
contains an invertible element,

\item[-] {\it division-$\rmb_2$-graded\/} if  $L$ is $R$-graded with a
compatible $\La$-grading  and every nonzero element in $L_\al^\la$,
$\al \in \De$, is invertible,

\item[-] a {\it Lie torus of type $\rmb_2$\/} if $k$ is a field, $L$ is
division-$\rmb_2$-graded and $\dim_k L_\al^\la \le 1$ for all $\al
\in \De$.
\end{enumerate}

If $k$ is a field of characteristic $0$, a split simple Lie algebra
of type $\rmb_2$ is clearly $\rmb_2$-graded in the sense above. So
we have two isomorphic examples, the orthogonal Lie algebra
$\fro_5(k)$ and the symplectic Lie algebra $\frsu_2(k)$, using
$\rmb_2= {\rm C}_2$. Another example is not far off: We can re-grade
the standard root space decomposition of $\mathfrak{sl}_4(k)$,
viewed as an ${\rm A}_3$-grading, by ``folding'' ${\rm A}_3$ into
$\rmb_2$, thus giving $\mathfrak{sl}_4(k)$ the structure of a simple
$\rmb_2$-graded Lie algebra.

The reader will not be very surprised in learning that one can
replace $k$ in the three examples $\mathfrak{sl}_4(k)$, $\frsu_2(k)$
and $\fro_5(k)$ by  more general, not necessarily commutative
coordinates and still gets a simple $\rmb_2$-graded Lie algebra. In
doing so, $\frsu_2(.)$ and $\fro_5(.)$ will no longer be isomorphic.
Our first classification result says that, up to central extensions
and allowing graded-simple coordinates, these are all examples in
the graded-simple case with $\La$ torsion-free: \sm

{\bf Theorem A} (Th.~\ref{secfinres}) {\it Let $\La$ be torsion-free
and let $L$ be a Lie algebra over a ring $k$ containing
$\frac{1}{2}$ and $\frac{1}{3}$. Then $L$ is
 centerless $\rmb_2$-graded-simple if and only if $L$ is graded isomorphic to

\begin{enumerate}

\item[\rm (I)] $\mathfrak{sl}_4(A)/Z\big( \mathfrak{sl}_4(A)\big)$
where $\mathfrak{sl}_4(A) = \{ X\in \mathfrak{gl}_4(A): {\rm tr}(X)
\in [A,A]\}$, and $A$ is a graded-simple associative $k$-algebra,

 \item[\rm (II)] $\frsu_2(A,\pi)/Z(\frsu_2(A,\pi))$ for
$A$ as in {\rm (I)} with involution $\pi$,

\item[\rm (III)] an elementary orthogonal Lie algebra $\eso(Q)$
where $Q$ is a graded-nondegenerate quadratic form with base point
and containing two hyperbolic planes over a graded-field.
\end{enumerate} \sm}

We point out that even the case $\La=\{0\}$ was not explicitly known
before, although it could have been derived from \cite{MNSimple}.
Moreover, in the application to affine reflection Lie algebras and
their centreless core our assumption on $\La$ is fulfilled. \sm

Our second classification result (Th.~\ref{findivres}) is parallel
to Th.~A: It allows an arbitrary $\La$, but assumes that the Lie
algebra $L$ is centreless and division-$\rmb_2$-graded. In this
setting, case (I) disappears, the algebra $A$ in (II) is
division-graded and $Q$ in (III) is graded-anisotropic. In
characteristic $0$ this result has also been obtained by
Benkart-Yoshii \cite[Th.~4.3]{BY} using different methods and giving
a less precise description of the Lie algebras. We can easily derive
from our results a classification of centreless Lie tori of type
$\rmb_2$. \sm

{\bf Corollary} (Cor.~\ref{finalres}) {\it A Lie algebra $L$ is a
centreless Lie torus of type $\rmb_2$ if and only if $L$ is graded
isomorphic to one of the following:
\begin{enumerate}
 \item[\rm (I)] a symplectic Lie algebra $\frsu_2(A,\pi)$ for
$A$ a noncommutative associative torus with involution $\pi$, or to
\item[\rm (II)] an elementary orthogonal Lie algebra
$\eso(Q)$ for $Q$ a graded-aniso\-tro\-pic quadratic form  over an
associative torus with the same properties as $Q$ in {\rm (III)} of
{\rm Th.~{\rm A}}.
\end{enumerate}}

Again in characteristic $0$ this has also been obtained by
Benkart-Yoshii \cite[Th.~5.9]{BY}. For $k=\CC$ and $\La=\ZZ^n$, the
Lie tori classification is due to Allison-Gao \cite{AG}. A different
approach to this classification has recently been given by Faulkner
\cite{F} in the context of his classification of Lie tori of type
${\rm BC}_2$. \ms

We obtain the Lie algebra results as a consequence of our results on
so-called triangulated Jordan structures, by linking Lie algebras to
Jordan structures via the Tits-Kantor-Koecher construction. This
brings us to the second goal for this paper, the classification of
graded-simple-triangulated Jordan structures. In this introduction,
we restrict ourselves to Jordan algebras for which the results are
easier to state. A quadratic unital Jordan algebra $J$ is called
{\it graded-triangulated\/} if $J=\bigoplus_{\la \in \La} J^\la$ is
graded by some abelian group $\La$ and contains two supplementary
orthogonal idempotents $e_1, e_2\in J^0$ strongly connected by some
$u\in J_1(e_1) \cap J_1(e_2) \cap J^0$. Of course, {\it
graded-simple-triangulated\/} means graded-simple and
graded-triangulated. \sm

Analogously to the Lie algebra case, two isomorphic examples of
triangulated Jordan algebras are immediate for $k$ algebraically
closed, the Jordan algebra $\rmH_2(k)$ of $2\times 2$ symmetric
matrices and the Jordan algebra $\AC\alg(k)$ of a $3$-dimensional
nondegenerate quadratic form. In addition, also the Jordan algebra
$\Mat_2(k)$ is naturally triangulated. And as in the Lie algebra
case, the reader will not be surprised to learn that one can replace
$k$ by more general coordinates and still gets a graded-simple
triangulated Jordan algebra.
 \sm

{\bf Theorem B} (Th.~\ref{classalgth}.b) {\it A triangulated
quadratic Jordan algebra $J$ which is graded-simple with respect to
a grading by a torsion-free abelian group $\La$ is graded isomorphic
to one of the following Jordan algebras:
\begin{enumerate}

\item[{\rm (I)}] full matrix algebra $\Mat_2(A)$ for a
 noncommutative graded-simple associative  unital $A$;

\item[{\rm (II)}] hermitian matrices $\rmH_2(A,A_0, \pi )$ for a  noncommutati\-ve
graded-simple associative unital $A$ with ample subspace $A_0$ and
graded involution $\pi$;

\item[{\rm (III)}] Clifford Jordan algebra $\AC\alg(q,F, F_0)$
for a graded-nondegenerate $q$ over a graded-field $F$ with
Clifford-ample subspace $F_0$.
\end{enumerate}
\noindent Conversely, all Jordan algebras in {\rm (I)--(III)} are
graded-simple-triangu\-la\-ted. } \sm

To put Theorem B into perspective, let us point out that already the
case $\La=\{0\}$ is nontrivial: The
``Osborn-McCrimmon-Capacity-$2$-Theorem'' (\cite[6.3]{ark},
\cite[22.2]{taste}), which classifies simple Jordan algebras of
capacity $2$ (= simple triangulated Jordan algebras with division
diagonal Peirce spaces) is the most complicated piece of the
classification of simple Jordan algebras with capacity, and a
cornerstone of the classification of simple Jordan algebras.

As for Th.~A, we prove a second classification result
(Cor.~\ref{divalg}) in which the assumption on $\La$ is replaced by
the condition that elements in the Peirce space $J_{12}$ are sums of
invertible elements. And of course, there are also corollaries for
triangulated Jordan algebra tori, formulated in Cor.~\ref{jazncla}
for $\La=\ZZ^n$. \sm

We have mentioned that the Lie algebra results follow from our
results on Jordan structures, viz., the cases (I)--(III) in Theorem
A correspond to the cases (I)--(III) in Theorem B. So how do we
prove the Jordan algebra result? In fact, we first prove the results
for graded-simple-triangulated Jordan triple systems by adapting the
approach of \cite{MNSimple} to the graded-simple setting. This paper
deals with the case $\La=\{0\}$ and so generalizes the
Osborn-McCrimmon-Capacity-$2$-Theorem to Jordan triple systems. Once
the triple system case has been established, we can derive the
results for Jordan algebras and Jordan pairs (Th~\ref{classpairs},
Cor.~\ref{divpair} and Cor.~\ref{jordantori}) by standard
techniques. \ms

The paper is divided into seven sections. In the first two sections
we establish the terminology,  identities and general results about
graded Jordan triple systems and graded-triangulated Jordan triple
systems, respectively. Proofs in the first two sections are mainly
left to the reader since they are easy generalizations of the
corresponding ungraded cases. In sections \S\ref{sec:trhms} and
\S\ref{sec:triclis} we present our two basic models for
graded-triangulated Jordan triple systems, the hermitian matrix
systems and the Clifford systems and prove Coordinatization Theorems
for both of them (Th.~\ref{hermitian} and Th.~\ref{clifford}).
Section \S\ref{sec:grsimcla} is devoted to classifying
graded-simple-triangulated Jordan triple systems. As a corollary we
obtain a classification of division-triangulated Jordan triple
systems (Cor.~\ref{divgradclassi}) and triangulated Jordan triple
tori (Cor.~\ref{jtstorclassi}). These classification theorems are
extended to Jordan algebras and Jordan pairs  in
\S\ref{sec:triangjp}. Finally, in the last section, we apply our
results to Lie algebras. \ms

Unless specified otherwise, all algebraic structures are defined
over an arbitrary ring of scalars, denoted $k$, and are assumed to
be graded by an abelian group $\Lambda$, written additively. We will
use Loos' Lecture Notes \cite{JP} as our basic reference for Jordan
triple systems and Jordan pairs.

\section{Graded Jordan triple systems} \label{sec:grJTS}

This section introduces some basic notions of graded Jordan triple
systems. For example we establish in Th.~\ref{grsiminh} that
Peirce-$2$- and Peirce-$0$-spaces of a degree $0$ tripotent inherit
graded-simplicity. \sm

A $k$-module $M$ is  {\it graded by $\La$} if $M= \bigoplus_{\la \in
\La}\, M^\la$ where $(M^\la : \la \in \La)$ is a family of
$k$-submodules of $M$. In this case, we call $M$ {\it
$\La$-graded\/} if the {\it support set} $\supp_\La \{ \la \in \La :
M^\la \ne 0 \}$ generates $\La$ as an abelian group. Of course, if
$M$ is graded by $\La$, it is $\Xi$-graded for $\Xi$ the subgroup
generated by $\supp_\La M$. But it is usually more convenient to
just consider graded modules (and triple systems) as opposed to
$\La$-graded ones. We say that {\it $M$ is graded\/} if $M$ is
graded by some (unimportant) abelian group, which for simplicity we
assume to be $\La$. A {\it homogeneous element} of a graded $M$ is
an element of $\bigcup_{\la \in \La} M^\la$. If
$M=\bigoplus_{\lambda \in \Lambda}M^\lambda$ and
$N=\bigoplus_{\lambda \in \Lambda}N^\lambda$ are graded modules, a
$k$-linear map $\varphi : M\rightarrow N$ is said to be {\it
homogeneous of degree $\ga \in \La$} if $\varphi (M^\la)\subseteq
N^{\la +\ga}$ for all $\la \in \La$.

A Jordan triple system $J$ with quadratic operator $P$ and triple
product $\{.,.,.\}$ is {\it graded by $\La$} if the underlying
module is so, say $J=\bigoplus_{\lambda \in \Lambda}J^\lambda$, and
the family $(J^\la : \la \in \La)$ satisfies $P(J^\lambda)J^\mu
\subseteq J^{2\lambda + \mu}$ and $\{J^\la , J^\mu , J^\nu
\}\subseteq J^{\la + \mu +\nu}$ for all $\lambda, \mu , \nu \in
\Lambda$. We will say that $J$ is {\it $\La$-graded\/} if $J$ is
graded by $\La$ and the underlying module is $\La$-graded. As for
modules, we will simply speak of a {\it graded Jordan triple
system\/} if the grading group $\La$ is not important.

If $J$ and $J'$ are graded Jordan triple systems, a homomorphism
$\varphi : J\rightarrow J'$ is said to be {\it graded} if it is
homogeneous of degree $0$. Correspondingly, a {\it graded
isomorphism}  is  a bijective graded homomorphism, and we say that
$J$ and $J'$ are {\it graded isomorphic}, written as $J\cong_\La
J'$,  if there exists a graded isomorphism between $J$ and $J'$.

Let  $J$ be a graded Jordan triple system. A subsystem $M$ of $J$ is
called {\it graded} if $M = \bigoplus_{\la \in \La} (M \cap J^\la)$.
If $M$ is an arbitrary subsystem  of $J$, the greatest graded
subsystem of $J$ contained in $M$ is $M^{\rm gr} = \bigoplus_{\la
\in \La} (M \cap J^\la)$. If $M$ is an ideal of $J$, then so is
$M^{\rm gr}$. We also note that the quotient of $J$ by a graded
ideal is again graded with  respect to the canonical quotient
grading. We call $J$ {\it graded-simple} if $P(J)J \ne 0$ and every
graded ideal is either $0$ or equal to $J$. We say that $J$ is {\it
graded-prime} if $P(I)K=0$ for graded ideals $I,K$ of $J$ implies
$I=0$ or $K=0$, and {\it graded-semiprime} if $P(I)I=0$ for a graded
ideal $I$ implies $I=0$. We denote by $\T (J)=\{x\in J: P(x)J=0\}$
the set of {\it trivial elements} of $J$, and put $\T^\Lambda
(J):=\bigcup_{\lambda \in \Lambda} \T^\lambda (J)$, where
$\T^\lambda (J)=\T (J)\cap J^\lambda$. We say that $J$ is {\it
graded-nondegenerate} if $\T^\Lambda (J)= 0$. We note that {\it if
$J$ is graded-nondegenerate, it is also graded-semiprime.} Finally,
we say that $J$ is {\it graded-strongly prime} if it is graded-prime
and graded-nondegenerate, and {\it division-graded} if it is nonzero
and every nonzero homogeneous element is invertible in $J$.

Recall that the {\it McCrimmon radical}  $\M (J)$ of a Jordan triple
system $J$ is the smallest ideal of $J$ such that the quotient $J/\M
(J)$ is nondegenerate \cite[\S4]{JP}. It can be constructed as
follows: $\M (J):=\bigcup_\alpha\M_\alpha(J)$, where $\M_0(J)=0$,
$\M_1(J)$ is the ideal of $J$ generated by the set of trivial
elements $\T (J)$ of $J$ and, by using transfinite induction, the
ideals $\M_\alpha (J)$ are defined by $\M_\alpha(J)/\M_{\alpha -
1}(J)=\M_1\big(J/\M_{\alpha-1}(J)\big)$ if $\al$ is a non-limit
ordinal and $\M_\alpha (J)=\bigcup _{\beta < \alpha}\M_\beta(J)$ for
a limit ordinal $\alpha$.

\begin{definition} Let $J$ be a  graded  Jordan triple system
and let $\M (J)$ be its  McCrimmon radical. We define the {\it
graded-McCrimmon radical} of $J$, denoted ${\GM} (J)$, as the
greatest graded ideal contained in $\M(J)$, i.e., $${\GM} (J):=
\M(J)^{\rm gr} = \textstyle \bigoplus_{\la \in \La} \big(J^\la \cap
\M(J)\big).$$ Thus $\GM(J) = \bigoplus_{\la \in \La}\, \GM^\la(J)$
for $\GM^\la(J) = J^\la \cap \M(J)$. The following characterization
is immediate {f}rom the definition, see \cite[\S4]{JP} for the
ungraded case.
\end{definition}

\begin{proposition} Let $J$ be a  graded  Jordan triple system.
Then the homogenous spaces $\GM^\la(J)$ of\/ ${\GM}(J)$ are
$\GM^\la(J) = \bigcup_\al (\M_\al(J) \cap J^\la)$  for $\M_\al(J)$
as defined above.   The graded-McCrimmon radical is the smallest
graded ideal of $J$ such that the quotient $J/{\GM}(J)$ is
graded-nondegenerate.
\end{proposition}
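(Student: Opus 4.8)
The plan is to prove the two assertions of the Proposition separately, both by transporting the known ungraded structure of $\M(J)$ through the grading. Throughout, the key point to establish first is that \emph{the graded-McCrimmon radical coincides with the McCrimmon radical} when the latter is computed via a filtration whose stages respect the grading. So I would begin by verifying that each stage $\M_\al(J)$ in the transfinite construction is in fact a graded ideal of $J$. This is an induction on $\al$. The base case $\M_0(J)=0$ is trivially graded, and the limit-ordinal case is a union of graded ideals, hence graded. For the non-limit successor step, I would use that $\T^\La(J)$ generates $\M_1(J)$: because $J$ is graded, the set $\T(J)$ of trivial elements is itself graded (an element $x$ with $P(x)J=0$ has all its homogeneous components trivial, which follows from the grading of $P$ together with graded-nondegeneracy arguments of the kind already set up in this section), so the ideal it generates is graded. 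Passing to the quotient $J/\M_{\al-1}(J)$, which is graded by the inductive hypothesis and the remark that quotients by graded ideals inherit a grading, one applies the same fact one rung higher.

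Granting that every $\M_\al(J)$ is a graded ideal, the first formula follows almost immediately. Since $\M(J)=\bigcup_\al \M_\al(J)$ and each $\M_\al(J)$ is graded, intersecting with the homogeneous space $J^\la$ commutes with the union, giving
\[
\GM^\la(J)=J^\la\cap\M(J)=J^\la\cap\textstyle\bigcup_\al\M_\al(J)=\bigcup_\al\bigl(\M_\al(J)\cap J^\la\bigr),
\]
which is exactly the claimed description. The monotonicity of the family $(\M_\al(J))$ is what makes the union over $\la$-components well-behaved, so no delicate commutation beyond this is needed.

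For the second assertion I would argue that $\GM(J)$ is the smallest graded ideal with graded-nondegenerate quotient. First, $J/\GM(J)$ is graded-nondegenerate: by definition $\GM(J)=\M(J)^{\rm gr}$ is the greatest graded ideal inside $\M(J)$, and I would show that $\T^\La\bigl(J/\GM(J)\bigr)=0$ by lifting a homogeneous trivial element of the quotient to a homogeneous element of $J$ whose image in $J/\M(J)$ is trivial; since $J/\M(J)$ is nondegenerate, that element already lies in $\M(J)$, and being homogeneous it lies in $\M(J)^{\rm gr}=\GM(J)$, so the quotient element is zero. For minimality, let $K$ be any graded ideal with $J/K$ graded-nondegenerate. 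Because $\M(J)$ is the \emph{smallest} ideal (graded or not) with nondegenerate quotient, and graded-nondegeneracy of $J/K$ forces $\M(J)\subseteq K$ by the ungraded universal property applied after checking that nondegeneracy of the quotient is detected on homogeneous trivial elements, we get $\M(J)\subseteq K$; intersecting, $\GM(J)=\M(J)^{\rm gr}\subseteq K^{\rm gr}=K$, using that $K$ is graded. The main obstacle, and the step I would be most careful about, is the very first one: proving that trivial elements of a graded Jordan triple system form a graded set (equivalently that $\M_1(J)$ is graded), since everything downstream rests on it; this is where the interplay between the grading of $P$ and the definition of triviality must be handled cleanly rather than quoted from the ungraded theory.
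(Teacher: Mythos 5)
The paper offers no written proof of this Proposition --- it declares the characterization ``immediate from the definition'' --- so your attempt can only be measured against what the definition actually yields, and by that measure it contains one genuine gap. The load-bearing claim of your opening step, that $\T(J)$ is graded (every homogeneous component of a trivial element is again trivial), is not provable from the axioms and is false in general: for $x=x^{-1}+x^{0}+x^{1}$ with $\La=\ZZ$ and $y\in J^\nu$, the degree-$\nu$ component of $P(x)y$ is $P(x^{0})y+\{x^{-1},y,x^{1}\}$, so $P(x)J=0$ does not force $P(x^{0})J=0$. This is exactly the phenomenon the paper isolates in Def.~\ref{quad}, where $\GR Q$ is in general a proper graded submodule of $\R Q$; and if your claim were true, $\M(J)$ itself would be graded and $\GM(J)=\M(J)$, which would make the whole definition of the graded-McCrimmon radical pointless. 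Fortunately the claim is not needed where you first invoke it: the identity $\GM^\la(J)=J^\la\cap\M(J)=\bigcup_\al\bigl(\M_\al(J)\cap J^\la\bigr)$ is pure distributivity of intersection over an increasing union and holds whether or not the stages $\M_\al(J)$ are graded. Your argument that $J/\GM(J)$ is graded-nondegenerate is also correct as written.

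Where the gap becomes fatal is minimality. You deduce $\M(J)\subseteq K$ from the ungraded universal property ``after checking that nondegeneracy of the quotient is detected on homogeneous trivial elements'' --- but that check is the same false statement again: graded-nondegeneracy of $J/K$ does not imply nondegeneracy of $J/K$, and the inclusion $\M(J)\subseteq K$ is simply too strong (only $\GM(J)\subseteq K$ is being claimed). A correct route is: for a graded ideal $K$ with $J/K$ graded-nondegenerate, the quotient map carries $\M(J)$ into $\M(J/K)$, hence carries the graded ideal $\GM(J)$ into the greatest graded ideal contained in $\M(J/K)$, i.e.\ into $\GM(J/K)$; so everything reduces to showing that a graded-nondegenerate system has vanishing graded-McCrimmon radical. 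That last step is where the actual content lives (one can, for instance, use local nilpotence of $\M(J)$ as in the paper's Prop.~\ref{nonde} to extract a nonzero homogeneous trivial element from a nonzero graded ideal sitting inside $\M(J)$), and it is precisely the step your proposal skips by quoting the ungraded theory where it does not apply.
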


We will need the following result.

\begin{proposition}\label{nonde}  {\rm (see
\cite{localnil}, \cite{KZ} for $\La=0$)}  If $J$ is  a graded-simple
Jordan triple system,  then $J$ is graded-nondegenerate.
\end{proposition}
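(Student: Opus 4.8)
The plan is to play $J$ off against its graded-McCrimmon radical $\GM(J)$. By the Definition, $\GM(J)=\bigoplus_{\la\in\La}\big(J^\la\cap\M(J)\big)$ is a graded ideal of $J$, so graded-simplicity leaves only the two possibilities $\GM(J)=0$ or $\GM(J)=J$. The preceding Proposition identifies $\GM(J)$ as the smallest graded ideal for which the quotient $J/\GM(J)$ is graded-nondegenerate, and this makes the first alternative do all the work for free: if $\GM(J)=0$, then $J=J/\GM(J)$ is graded-nondegenerate, which is exactly the claim. Thus the whole burden of the Proposition is to eliminate the second alternative $\GM(J)=J$.

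First I would rephrase that alternative. Since $\GM(J)=\bigoplus_{\la}\big(J^\la\cap\M(J)\big)$, the equality $\GM(J)=J$ forces $J^\la\subseteq\M(J)$ for every $\la$, i.e.\ $J=\M(J)$; conversely $\M(J)=J$ gives $\GM(J)=J$. So it suffices to show that a graded-simple Jordan triple system cannot coincide with its own McCrimmon radical. At this point I would import the ungraded structure theory cited for $\La=0$: by the local nilpotence of the McCrimmon radical (\cite{localnil}, \cite{KZ}), a system with $J=\M(J)$ is locally nilpotent, so the assumption $\GM(J)=J$ would make all of $J$ locally nilpotent.

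The hard part will be converting local nilpotence into a contradiction with graded-simplicity. The leverage is the hypothesis $P(J)J\ne 0$ built into graded-simplicity: as $P(J)J$ is a nonzero graded subset, the graded ideal it generates must equal $J$, so $J$ is ``graded-perfect''. I would then derive the contradiction by showing that a nonzero locally nilpotent system cannot be graded-perfect, extracting a proper nonzero graded ideal from the descending chain of graded derived ideals of $J$ (equivalently, ruling out elements that survive arbitrarily long iterated products). This incompatibility of local nilpotence with (graded-)simplicity is the real obstacle, and it is precisely the point that rests on the ungraded theory of \cite{localnil} and \cite{KZ} rather than on any formal manipulation; the grading itself enters only through the bookkeeping that $\GM(J)$ is a graded ideal and that the ideal generated by $P(J)J$ is graded. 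Once $\GM(J)=J$ is excluded, we are left with $\GM(J)=0$, and hence $J$ is graded-nondegenerate.
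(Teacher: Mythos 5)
Your proposal is correct and follows essentially the same route as the paper: reduce via graded-simplicity to the alternative $\GM(J)=J$, observe this forces $\M(J)=J$, invoke the local nilpotence results of \cite{localnil} and \cite{KZ}, and contradict graded-simplicity (the paper likewise leaves this last contradiction as "immediate"). No substantive difference.
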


\begin{proof}  We can suppose ${\GM}(J)=J$.
Hence ${\GM}(J)=\M (J)=J$. By \cite{localnil}, \cite{KZ} $J$ is then
{\it locally nilpotent}, i.e., every finitely generated subalgebra
of $J$ is nilpotent. But this immediately leads to a contradiction.
\end{proof}

Let $e$ be a tripotent in a Jordan triple system $J$, i.e.,
$P(e)e=e$. We thus have the Peirce decomposition of $J$ with respect
to $e$, written as $J=J_2(e)\oplus J_1(e)\oplus J_0(e)$.  If, in
addition, $J$ is  graded  and $e\in J^0$, it is immediate that that
the Peirce  spaces $J_i(e)$, $i=0,1,2$, are  graded:
 $J_i(e)=\bigoplus_{\lambda \in \Lambda}J_i^{\lambda}(e)$ where
$J_i^{\lambda}(e)=J_i(e)\cap J^\lambda$.

\begin{theorem}\label{grsiminh} Let $J$ be a graded-simple Jordan triple system
with a tripotent  $0\neq e\in J^0$. Then the Peirce subsystem
$J_2(e)$ is graded-simple and if $J_0(e)\neq 0$, then $J_0(e)$ is
also graded-simple.\end{theorem}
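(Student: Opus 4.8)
The plan is to deduce graded-simplicity of the Peirce subsystems from that of $J$ by Peirce calculus, using Proposition~\ref{nonde} to know that $J$ is graded-nondegenerate. Throughout I exploit that $e\in J^0$, so the Peirce spaces $J_i(e)$ are graded subsystems, and that the Peirce multiplication rules $\{J_i,J_j,J_l\}\subseteq J_{i-j+l}$ and $P(J_i)J_j\subseteq J_{2i-j}$ (with $J_m=0$ for $m\notin\{0,1,2\}$), together with the orthogonality relations $\{J_2,J_2,J_0\}=\{J_0,J_0,J_2\}=0$, respect both the Peirce grading and the $\La$-grading. The argument for $J_2(e)$ and for $J_0(e)$ is the same after exchanging the roles of the indices $2$ and $0$, so I describe it for $J_2(e)$.

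First, nontriviality. For $J_2(e)$ this is immediate, since $e=P(e)e\in P(J_2)J_2$, so $P(J_2(e))J_2(e)\ne 0$. For $J_0(e)$ I use nondegeneracy: the Peirce rules give $P(J_0)J_2=P(J_0)J_1=0$, hence $P(x)J=P(x)J_0$ for every $x\in J_0$. Thus if $P(J_0)J_0=0$, then every homogeneous $x\in J_0$ is a trivial element of $J$, forcing $J_0\subseteq\T^\La(J)=0$ by Proposition~\ref{nonde}. So $J_0(e)\ne 0$ already guarantees $P(J_0(e))J_0(e)\ne 0$, as required for graded-simplicity.

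The heart of the matter is to show that a nonzero graded ideal $B$ of $J_2(e)$ is all of $J_2(e)$. Since $B$ is graded and $e\in J^0$, the ideal $\hat B$ of $J$ generated by $B$ is again a graded ideal, and it is nonzero; graded-simplicity of $J$ therefore forces $\hat B=J$. Because the generators lie in $J_2$ and every triple product and $P$-operator is homogeneous for the Peirce grading, $\hat B$ is Peirce-graded, $\hat B=\hat B_2\oplus\hat B_1\oplus\hat B_0$ with $\hat B_i=\hat B\cap J_i$. If I can prove that $\hat B_2=B$, then $B=\hat B_2=J\cap J_2=J_2(e)$ and we are done.

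The main obstacle is exactly the identity $\hat B_2=B$. The strategy is to build $\hat B$ as the closure of $B$ under the ideal operations $\{J,J,-\}$, $\{J,-,J\}$, $P(J)(-)$, $P(-)J$, and to track the Peirce degree of each product. By the ideal property of $B$ in $J_2$, the channels $\{J_2,J_2,B\}$ and $P(J_2)B$ stay inside $B$, while the orthogonality relations kill $\{J_0,J_0,B\}$ and $\{J_2,J_2,J_0\}$-type terms; so the only products that re-enter $J_2$ and are not controlled outright are those routed through $J_1$, namely
\[ \{J_1,J_1,B\}\subseteq J_2,\qquad \{J_2,\hat B_1,J_1\}+\{J_1,\hat B_1,J_2\}\subseteq J_2,\qquad P(J_1)\big(\hat B\cap J_0\big)\subseteq J_2 . \]
Showing that each of these lands back in $B$ is the crux, and it is not a consequence of $B$ being an ideal of $J_2$ alone: one must use the five-linear identity and the fundamental formula to rewrite the offending products in terms of the operators $\{J_2,J_2,-\}$ and $P(J_2)$ that do preserve $B$, invoking graded-nondegeneracy (Proposition~\ref{nonde}) to discard the trivial remainders these rewritings produce. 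This is precisely the graded form of the classical Peirce-inheritance computation; since every identity used is homogeneous it transfers verbatim from the ungraded case, and completing this finite bookkeeping is the only substantial work.
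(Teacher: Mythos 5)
Your overall strategy coincides with the paper's: the proof given there for Theorem~\ref{grsiminh} is a one-line reduction to the ungraded result \cite[3.8]{MSimple}, and your outline is exactly the graded transcription of that argument --- pass to the ideal $\widehat{B}$ of $J$ generated by a nonzero graded ideal $B$ of $J_2(e)$, use graded-simplicity to conclude $\widehat{B}=J$, and then control the Peirce-$2$-component of $\widehat{B}$ by Peirce calculus. Your preliminary reductions are correct and in fact more explicit than anything the paper writes down: $P\bigl(J_2(e)\bigr)J_2(e)\ne 0$ because $e=P(e)e$, and $P\bigl(J_0(e)\bigr)J_0(e)\ne 0$ because $P(J_0)(J_1\oplus J_2)=0$ together with graded-nondegeneracy (Proposition~\ref{nonde}) would otherwise force $J_0(e)=0$. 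The observation that every identity involved is homogeneous for both the Peirce and the $\La$-gradings, so that the ungraded computation transfers, is also exactly the point the paper relies on.

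The proof is nevertheless incomplete, and the gap sits precisely where you place it. The step you call ``the crux'' --- that the products routed through $J_1(e)$, such as $\{J_1,J_1,B\}$ and $P(J_1)\bigl(P(J_1)B\bigr)$, return to $B$ --- is the entire content of \cite[3.8]{MSimple}; it is a genuine Peirce-ideal computation, not ``finite bookkeeping'' that can be waved at. Worse, the target you set yourself, $\widehat{B}\cap J_2(e)=B$, is stronger than what that computation delivers and stronger than what you need. The classical argument only produces a containment $\widehat{B}\cap J_2(e)\subseteq\widetilde{B}$ for an ideal $\widetilde{B}\supseteq B$ of $J_2(e)$ that is trivial modulo $B$ (for instance $P(\widetilde{B})\widetilde{B}\subseteq B$), the error terms of the rewriting being absorbed by nondegeneracy; one then concludes from $\widehat{B}=J$ that $\widetilde{B}=J_2(e)$, hence $e=P(e)e\in P\bigl(J_2(e)\bigr)J_2(e)\subseteq B$ and so $B=J_2(e)$. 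If you insist on establishing the exact equality $\widehat{B}\cap J_2(e)=B$ directly, the bookkeeping may simply not close. So: right skeleton, right auxiliary facts, but the central estimate is both unexecuted and aimed at a stronger statement than the one the method proves.
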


\begin{proof} This can be proven in the same way as the ungraded
result \cite[3.8]{MSimple}. \end{proof}

\section{Graded-triangulated Jordan triple systems} \label{sec:gr-triang
JTS general}

In this section we begin our study of graded-triangulated Jordan
triple systems. We define the basic notations used throughout,
present a list of multiplication rules, and discuss
graded-nondegeneracy and graded-simplicity (Prop.~\ref{peirce}).
Throughout $J$ is a Jordan triple system, assumed to be graded from
Def.~\ref{defgratri} on. \sm

A triple of  nonzero tripotents $(u; e_1, e_2)$ is called a {\it
triangle} if  $e_i \in J_0(e_j)$, $i\neq j$, $e_i \in J_2(u)$,
$i=1,2$, $u \in J_1(e_1)\cap J_1(e_2)$, and the following
multiplication rules hold: $P(u)e_i = e_j$, $i\ne j$, and  $P(e_1,
e_2)u = u$. In this case, $e:=e_1+e_2$ is a tripotent such that $e$
and $u$ have the same Peirce spaces. The verification that $(u; e_1,
e_2)$ is a triangle is simplified by the {\it Triangle Criterion\/}
\cite[I.2.5]{Nbook}, which says that {\it as soon as $u$ and $e_1$
are tripotents satisfying $u\in J_1(e_1)$ and $e_1\in J_2(u)$ then
$(u; e_1, P(u)e_1)$ is a triangle.}

A Jordan triple system with a triangle $(u; e_1, e_2)$ is said to be
{\it triangulated} if  $J=J_2(e_1)\oplus \big(J_1(e_1) \cap
J_1(e_2)\big) \oplus J_2(e_2)$ which is equivalent to $J=J_2(e)$. In
this case, we will use the notation $J_i=J_2(e_i)$ and
$M=J_1(e_1)\cap J_1(e_2)$.   Hence
$$J=J_1\oplus M\oplus
J_2.$$ For such a $J$ the index $i$ will always vary in $\{1,2\}$,
in which case $j\in \{1,2\}$ is given by $j=3-i$. An arbitrary
product $P(x)y$ in $J$  has the form
$P(x_1+m+x_2)(y_1+n+y_2)=z_1+r+z_2$, where
\begin{eqnarray*} z_i &=&
P(x_i)y_i+P(m)y_j+\{x_i, n, m\}, \hbox{ and}\\  r &=& P(m)n+\{x_1,
y_1, m\}+ \{x_2, y_2, m\}+\{x_1, n, x_2\}.
\end{eqnarray*}
Using Peirce multiplication rules and standard Jordan identities,
most of these products can be written in terms of the quadratic
operators
$$
       Q_i:  M  \rightarrow  J_i  : m  \mapsto  Q_i(m):=P(m)e_j,
$$
with linearizations $Q_i(m,n)=P(m,n)e_j$, the automorphism
$^-:J\rightarrow J$
$$x\mapsto \overline{x}:=P(e)x,\quad e=e_1+e_2,$$
and the bilinear maps $J_i\times M\rightarrow M$ defined by
\begin{formula}\label{001} $
J_i\times M\rightarrow M :(x_i , m)\mapsto x_i \cdot m
=L(x_i)m:=\{x_i, e_i, m\}. $\end{formula} \noindent Indeed we have
\cite[1.3.2-1.3.6]{MNSimple}:
\begin{formula} \label{01} $P(m)n=Q_i(m,\overline{n})\cdot m -Q_j(m)\cdot
\overline{n}$,
\end{formula}

\begin{formula} \label{02} $\{m,n,x_i\}=Q_i(m,x_i\cdot \overline{n})=\{m, \overline{x_i}\cdot n
,e_i\}$,
\end{formula}

\begin{formula} \label{03} $\{m,x_i, n\}=Q_j(m,\overline{x_i}\cdot n)=Q_j(n,\overline{x_i}\cdot
m)$,
\end{formula}

\begin{formula} \label{04} $\{x_i,y_i,m\}=x_i\cdot (\overline{y_i}\cdot
m)$,
\end{formula}

\begin{formula} \label{05}$\{x_i,m,y_j\}=x_i\cdot (y_j\cdot \overline{m})=y_j\cdot (x_i\cdot \overline{m}),$
\end{formula}
\noindent while $P(x_i)y_i\in J_i$ and $P(m)y_i\in J_j$ cannot be
reduced. Also \cite[1.3.7]{MNSimple}:

\begin{formula} \label{1} $e_i \cdot m=m  \,\,{\rm and}\,\, P(x_i)y_i \cdot m
= x_i\cdot \big(\overline{y_i} \cdot (x_i \cdot m)\big)$,
\end{formula}

Note that $^-$ has period 2 with $\overline{e_i}=e_i$, stabilizes
the Peirce subspaces $J_i$ and $M$ and reduces to $P(e_i)$ on $J_i$
and $P(e_1, e_2)$ on $M$.  We will also consider the {\it square\/}
of elements $x_i\in J_i$ defined as
\begin{formula}\label{eq:squares}
    $x_i^2:=P(x_i)e_i.$
\end{formula}

Because of \ref{1} we have $L(x_i)^2=L(x_i^2)$. We say that $J$ is
{\it faithfully triangulated\/} or that $u$ is {\it faithful\/} if
 any $x_1 \in J_1$ with $x_1\cdot u=0$ vanishes.  We will also
need the traces
$$T_i(m):=Q_i(u,m)=\{u \, e_j \, m\},$$ and the map $${}^*:=P(e)P(u)=P(u)P(e),$$ which
is an automorphism of $J$ of period 2 such that $u^* =u$, $e_i^*
=e_j$, and so $J_i^*=J_j$. The following lemma is shown in the proof
of \cite[1.15]{MNSimple} and will be used later.

\begin{lemma}\label{inheritance} Let $J=J_1\oplus M\oplus J_2$ be a
triangulated Jordan triple system. If $z=z_1+m+z_2 \in \T (J)$, the
set of trivial elements of $J$, then $z_i\in \T (J_i)$ and $m\in \R
Q_i$  for $i=1,2$ where $\R Q_i = \{ m\in M : Q_i(m) = 0 =
Q_i(m,M)\}$.   Conversely, if $m\in \R Q_1\cup \R Q_2$, then
$P(m)M=0=P(P(m)J_i)J$.
\end{lemma}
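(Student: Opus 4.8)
The plan is to read off both implications from the explicit form of a general product $P(x_1+m+x_2)(y_1+n+y_2)=z_1+r+z_2$ recorded just above the lemma, together with the reduction rules \ref{01}--\ref{05}, \ref{1} and the two period-two maps $x\mapsto\overline{x}=P(e)x$ and ${}^{*}=P(e)P(u)$. Throughout I write $z=z_1+m+z_2$ and test triviality against an arbitrary $y=y_1+n+y_2$.

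For the direct implication I would expand $0=P(z)y$ with the $x_i$ replaced by $z_i$, obtaining $P(z)y=w_1+s+w_2$, where $w_i=P(z_i)y_i+P(m)y_j+\{z_i,n,m\}$ lies in $J_i$ and $s=P(m)n+\{z_1,y_1,m\}+\{z_2,y_2,m\}+\{z_1,n,z_2\}$ lies in $M$. Each of $w_1,s,w_2$ must vanish for all choices of $y_1,y_2,n$, and within a fixed Peirce component the summands depend on disjoint subsets of the free variables, so I can switch off all but one and isolate each term. From $w_i$ this gives first $P(z_i)J_i=0$, i.e.\ $z_i\in\T(J_i)$, and then $P(m)J_j=0$; specialising the latter at $e_j$ yields $Q_i(m)=P(m)e_j=0$ for $i=1,2$. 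It then remains to promote the surviving mixed conditions to $Q_i(m,M)=0$. Here I would use the identity $Q_i(m,n)=\{m,e_j,n\}$ (the $x_i=e_i$ case of \ref{03}, via $e_i\cdot n=n$ from \ref{1}): condition $\{z_i,n,m\}=0$ rewrites by \ref{02} as $Q_i(m,L(z_i)M)=0$, and together with $P(m)(J_1\oplus J_2)=0$ and the maps $\overline{\phantom{x}},{}^{*}$ one must pass from these subspaces to all of $M$.

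For the converse I would reduce to $m\in\R Q_1$, the case $m\in\R Q_2$ following by applying ${}^{*}$, which fixes $M$, interchanges $e_1,e_2$ and $J_1,J_2$, and satisfies $(Q_i(m))^{*}=Q_j(m^{*})$, so that it carries $\R Q_1$ onto $\R Q_2$. Formula \ref{01} gives $P(m)n=Q_1(m,\overline{n})\cdot m-Q_2(m)\cdot\overline{n}$; the first term vanishes since $Q_1(m,M)=0$, so $P(m)M=0$ reduces to $Q_2(m)\cdot M=0$, which I would extract from the defining relations of $\R Q_1$ by the identity tying $Q_2$ to $Q_1$ through $e_1=Q_1(u)=P(u)e_2$. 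Once $P(m)M=0$ is known, $P(m)J\subseteq J_1\oplus J_2$, and the fundamental formula $P(P(m)x)=P(m)P(x)P(m)$ combined with the Peirce relations $P(J_2(e_i))J_0(e_i)=0$ (using $J_j\subseteq J_0(e_i)$) collapses $P(P(m)J_i)J$ to zero.

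The step I expect to be the main obstacle is the last part of the direct implication: the raw consequences of $P(z)=0$ only control $Q_i(m,\cdot)$ on the subspaces $L(z_i)M$ and pin down $P(m)$ on the diagonal $J_1\oplus J_2$, and upgrading these to the full radical conditions $Q_i(m,M)=0$ for both $i$ is where essentially all the identity bookkeeping (and the interplay of $\overline{\phantom{x}}$ with ${}^{*}$) is concentrated. The converse, by contrast, is a comparatively mechanical application of \ref{01} and the fundamental formula.
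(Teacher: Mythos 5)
First, a remark on the comparison itself: the paper does not prove this lemma at all --- it simply records that the statement ``is shown in the proof of \cite[1.15]{MNSimple}'' --- so there is no in-paper argument to measure yours against, and your reconstruction has to stand on its own. Your setup (expand $P(z)y$ componentwise, decouple the variables $y_1,n,y_2$, feed the surviving terms into \ref{01}--\ref{05}) is the right one, and the conclusions $z_i\in\T(J_i)$, $Q_i(m)=P(m)e_j=0$ and $Q_i(m,L(z_i)M)=0$ are obtained correctly. But the step you yourself flag as ``the main obstacle'' is a genuine gap, and it is not closable by more identity bookkeeping along your route. The complete list of conditions extractable from $P(z)J=0$ is: $P(z_i)J_i=0$, $P(m)(J_1\oplus J_2)=0$, $L(z_i)L(J_i)m=0$, $Q_i(m,L(z_i)M)=0$ and $P(m)n=-\{z_1,n,z_2\}$; these do \emph{not} force $Q_i(m,M)=0$. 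Concretely, in $\rmH_2(A,A,\Id,\Id)$ with $A=k[\epsilon]$, $\epsilon^2=0$, the element $z=m=\epsilon(E_{12}+E_{21})$ satisfies $P(z)J=0$ (every entry of $z\,Y\,z$ is a multiple of $\epsilon^2$), so it meets every condition on your list, yet $Q_1(m,u)=\{m,e_2,u\}=2\epsilon E_{11}\neq 0$. What one can actually extract at this level of generality --- and what the later application in Prop.~\ref{peirce} needs, since there $J_1$ is assumed graded-nondegenerate --- is that the elements $Q_i(m,n)$ are \emph{trivial} in $J_i$; the passage to $Q_i(m,M)=0$ uses that extra input. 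So the forward implication cannot be completed as proposed, and the missing mechanism has to be taken from \cite[1.15]{MNSimple}.

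The converse half also leaves two real steps unproved. To kill $P(m)n=Q_1(m,\overline n)\cdot m-Q_2(m)\cdot\overline n$ for $m\in\R Q_1$ you need $Q_2(m)=0$, which does follow, but via the chain $T_1(m)=Q_1(u,m)=0$, hence $m^*=T_1(m)\cdot u-m=-m$ by \ref{5}, hence $Q_2(m)=Q_2(-m^*)=Q_1(m)^*=0$ by \ref{4}; the identity you invoke ($e_1=Q_1(u)$) is not the one that does this work. More seriously, $P(P(m)x_i)J$ does not ``collapse'' from the fundamental formula plus the Peirce relation $P(J_2(e_i))J_0(e_i)=0$: writing $P(P(m)x_i)y=P(m)P(x_i)P(m)y$ and using $P(m)M=0$ still leaves the term $P(m)P(x_i)P(m)y_j$, which lands in $J_j$ and is not removed by any Peirce rule --- recall the paper's own warning that $P(m)y_i\in J_j$ ``cannot be reduced.'' An actual identity argument is required there as well. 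In short: right strategy, honest accounting, but both directions are missing their decisive steps, and the forward one cannot be supplied by the means you have set up.
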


\begin{definition} \label{quad} If  $M=\bigoplus_{\lambda \in \Lambda}M^\lambda$ and
$N=\bigoplus_{\lambda \in \Lambda}N^\lambda$ are  graded modules and
$Q:M\rightarrow N$ is a quadratic map, we  call $Q$ {\it graded} if
$Q(M^\la) \subseteq N^{2\la}$ and $Q(M^\la, M^\nu) \subseteq N^{\la
+ \nu}$, where $Q(.,.)$ is the bilinear form associated to $Q$.
Recall that the {\it radical} of $Q$ is $\R Q = \{ m\in M : Q(m) = 0
= Q(m,M)\}$. In our situation the {\it graded-radical} of $Q$,
defined as
$${\GR}Q:=\textstyle \bigoplus_{\lambda \in \Lambda}\; \{m \in M^\lambda:
Q(m)=0=Q(m, M)\},$$ will be more important. Naturally, we say that
$Q$ is {\it graded-nondegenerate\/} if ${\GR}Q=0$. It is easily seen
that the submodule $\{ m\in M : Q(m,M)=0\}$ is graded. For any
$m=\sum_{\lambda \in \Lambda}m^\lambda \in M$ satisfying $Q(m,M)=0$,
we have $Q(m)=\sum_{\lambda \in \Lambda} Q(m^\lambda)$, where
$Q(m^\lambda)\in N^{2\la}$. Hence ${\GR}Q =\R Q$ if $\frac{1}{2} \in
k$ or $\La$ does not have $2$-torsion. In general, ${\GR}Q$ is the
greatest graded submodule of $\R Q$.
\end{definition}

\begin{definition} \label{defgratri} A Jordan triple system $J$ is said to be {\it
graded-triangulated} if $J$ is graded by some abelian group $\La$
and triangulated by $(u; e_1, e_2) \subseteq J^0$. We call a
graded-triangulated $J$ {\it $\La$-triangulated} if $\supp_\La J =
\{ \la \in \La : J_i^\la \ne 0 \hbox{ or } M^\la \ne 0\}$ generates
$\La$ as abelian group.   We call $J$ {\it
graded-simple-triangulated} if $J$ is graded-simple and
graded-triangulated.

Let $J$ be a graded-triangulated Jordan triple system. The grading
group of a graded-triangulated Jordan triple system will usually be
denoted by $\La$. Observe that the $\La$-grading is {\it
compatible\/} with the Peirce decomposition: The subsystems $J_i$
and $M$ are  graded:   $J_i=\bigoplus_{\lambda \in
\Lambda}J_i^\lambda$ and $M=\bigoplus_{\lambda \in
\Lambda}M^\lambda$, the quadratic operators $Q_i$ and the
automorphisms ${}^*$ and $^-$ are graded. \end{definition}

As before we let $(u; e_1,e_2)$ be the triangle inducing the
triangulation. The product formulas \ref{01}-\ref{05} show that a
graded linear subspace $K=K_1\oplus N\oplus K_2$ with $K_i\subseteq
J_i$, $N\subseteq M$, is a graded subsystem if
\begin{formula}\label{sub}$\overline{K_i}=K_i,\,\overline{N}=N,\,
P(K_i)K_i\subseteq K_i,\, P(N)K_i\subseteq K_j\, \hbox{ and }
K_i\cdot N\subseteq N$.\end{formula}

As in \cite{MNSimple}, we denote by $C$ the subalgebra of End$_k(M)$
generated by
$$C_0=L(J_1)=\textstyle \bigoplus_{\la \in \La}L(J_1^\la),$$ and we
say that $u$ is {\it $C$-faithful} if $c u=0$ implies $c=0$.  It is
easily seen that
$\End^\La_k(M) :=\bigoplus_{\la \in \La}\End_k^\la(M)$, 
where $\End_k^\la(M)=\{\varphi\in \,\End_k(M):
\varphi(M^\ga)\subseteq M^{\ga+\la} \text{ for all } \ga \in \La\}$,
is a subalgebra of $\End_k M$ which is  graded by $\La$. Note that
$L(J_i^\la) \in$ End$_k^\la(M)$. Hence $C_0$ is a graded submodule,
and this implies that $C$ is a graded   subalgebra of
$\End^\La_k(M)$, i.e., $C=\bigoplus_{\la \in \La}C^\la$ where
$C^\la=C\cap \End_k^\la(M)$.   We have that
$\overline{L(x_1)}:=L(\overline{x_1})=P(e)L(x_1)P(e)\in C_0$.
Therefore $c\mapsto \overline{c}=P(e)cP(e)|_M$ defines an
automorphism on $C$ of period $2$, which is graded. Moreover, $L:
J_1 \to C_0$, $x_1\mapsto L(x_1)$ is a nonzero graded specialization
with respect to $P(c_0)d_0=c_0\overline{d_0}c_0\in C_0$, for $c_0,
d_0 \in C_0$. By \cite[1.6.6]{MNSimple}, $C$ has a {\it reversal
involution\/} $\pi$, i.e.,
\begin{formula}\label{pi}
$\big(L(x_1) \cdots L(x_n)\big)^\pi=L(x_n) \cdots
L(x_1).$\end{formula}

\noindent It easily follows {f}rom \ref{03} that
\begin{formula}\label{pi2} $Q_2(cm,n)=Q_2(m,c^\pi  n)$.\end{formula}

\noindent It is clear {f}rom \ref{pi} that $\pi$ is homogeneous of
degree $0$ and
 commutes with the automorphism $^-$ of $C$. Moreover,
 $C_0=\overline{C_0}\subseteq H(C,\pi )$ is an {\it ample subspace} of
$(C,\pi)$, i.e., $1\in C_0$ and $cC_0c^\pi\subseteq C_0$ for all
$c\in C$. Indeed, for $c\in C$, $x_1\in J_1$, we have by
\cite[1.6.9]{MNSimple}

\begin{formula} \label{2} $cL(x_1)c^\pi =L(P(cu)P(u)x_1)$. \end{formula}

\noindent Besides the formulas already mentioned we will use the
following identities proven in \cite[1.6.8, 1.6.11, 1.6.2, 1.6.3,
1.6.10, 1.6.12, 1.6.14]{MNSimple}. For $c\in C$, $m\in M$ and
$x_i\in J_i$ we have

\begin{formula} \label{3} $c+c^\pi =L\big(T_1(cu)\big)$,
\end{formula}
\begin{formula} \label{3.5} $Q_2(cu,m)=T_2(c^\pi m)$,
\end{formula}
\begin{formula}\label{4} $T_i(m)^*=T_j(m^*)=T_j(m)$ and $Q_i(m)^*=Q_j(m^*)$, \end{formula}
\begin{formula} \label{5} $m^*=T_i(m)\cdot u -m$, \end{formula}
\begin{formula}\label{6} $(cu)^* =c^* u=c^{\pi} u$  {\rm and
hence} $cc^*u=c^*cu$, {\rm where $c^*=P(u)cP(u)$.

\noindent Note that $C^*$ is the subalgebra of End$_k(M)$ generated
by $L(J_2)$.}\end{formula}
\begin{formula}\label{7} $(x_i^* -x_i) \cdot m=\big(T_i(m)\cdot x_i -T_i(x_i \cdot m)\big)\cdot u
= -\Gamma_i (x_i;m) u,$\end{formula} \noindent where $\Gamma_i
(x_i;m):=L\big(T_i(x_i\cdot m)\big)-L\big(T_i(m)\big)L(x_i)$.
Observe that $\Gamma_i (x_i;m)$ is linear in the two variables.
\begin{formula}\label{8}
\begin{multline*}
\Gamma_1 (x_1;m)\Gamma_1 (x_1;m)^\pi m  =
L\big(Q_2(m)\big)[L(x_1),\Gamma_1(x_1;m)]u \\
+  L(x_1)[L\big(Q_1(m)\big), L(x_1)]m   +   [L(x_1),
L(P(m)P(u)x_1)]m \in Cu.
\end{multline*}
\end{formula} \sm

The following proposition is a straightforward generalization of the
corresponding result for $\La=0$
 \cite[1.15]{MNSimple}. Its proof, which uses Prop.~\ref{nonde}, Th.~\ref{grsiminh} and Lem.~\ref{inheritance}, will be left to the reader.

\begin{proposition}\label{peirce} Let $J$ be a graded-triangulated
Jordan triple system. Then
\begin{enumerate}
 \item[{\rm (i)}] $J$ is graded-nondegenerate iff $J_1$ and
 $Q_1$ are graded-nondegenerate, iff $J_2$ and $Q_2$
are graded-nondegenerate. In this case, $J$ is faithfully
triangulated.
\item[{\rm (ii)}] $J$ is graded-simple iff $J_1$ is graded-simple
and $Q_1$ is graded-nondegene\-ra\-te, iff $J_2$ is graded-simple
and $Q_2$ is graded-nondegenerate.
\end{enumerate} \end{proposition}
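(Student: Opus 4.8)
My plan is to reduce everything to the behaviour of \emph{homogeneous} elements under the Peirce decomposition $J=J_1\oplus M\oplus J_2$ relative to $e_1,e_2\in J^0$, and to lean on the graded automorphism ${}^*=P(e)P(u)$, which interchanges $J_1\leftrightarrow J_2$, fixes $u$, and satisfies $Q_i(m)^*=Q_j(m^*)$ by \ref{4}. Since ${}^*$ carries graded ideals to graded ideals, homogeneous trivial elements to homogeneous trivial elements, and $J_1$ onto $J_2$ as graded triple systems, it at once gives the equivalence of the ``$i=1$'' and ``$i=2$'' formulations in both (i) and (ii); in particular $J_1$ is graded-nondegenerate iff $J_2$ is, and $Q_1$ is graded-nondegenerate iff $Q_2$ is. So it suffices to argue the $J_1/Q_1$ version throughout.

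The backbone of (i) is a graded refinement of Lem.~\ref{inheritance}: because $e_1\in J^0$ the Peirce projections onto $J_1=J_2(e_1)$, $M=J_1(e_1)$, $J_2=J_0(e_1)$ are homogeneous of degree $0$, so a homogeneous trivial element $z=z_1+m+z_2\in\T^\lambda(J)$ has all components homogeneous of degree $\lambda$, and Lem.~\ref{inheritance} yields $z_i\in\T^\lambda(J_i)$ and $m\in(\R Q_1\cap\R Q_2)\cap M^\lambda\subseteq\GR Q_1$. For a single homogeneous $z_1\in J_1$, the Peirce relations $P(J_2(e_1))J_1(e_1)=0=P(J_2(e_1))J_0(e_1)$ force $P(z_1)M=0=P(z_1)J_2$, so $\T^\lambda(J_1)=\T^\lambda(J)\cap J_1$, giving the implication ``$J$ graded-nondegenerate $\Rightarrow$ $J_1$ graded-nondegenerate''. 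For $Q_1$ I take homogeneous $m\in\GR Q_1$ and use the converse half of Lem.~\ref{inheritance} ($P(m)M=0$, $P(P(m)J_1)J=0$) to see that $P(m)J_1\subseteq J_2$ consists of homogeneous trivial elements, hence vanishes; then $P(m)J\subseteq J_1$, and the fundamental formula $P(P(m)x_2)=P(m)P(x_2)P(m)$ together with $P(J_2(e_2))J_0(e_2)=0$ shows $P(m)J_2$ is trivial too, so $m\in\T^\Lambda(J)=0$. Conversely, $J_1$ and $Q_1$ graded-nondegenerate force $m=0$ (from $\GR Q_1=0$) and $z_1=z_2=0$ (graded-nondegeneracy of $J_1$, hence of $J_2$ via ${}^*$) in the inheritance, so $J$ is graded-nondegenerate. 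For faithfulness I reduce to homogeneous $x_1$ with $x_1\cdot u=0$; then \ref{3} gives $L(x_1)+L(x_1)^\pi=L\big(T_1(x_1\cdot u)\big)=0$, and since $L(x_1)^\pi=L(x_1)$ and $\tfrac12\in k$ we get $L(x_1)=0$, after which \ref{1}, \ref{03}, \ref{04} exhibit $x_1$ as a homogeneous trivial element of $J_1$, so $x_1=0$ as in \cite[1.15]{MNSimple}.

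For (ii) the forward direction is short: by Prop.~\ref{nonde} a graded-simple $J$ is graded-nondegenerate, so $Q_1$ is graded-nondegenerate by (i), while $J_1=J_2(e_1)$ is graded-simple by Th.~\ref{grsiminh} applied to the degree-$0$ tripotent $e_1$. The converse is the heart. Assuming $J_1$ graded-simple and $Q_1$ graded-nondegenerate, first $P(J)J\supseteq P(J_1)J_1\ne0$. Let $0\ne I$ be a graded ideal; stability under $L(e_1,e_1)$ makes it $e_1$-Peirce-compatible, $I=(I\cap J_1)\oplus(I\cap M)\oplus(I\cap J_2)$, and $I\cap J_1$ is a graded ideal of the graded-simple $J_1$, so $0$ or $J_1$. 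If $I\cap J_1=J_1$ then $e_1\in I$, and $P(u)e_1=e_2$, $P(e_1,e_2)u=u$, and the Peirce eigenvalue identities (with $\tfrac12\in k$) propagate $J_2,M\subseteq I$, giving $I=J$. If $I\cap J_1=0$ I show $I=0$: for homogeneous $m\in I\cap M$ one has $Q_1(m),\,Q_1(m,M)\in I\cap J_1=0$, so $m\in\GR Q_1=0$; and for homogeneous $a\in I\cap J_2$ one has $P(u)a\in I\cap J_1=0$, whence $a^*=P(e)P(u)a=0$ and so $a=0$ since ${}^*$ is bijective. Thus every nonzero graded ideal equals $J$.

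The step I expect to require the most care is exactly this exclusion of the case $I\cap J_1=0$ for a nonzero graded ideal: it is where \emph{both} hypotheses are indispensable, since graded-nondegeneracy of $Q_1$ is what kills $I\cap M$ while the automorphism ${}^*$ (trading $J_1$ for $J_2$) is what kills $I\cap J_2$. A secondary technical point, benign here, is the distinction between $\R Q_i$ and $\GR Q_i$ under $2$-torsion (Def.~\ref{quad}); because every radical element produced above is homogeneous, it automatically lies in $\GR Q_i$, so no separate argument is needed.
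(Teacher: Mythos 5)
Your route is exactly the one the paper intends: the paper gives no proof of Prop.~\ref{peirce}, saying only that it is a straightforward generalization of \cite[1.15]{MNSimple} whose proof ``uses Prop.~\ref{nonde}, Th.~\ref{grsiminh} and Lem.~\ref{inheritance}'', and these are precisely the three ingredients you deploy: homogeneity of the Peirce projections plus Lem.~\ref{inheritance} (and the observation that the homogeneous radical elements produced automatically lie in $\GR Q_i$ rather than merely $\R Q_i$) for (i); Prop.~\ref{nonde} and Th.~\ref{grsiminh} for the forward direction of (ii); and the Peirce-compatibility of graded ideals, graded-nondegeneracy of $Q_1$, and bijectivity of $P(u)$ (equivalently of ${}^*$) for the converse of (ii). All of that is correct as written; note only that the $\tfrac12\in k$ you invoke when propagating $e_1\in I$ to $M\subseteq I$ is not needed, since $m=\{e_1,e_1,m\}$ for $m\in J_1(e_1)$ and $x_2=P(e_2)P(e_2)x_2$ for $x_2\in J_2(e_2)$ hold in the quadratic setting.

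The one step that does not go through as written is the faithfulness claim in (i). The standing hypothesis in \S\ref{sec:grJTS}--\S\ref{sec:triangjp} is an arbitrary ring of scalars (the assumption $\tfrac12,\tfrac13\in k$ is imposed only in \S\ref{b2lie}; compare Def.~\ref{quad}, which explicitly allows $\tfrac12\notin k$), so you may not divide by $2$: from \ref{3} and $L(x_1)^\pi=L(x_1)$ you obtain only $2L(x_1)=0$. Moreover, even granting $L(x_1)=0$, the identities \ref{1}, \ref{03}, \ref{04} control only products that pass through $M$: \ref{1} yields $L\bigl(P(x_1)y_1\bigr)=0$, i.e.\ $P(x_1)J_1\subseteq\ker L$, not $P(x_1)J_1=0$, so $x_1$ has not yet been exhibited as a trivial element of $J_1$ (the Peirce rules give $P(x_1)M=0=P(x_1)J_2$ for free, but $P(x_1)J_1$ is exactly the part these formulas do not reach). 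Closing this requires the actual argument of \cite[1.15]{MNSimple} --- for instance exploiting \ref{2} in the form $L(x_1)L(y_1)L(x_1)^\pi=L\bigl(P(x_1\cdot u)P(u)y_1\bigr)=0$ together with graded-nondegeneracy of $Q_2$ applied to the homogeneous elements $x_1\cdot m$ --- rather than the shortcut via $\tfrac12$. Since this gap is confined to the faithfulness sub-claim, and both you and the paper ultimately defer to \cite[1.15]{MNSimple} at that point, the remainder of your proof stands.
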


\begin{definition} A  graded-triangulated Jordan triple system $J$ is
called {\it division-triangulated} if the Jordan triple systems
$J_i$, $i=1,2$, are division-graded and if every homogeneous $0\neq
m\in M$ is invertible in $M$, equivalently in $J$. We call $J$ {\it
division-$\La$-triangulated\/} if $J$ is division-triangulated and
$\La$-triangulated. Thus, $\supp_\La J = \{ \la \in \La : J_i^\la
\ne 0 \hbox{ or } M^\la \ne 0\}$ generates $\La$ as abelian group.

Any division-triangulated Jordan triple system is in particular
graded-simple. Since $e_1+e_2$ is invertible, an off-diagonal
element $m\in M$ is invertible iff $P(m)(e_1 + e_2) = Q_1(m) \oplus
Q_2(m)$ is invertible in $J$ which is equivalent to both $Q_i(m)$
being invertible in $J_i$.

Let $k$ be a field. A {\it triangulated Jordan triple torus} is a
division-triangu\-la\-ted Jordan triple system $J=J_1 \oplus M\oplus
J_2$ for which $\dim_k J^\la_i \le 1 $ and $\dim_k M^\la \le 1$ for
all $\la \in \La$. We call such a Jordan triple system a {\it
$\La$-triangulated Jordan triple torus\/} if $J$ is
division-$\La$-triangulated.

We will use the same approach to define ``tori'' and $\La$-tori in
other categories: associative algebras (\ref{asstordef}), Jordan
algebras (\ref{jordalgtor}), Jordan pairs (\ref{jptordef}) and Lie
algebras (\ref{lietordef}), and we will see in \S\ref{b2lie} the
connection between them:  $\La$-triangulated Jordan structures
coordinatize $\rmb_2$-Lie tori, which is our principal motivation
for studying them.

\end{definition}

For the next lemma we recall that a subset $S\subset \La$ is called
a {\it pointed reflection subspace\/} if $0\in S$ and $2S-S\subset
S$, see for example \cite[2.1]{NY}, where it is also shown that any
pointed reflection subspace is a union of cosets modulo $2\ZZ[S]$,
including the trivial coset $2\ZZ[S]$. Here $\ZZ[S]$ denotes the
$\ZZ$-span of $S$. In particular, a pointed reflection subspace is
in general not a subgroup.

\begin{lemma} \label{supplem}
Let $J$ be a division-$\La$-triangulated Jordan triple system . Put
$$
      \scL= \supp_\La J_1 = \supp_\La J_2 \quad\text{and} \quad
       \scS= \supp_\La M.
$$
Then $\scL$ and $\scS$ are pointed reflection subspaces of $\La$
satisfying \begin{equation} \scL + 2 \scS \subset \scL
\quad\text{and}\quad \scL + \scS \subset \scS. \label{eq:supplem1}
\end{equation}
\end{lemma}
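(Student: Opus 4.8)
$\emph{Proof plan.}$ The goal is to show that $\scL=\supp_\La J_1$ and $\scS=\supp_\La M$ are pointed reflection subspaces satisfying the two inclusions in \eqref{eq:supplem1}. The strategy is to exploit the fact that in a division-$\La$-triangulated system, invertible homogeneous elements exist in every nonzero homogeneous Peirce space, and that the quadratic operators $P$, $Q_i$, and the products $x_i\cdot m$ all respect the grading. First I would verify that $0\in\scL$ and $0\in\scS$: the tripotents $e_i\in J_i^0$ are nonzero, so $0\in\scL$, and since $u\in M^0$ is a nonzero homogeneous element (part of the triangle $(u;e_1,e_2)\subseteq J^0$), we get $0\in\scS$ as well. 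This disposes of the base-point condition for both sets.

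$\emph{The reflection-subspace conditions.}$ To show $2\scL-\scL\subseteq\scL$, I would take $\la,\mu\in\scL$ and pick a nonzero homogeneous $x_1\in J_1^\la$ and a nonzero homogeneous $y_1\in J_1^\mu$. Since $J_1$ is division-graded, $y_1$ is invertible in $J_1$, so its inverse $y_1^{-1}$ is homogeneous of degree $-\mu$, whence $-\mu\in\scL$. The product $P(x_1)y_1^{-1}\in J_1^{2\la-\mu}$ is nonzero because $P(x_1)$ is injective on homogeneous elements when $x_1$ is invertible (division-graded again), so $2\la-\mu\in\scL$; thus $2\scL-\scL\subseteq\scL$. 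The same argument in $M$ gives $2\scS-\scS\subseteq\scS$: for $m\in M^\la$, $n\in M^\mu$ nonzero and homogeneous, $n$ is invertible in $M$ (equivalently in $J$) with homogeneous inverse of degree $-\mu$, and $P(m)n^{-1}\in M^{2\la-\mu}$ is nonzero, so $2\scS-\scS\subseteq\scS$. Here I would invoke the remark in the definition of division-triangulated that invertibility of $m\in M$ is equivalent to invertibility of both $Q_i(m)$ in $J_i$, which lets me move between $M$ and $J_i$ freely.

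$\emph{The mixed inclusions.}$ For $\scL+2\scS\subseteq\scL$, take $\la\in\scL$ and $\mu\in\scS$ with nonzero homogeneous $x_i\in J_i^\la$ and $m\in M^\mu$. The natural element to use is $Q_i(m)\in J_i^{2\mu}$, which is nonzero and invertible since $m$ is invertible in $M$; forming $P\big(Q_i(m)\big)^{\pm}$ acting on $x_i$, or more directly using the product formula \ref{1}, namely $P(x_i)y_i\cdot m=x_i\cdot(\overline{y_i}\cdot(x_i\cdot m))$, together with the squaring map \ref{eq:squares}, I would produce a nonzero homogeneous element of $J_i$ in degree $\la+2\mu$. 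The cleanest route is to note that $Q_i(m)$ is an invertible element of $J_i^{2\mu}$, so $2\mu\in\scL$, and then the already-established $\scL+2\scL\supseteq$ reflection structure — or simply $P\big(x_i\big)$ applied to the inverse of $Q_i(m)$ — yields degree $\la+2\mu\in\scL$. For $\scL+\scS\subseteq\scS$, I would use the action $J_i\times M\to M$ from \ref{001}: for invertible $x_i\in J_i^\la$ and $m\in M^\mu$, the element $x_i\cdot m\in M^{\la+\mu}$, and I must check it is nonzero. Nonvanishing follows from $\ref{1}$: since $e_i\cdot m=m$ and $L(x_i)$ has an inverse $L(x_i^{-1})$ (because $L(x_i)L(x_i^{-1})$ relates to $L$ of an invertible diagonal element via \ref{1} and $L(x_i)^2=L(x_i^2)$), the map $L(x_i)$ is injective on homogeneous elements, so $x_i\cdot m\neq0$, giving $\la+\mu\in\scS$.

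$\emph{Main obstacle.}$ The delicate point is establishing that the relevant operators are genuinely \emph{injective} on the homogeneous components, which is what converts the degree-tracking into the desired support inclusions. For $P(x_i)$ and $Q_i(m)$ this is immediate from invertibility in the respective division-graded systems, but for the one-sided multiplication $L(x_i)\colon M\to M$ the injectivity is less automatic and must be extracted from \ref{1} — one shows $L(x_i)$ is invertible by producing a two-sided inverse built from $L(x_i^{-1})$, using $L(x_i)^2=L(x_i^2)$ and the relation $P(x_i)y_i\cdot m=x_i\cdot(\overline{y_i}\cdot(x_i\cdot m))$ to verify that $L(x_i^{-1})$ inverts $L(x_i)$. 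Once this injectivity is in hand, every inclusion reduces to reading off the degree of a manifestly nonzero homogeneous product, and the reflection-subspace axioms $2S-S\subseteq S$ drop out of the division-graded invertibility of diagonal and off-diagonal elements.
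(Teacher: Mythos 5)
Your treatment of pointedness, of the reflection-subspace property of $\scL$ and $\scS$, and of the inclusion $\scL+\scS\subset\scS$ is essentially the paper's proof: the paper likewise reduces everything to $(y^\mu)^{-1}\in J^{-\mu}$, $P(x^\la)(y^\mu)^{-1}\in J^{2\la-\mu}$, and the invertibility of $L(x_1^\la)$ on $M$ (your sketch of why $L(x_1)$ is invertible, via $L\big(P(x_1)y_1\big)=L(x_1)L(\overline{y_1})L(x_1)$ with $y_1=P(x_1)^{-1}e_1$, is the right idea). You do omit the verification that $\supp_\La J_1=\supp_\La J_2$, which the paper gets by applying the invertible operator $P(u)$, but that is minor.

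There is, however, a genuine gap in your argument for $\scL+2\scS\subset\scL$. None of the operations you propose actually lands in degree $\la+2\mu$: $P(x_i^\la)$ applied to $Q_i(m)^{-1}\in J_i^{-2\mu}$ has degree $2\la-2\mu$; $P\big(Q_i(m)\big)^{\pm1}$ applied to $x_i^{\pm1}$ gives degrees $\pm\la\pm4\mu$; and formula \ref{1} produces elements of $M$, not of $J_i$. The fallback ``$2\mu\in\scL$ plus the reflection structure'' also fails: a pointed reflection subspace satisfies $2\scL-\scL\subset\scL$ but is in general \emph{not} closed under addition --- it is only a union of cosets modulo $2\ZZ[\scL]$, and $\la+2\mu$ typically lies in a coset different from those of $\la$ and $2\mu$, so knowing $\la,2\mu\in\scL$ does not force $\la+2\mu\in\scL$. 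The ingredient you are missing is the Peirce rule $P(M)J_1\subset J_2$ applied to a general $x_1$, not just to $e_2$: since $m\in M^\mu$ is invertible, $P(m)$ is bijective, hence $0\ne P(m^\mu)x_1^\la\in J_2^{2\mu+\la}$ and therefore $\la+2\mu\in\supp_\La J_2=\scL$. This is exactly how the paper concludes; your $Q_i(m)=P(m)e_j$ is the special case $\la=0$ and only yields the weaker statement $2\scS\subset\scL$.
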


\begin{proof} We have $\supp_\La J_1 = \supp_\La J_2$ by applying
the invertible operator $P(u)$ to $J_i$. That $\scL$ and $\scS$ are
pointed reflection spaces is a general fact which is true for any
division-graded Jordan triple system $J= \bigoplus_{\la \in \La}
J^\la$ with $J^0\ne 0$: We have, with obvious notation,
$(y^\mu)^{-1}\in J^{-\mu}$, whence $P(x^\la)(y^\mu)^{-1} \in J^{2\la
- \mu}$. The formulas in (\ref{eq:supplem1}) follow from $P(m^\la)
x_1^\mu \in J_2^{2\la +\mu}$ and invertibility of $L(x_1^\la)$ on
$M$. \end{proof}

\begin{remark} The relations (\ref{eq:supplem1}) are well-known from
the theory of extended affine root systems of type $\rmb_2$
(\cite[II]{aabgp}), or more generally, the theory of extension data
for affine reflection systems (\cite{prs}). This is of course no
accident in view of the connections between triangulated Jordan
structures and $\rmb_2$-graded Lie algebras, explained in
\S\ref{b2lie}. \end{remark}

\section{Hermitian matrix systems} \label{sec:trhms}

In this section we introduce the first of the two basic models for
our paper, the hermitian matrix system (Def.~\ref{examplehermi}),
and we characterize them within the class of all triangulated Jordan
triple systems in Th.~\ref{hermitian}. We then describe the graded
ideals of a  hermitian matrix system (Prop.~\ref{relideals}), which
allows us to describe the graded-(semi)prime and graded-simple
hermitian matrix systems (Cor.~\ref{hermitiansimpl} and
Prop.~\ref{hermconclusion}). Finally, in Lem.~\ref{hermtor} we
describe the division-triangulated  and tori among the hermitian
matrix systems. \sm

\begin{definition} \label{examplehermi} {\it Hermitian matrix
systems $\rmH_2(A,A_0 ,\pi , {}^-)$.}  Recall \cite[\S2]{MNSimple}
that an (associative) coordinate system $(A,A_0 ,\pi , {}^-)$
consists of a unital associative $k$-algebra $A$ with involution
$\pi$ and an automorphism $^-$ of period $2$ commuting with $\pi$,
together with a $^-$ stable {\it $\pi$-ample} subspace $A_0$, i.e.,
$\overline{A_0}=A_0 \subseteq \rmH(A,\pi)$, $1\in A_0$ and
$aa_0a^\pi \subseteq A_0$ for all $a\in A$ and $a_0\in A_0$. We will
call such a coordinate system  {\it graded} by $\La$ if
$A=\bigoplus_{\la \in \La}A^\la$ is graded, $\pi$ and $^-$ are
homogeneous of degree $0$ and $A_0$ is a graded submodule:
$A_0=\bigoplus_{\la \in \La}A_0^\la$ for $A_0^\la=A_0\cap A^\la$.

To a  graded  coordinate system $(A,A_0,\pi, ^-)$ we associate the
{\it  hermitian matrix system} $\rmH=\rmH_2(A,A_0,\pi , ^-)$ which
by definition is the Jordan triple system of  $2\times 2$-matrices
over $A$ which are hermitian $(X=X^{\pi t})$ and have diagonal
entries in $A_0$, with triple product $P(X)Y=X\overline{Y}^{\pi
t}X=X\overline{Y}X$, $t={\rm transpose}$. The Jordan triple system
$\rmH_2(A,A_0,\pi , ^-)$ is spanned by elements
$$a_0^\la [ii]=a_0^\la E_{ii} \quad\hbox{ and } \quad
a^\ga [12]=a^\ga E_{12}+(a^\ga)^\pi E_{21}=(a^\ga)^\pi[21],$$
$a^\ga\in A^\ga, a_0^\la\in A_0^\la$. Such a system is  graded by
$\La$: $\rmH=\bigoplus_{\la\in \La}\rmH^\la$ where $H^\la ={\rm
span}\{a_0^\la [ii], a^\la [12]:\, a_0^\la\in A_0^\la , a^\la\in
A^\la \}$, and graded-triangulated by ($u=1[12]$; $e_1=1[11]$,
$e_2=1[22])\in \rmH^0$.  Note that the automorphisms $^-$ and $^*$
of $\rmH$ defined in \S\ref{sec:gr-triang JTS general} are
\begin{eqnarray*} \overline{a_0[11]+a[12]+b_0[22]} &=&
        \overline{a_0}[11]+\overline{a}[12]+\overline{b_0}[22],\\
 (a_0[11]+a[12]+b_0[22])^* &=&
 b_0[11]+a^\pi[12]+a_0[22].\end{eqnarray*}
We say that $\rmH$ is {\it diagonal} if the diagonal coordinates
$A_0$ generate all coordinates $A$. In this case, the involution
$\pi$ is the reversal involution with respect to $A_0$, i.e.,
$\pi(a_1 \cdots a_n) = a_n \cdots a_1$ for $a_i \in A_0$.
\end{definition}

\begin{example} \label{hermexam}
{\rm  As an example, suppose $A=B\boxplus B^{\rm op}$ is a direct
algebra sum of an associative  graded  algebra $B$ and its opposite
algebra $B^{\rm op}$ and that $\pi$ is the exchange involution
$(b_1, b_2) \mapsto (b_2, b_1)$ of $A$. (Here and in the following
$\boxplus$ denotes the direct sum of ideals.) Then necessarily $A_0
= \{(b,b) : b\in B\}$ and $\rmH_2(A,A_0,\pi, ^-)$ is canonically
isomorphic to $\Mat_2(B)$, the $2\times 2$-matrices over $B$, with
Jordan triple product $P_xy= x\bar yx$ or $P_xy=x\bar y^tx$
depending on the automorphism $^-$ of $A$. Namely, we have the first
case if $\overline{(b_1, b_2)} =  (\bar b_1, \bar b_2)$ where $b
\mapsto \bar b$ is an automorphism of $B$ of period $2$, and we have
the second case if $\overline{(b_1, b_2)} = (b_2^\iota, b_1^\iota)$
where $b\mapsto b^\iota$ is an involution of $B$. }\end{example}

Within triangulated Jordan triple systems, the hermitian matrix
systems can be characterized as follows.

\begin{theorem} \label{hermitian} {\rm \textsc{triangulated hermitian
coordinatization theorem}}. {\rm (\cite[2.4]{MNSimple} for $\La=0$)}
For any graded  Jordan triple system $J=J_1\oplus M\oplus J_2$ which
is faithfully triangulated by $(u; e_1, e_2)$, the graded subsystem
$$J_h=J_1\oplus Cu\oplus J_2,$$ where $C$ denotes the subalgebra
of\/ $\End_k(M)$ generated by $C_0=L(J_1)$, is graded-triangulated
by $(u;e_1,e_2)$ and graded isomorphic to the diagonal
 hermitian matrix system $\rmH=\rmH_2(A,A_0 ,\pi
, ^-)$ under the map
$$x_1\oplus cu\oplus x_2\mapsto
 \left( \begin{array}{cr} L(x_1) & c \\  c^\pi & L(x_2^*)\end{array}
 \right)$$
for $A=C|_{Cu}$, $A_0=C_0|_{Cu}$, $c^\pi$ as in \ref{pi}, and
$\overline{c}=P(e)\circ c \circ P(e)|_{Cu}$. The above isomorphism
maps the triangle $(u; e_1, e_2)$ of $J$ onto the standard triangle
$(1[12]; 1[11], 1[22])$ of\/ $\rmH$. We have $J=J_h$ as graded
triple systems if and only if $M=Cu$.
\end{theorem}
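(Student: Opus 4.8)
The plan is to reduce to the ungraded coordinatization theorem \cite[2.4]{MNSimple}, which already carries out all the Jordan-theoretic work, and then to verify that every object it produces is compatible with the $\La$-grading. First I would apply \cite[2.4]{MNSimple} to $J$ regarded as an (ungraded) faithfully triangulated system. This yields at once that $J_h=J_1\oplus Cu\oplus J_2$ is a subsystem triangulated by $(u;e_1,e_2)$, that $(A,A_0,\pi,{}^-)$ with $A=C|_{Cu}$, $A_0=C_0|_{Cu}$ is a diagonal coordinate system, that the displayed map $\phi$ is an isomorphism of Jordan triple systems onto $\rmH_2(A,A_0,\pi,{}^-)$ sending $(u;e_1,e_2)$ to $(1[12];1[11],1[22])$, and that $J=J_h$ exactly when $M=Cu$ (the latter being clear from comparing the two direct-sum decompositions). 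Thus only the grading statements are new.

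Next I would check that $J_h$ is a graded subsystem. Since the excerpt has already established that $C=\bigoplus_{\la}C^\la$ is a graded subalgebra of $\End^\La_k(M)$ and $u\in M^0$, the subspace $Cu$ is graded with $(Cu)^\la=C^\la u$: indeed $C^\la u\subseteq M^\la$, and directness of $M=\bigoplus_\mu M^\mu$ forces any homogeneous component of $cu$ to arise from the corresponding component of $c$. Consequently $J_h=J_1\oplus Cu\oplus J_2$ is a graded subspace, and being an ungraded subsystem it is a graded subsystem in the sense of \S\ref{sec:grJTS}; alternatively one verifies the criterion \ref{sub} using that $^-$ and the relevant products are graded. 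As $u,e_1,e_2\in J^0$, the system $J_h$ is graded-triangulated in the sense of Def.~\ref{defgratri}.

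It then remains to grade the coordinate system and to show $\phi$ has degree $0$. The algebra $A=C|_{Cu}$ inherits the grading $A^\la=C^\la|_{Cu}$, since restricting a homogeneous operator $c\in C^\la$ to the graded invariant subspace $Cu$ preserves its degree ($c(C^\ga u)\subseteq C^{\ga+\la}u$); likewise $A_0=C_0|_{Cu}$ is graded because $C_0=\bigoplus_\la L(J_1^\la)$ is. The involution $\pi$ and the automorphism $^-$ are homogeneous of degree $0$, as noted after \ref{pi2}, so $(A,A_0,\pi,{}^-)$ is a graded coordinate system in the sense of Def.~\ref{examplehermi}. For $\phi$, a homogeneous element $x_1\oplus cu\oplus x_2\in J_h^\la$ has $x_1\in J_1^\la$ and, since $^*$ is graded, $x_2^*\in J_1^\la$; hence $L(x_1),L(x_2^*)\in C_0^\la$ contribute diagonal entries in $A_0^\la$, while $c\in C^\la$ and its $\pi$-image $c^\pi\in C^\la$ contribute the off-diagonal pair. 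This is exactly an element of $\rmH^\la$ under the convention that $a^\ga[12]$ has degree $\ga$, so $\phi(J_h^\la)\subseteq\rmH^\la$. Being bijective and homogeneous of degree $0$, $\phi$ is a graded isomorphism.

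I do not expect a genuine obstacle: all the substantive identities are inherited verbatim from \cite[2.4]{MNSimple}, and every auxiliary map ($Q_i$, $^-$, $^*$, $\pi$, $L$) has already been recorded as graded of degree $0$. The one point deserving care is the bookkeeping that restriction to $Cu$ transports the gradings of $C$ and $C_0$ faithfully onto $A$ and $A_0$ and that the off-diagonal degree convention of $\rmH$ matches the degree of $cu$; both are handled by the observation that $Cu$ is graded with $(Cu)^\la=C^\la u$ together with the degree-$0$ homogeneity of $\pi$, $^-$ and $^*$.
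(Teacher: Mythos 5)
Your proposal is correct and follows essentially the same route as the paper: the paper likewise invokes the ungraded coordinatization theorem \cite[2.4]{MNSimple} for the subsystem structure, the isomorphism, the preservation of triangles, and the criterion $J=J_h \iff M=Cu$, and then simply observes that $(A,A_0,\pi,{}^-)$ is a graded coordinate system, that $J_h$ is graded, and that the displayed map is homogeneous of degree $0$ (using $(J_i^\la)^*=J_j^\la$). Your additional bookkeeping on $(Cu)^\la=C^\la u$ and the transport of the grading under restriction to $Cu$ just makes explicit what the paper leaves to the reader.
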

\begin{proof}
If $J$ is a graded-triangulated Jordan triple system, then
$(A,A_0,\pi , ^-)$, for $A=C|_{Cu}$, $A_0=C_0|_{Cu}$,
 $d^\pi$ as in \ref{pi} and
$\overline{d}=P(e)\circ d \circ P(e)$, is a  graded coordinate
system. Since $A_0$ generates $A$, $\rmH_2(A,A_0,\pi , ^-)$ is a
diagonal hermitian matrix system. Also, by definition, $J_h$ is
graded.

Now it follows {f}rom \cite[2.4]{MNSimple} that $J_h$ is a subsystem
of $J$ isomorphic to $\rmH_2(A,A_0,\pi , ^-)$ under the map
$$x_1\oplus cu\oplus x_2\mapsto
 \left( \begin{array}{cr} L(x_1) & c \\  c^\pi & L(x_2^*)\end{array}
 \right),$$ which is clearly a graded isomorphism (recall that $(J_i^\la)^* = J_j^\la$).
 That the isomorphism
preserves the triangles is clear. Also by \cite[2.4]{MNSimple}, we
have that $J_h=J$ if and only if $M=Cu$.
\end{proof}

 Let $A$ be  a graded algebra and let $\frS$ be a set of endomorphisms
of $A$ preserving the grading. We call $I$ a {\it graded ideal of
$(A,\frS)$\/} if $I$ is a graded ideal left invariant by all $s\in
\frS$. If $\scP$ is a property of an algebra defined in terms of
ideals we will say that $(A,\frS)$ is {\it graded-$\scP$\/} if
$\scP$ holds for all graded ideals of $(A,\frS)$. We will apply this
for $\scP$=graded-(semi)prime and $\scP$=graded-simple. For
$\frS=\{\pi, ^-\}$ as above we will determine the graded-simple
$(A,\frS)$-structures in Prop.~\ref{relsimple}. Here we only note:

\begin{remark} \label{semip} {\it If $\frS$ is a finite semi-group consisting of automorphisms or involutions of a
graded associative algebra $A$, then $(A,\frS)$ is graded-semiprime
if and only if $A$ is graded-semiprime.} Indeed, if $I$ is a graded
ideal of $A$ with $I^2=0$ then $\hat I = \sum_{s\in \frS}\, s(I)$ is
an $\frS$-invariant graded ideal of $A$ with ${\hat I}^n=0$ for $n>
|\frS|$. Hence $\hat I =0$ and so also $I=0$.
\end{remark}

\begin{proposition}\label{relideals} Let $\rmH=\rmH_2(A,A_0,\pi , ^-)$
be a  hermitian matrix system. Then the graded ideals of\/ $\rmH$
are exactly the submodules
$$\rmH_2(B, B_0)=B_0[11]\oplus B[12]\oplus B_0[22]$$
for $(\pi , ^-)$-invariant graded submodules $B_0\subseteq A_0$ and
$B\subseteq A$ such that for $a\in A$, $a_0\in A_0$, $b\in B$, and
$b_0\in B_0$,
\begin{enumerate}
\item[{\rm (1)}] $ba+b^\pi a^\pi, \, ba_0b^\pi$, and $ab_0a^\pi$ lie
in $B_0$,
\item[{\rm (2)}] $ab_0,\, a_0b,\, aba,$ and $bab$ lie in $B$.
\end{enumerate}
In particular,
\begin{enumerate}
\item[{\rm (i)}] if $B$ is a graded ideal of $(A,\pi, ^-)$,  then $\rmH_2(B, B\cap
A_0)$ is a graded ideal of\/ $\rmH_2(A,A_0, \pi , ^-)$, and,
conversely,
\item[{\rm (ii)}] if $(A,\pi, ^-)$ is graded-semiprime and\/ $\rmH_2(B, B_0)$ is a
nonzero graded ideal of\/ $\rmH_2(A,A_0, \pi , ^-)$, then there
exists a nonzero graded ideal $I$ of $(A,\pi, ^-)$ such that
$\rmH_2(I, I_0)\subseteq \rmH_2(B, B_0)$, for $I_0=I\cap B_0$.
\end{enumerate}\end{proposition}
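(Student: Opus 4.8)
The plan is to establish the ``exactly'' characterization in both directions and then read off (i) and (ii). I would begin with the forward implication. Let $K$ be a graded ideal of $\rmH=\rmH_2(A,A_0,\pi,{}^-)$. Since $e_1,e_2,u\in\rmH^0$ and $K$ is an ideal, $K$ is invariant under the operators $P(e_i)$ and $V_{e_i,e_i}$, so it is compatible with the Peirce decomposition relative to $(u;e_1,e_2)$; thus $K=K_1\oplus N\oplus K_2$ with $K_i\subseteq\rmH_i=A_0[ii]$ and $N\subseteq M=A[12]$, all pieces graded. Writing $K_i=B_0^{(i)}[ii]$ and $N=\{b[12]:b\in B\}$ produces graded submodules $B_0^{(i)}\subseteq A_0$ and $B\subseteq A$. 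The automorphisms $^-$ and $^*=P(e)P(u)$ preserve $K$ because they are built from $P(e)$ and $P(u)$; evaluating the explicit formulas of Def.~\ref{examplehermi} (with $^-$ acting entrywise and $^*$ exchanging the diagonal slots while sending $a[12]\mapsto a^\pi[12]$) forces $B_0^{(1)}=B_0^{(2)}=:B_0$ and shows that $B_0$ and $B$ are stable under both $^-$ and $\pi$.

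Next I would extract the multiplication conditions (1) and (2) from the three closure requirements $P(\rmH)K\subseteq K$, $P(K)\rmH\subseteq K$, $\{\rmH,\rmH,K\}\subseteq K$, evaluated on the spanning elements $a_0[11]$, $a[12]$, $c_0[22]$. The off-diagonal relations of (2) come from the Peirce-$1$ action $\{a_0[ii],e_i,b[12]\}=(a_0b)[12]$ together with $P(a[12])(b[12])=(a\overline{b^\pi}a)[12]$ and $P(b[12])(a[12])=(b\overline{a^\pi}b)[12]$, while the diagonal relations of (1) come from products such as $P(b[12])(a_0[jj])=(ba_0b^\pi)[ii]$ and the trace products $P(b[12],a[12])e_i$, which yield the elements $ba+b^\pi a^\pi$; the $^-$- and $\pi$-invariance of $B$ lets me absorb the bars, and $AB_0\subseteq B$ with $a=1$ gives $B_0\subseteq B$. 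The converse is the same computation read backwards: assuming $B_0,B$ are $(\pi,{}^-)$-invariant graded submodules satisfying (1)--(2), one checks on the spanning triples that every product with at least one factor in $\rmH_2(B,B_0)$ returns to $\rmH_2(B,B_0)$. Homogeneity is automatic because all the data are graded, so this is a finite routine verification.

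For (i), given a graded ideal $B$ of $(A,\pi,{}^-)$ I put $B_0=B\cap A_0$; the resulting $\rmH_2(B,B\cap A_0)$ is graded, and (1)--(2) hold. The memberships in (2) are immediate from $B$ being two-sided. For (1) I invoke two standard features of an ample subspace: polarizing $aa^\pi\in A_0$ gives $c+c^\pi\in A_0$ for all $c\in A$, and $aA_0a^\pi\subseteq A_0$. Hence $ba+b^\pi a^\pi=ba+(ba)^\pi\in A_0\cap B=B_0$, and likewise $ba_0b^\pi,\,ab_0a^\pi\in A_0\cap B=B_0$.

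The substantive part is (ii), and this is where I expect the main obstacle. Here I must reverse direction and produce from $(B,B_0)$ a genuine nonzero two-sided ideal of $A$ lying \emph{inside} $B$. First $B\neq0$, since $B=0$ would force $AB_0\subseteq B=0$, hence $B_0=0$ and $\rmH_2(B,B_0)=0$. I would then take $I$ to be the largest graded two-sided ideal of $A$ contained in $B$; it is automatically $(\pi,{}^-)$-invariant and graded because $B$ is, and $\rmH_2(I,I\cap B_0)\subseteq\rmH_2(B,B_0)$ is then immediate. The difficulty is that by (2) the module $B$ is only a \emph{quadratic} ideal (one has $aba,bab\in B$, so $ABA$ and $BAB$ lie in $B$ only after symmetrization when $\frac12\notin k$), so one cannot simply generate a two-sided ideal from $B$ and hope to stay inside $B$; this is exactly why graded-semiprimeness of $(A,\pi,{}^-)$ is hypothesized. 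The plan is to build $I$ out of the quadratic products of $B$ and $B_0$ and to prove $I\neq0$ by showing that its vanishing would yield a nonzero $\frS$-invariant graded ideal of square zero (with $\frS=\{\pi,{}^-\}$), contradicting graded-semiprimeness through Remark~\ref{semip}. Converting the nonvanishing of $I$ into such a square-zero statement is the delicate step; everything else is bookkeeping with the product formulas already listed.
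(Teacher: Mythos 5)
The paper itself disposes of this proposition in one line, by reducing to the ungraded case \cite[2.7]{MNSimple}: all the conditions (1)--(2) are quadratic/multilinear identities compatible with the grading, so the graded statement follows from the $\La=0$ statement. You instead redo the whole computation from scratch. For the ``exactly'' characterization and for part (i) your route is viable: the Peirce-compatibility of a graded ideal, the action of $^-$ and $^*=P(e)P(u)$ forcing $B_0^{(1)}=B_0^{(2)}$ and the $(\pi,{}^-)$-invariance, and the extraction of (1)--(2) from the spanning products are all correct in outline. One small slip: you write $b^\pi a^\pi=(ba)^\pi$, but $(ba)^\pi=a^\pi b^\pi$; the trace element actually produced by $Q_i(b[12],a[12])$ is $ba^\pi+(ba^\pi)^\pi$, and your argument for (i) should be phrased in terms of that element (both summands lie in the ideal $B$ and their sum lies in $\rmH(A,\pi)$, hence in $A_0$ by ampleness, which is what you need).

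The genuine gap is in (ii), and you concede it yourself: ``converting the nonvanishing of $I$ into such a square-zero statement is the delicate step'' is precisely the entire mathematical content of (ii), and it is not carried out. Moreover the strategy you propose is pointed the wrong way: you take $I$ to be the \emph{largest} graded two-sided ideal of $A$ contained in $B$ and hope that $I=0$ yields a nonzero nilpotent $\frS$-invariant ideal, but there is no visible mechanism producing such an ideal from the mere emptiness of $B$'s associative core. The working argument (as in \cite[2.7]{MNSimple}) goes the other way: one \emph{constructs} a candidate ideal explicitly from the data --- for instance the associative ideal generated by $B_0$, which conditions (1)--(2) together with $\pi$-invariance force to lie inside $B$ --- and uses graded-semiprimeness only to show it is nonzero, e.g.\ by showing that $B_0=0$ would give $bA_0b^\pi=0$ for all $b\in B$ and hence a nonzero nilpotent graded ideal via Remark~\ref{semip}, contradicting $\rmH_2(B,B_0)\ne 0$. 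Since everything is generated by homogeneous elements, the resulting $I$ is automatically graded, which is the only point where the graded setting adds anything to the ungraded proof. Without this construction your proof of (ii) is a statement of intent, not a proof.
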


\begin{proof} This easily follows {f}rom the case $\La=0$ which is
proven in \cite[2.7]{MNSimple}. \end{proof}

As a consequence, we have the following corollary whose proof is
again omitted since it is based on a standard argument.

\begin{corollary} \label{hermitiansimpl} {\rm (\cite[2.7(5), 2.11]
{MNSimple} for $\La=0$)} Let $\rmH=\rmH_2(A,A_0, \pi , ^-)$ be a
 hermitian matrix system. Then
\begin{enumerate}
 \item[{\rm (i)}] $\rmH$ is graded-simple iff $(A, \pi , ^-)$ is graded-simple.
\sm
\item[{\rm (ii)}] The following are equivalent:
 \begin{enumerate}
   \item[{\rm (a)}] $\rmH$ is graded-nondegenerate,

    \item[{\rm (b)}] $\rmH$ is graded-semiprime,

    \item[{\rm (c)}] $(A, \pi , ^-)$ is graded-semiprime,

    \item[{\rm (d)}] $A$ is graded-semiprime.
  \end{enumerate}
\sm

\item[{\rm (iii)}]$\rmH$ is graded-prime iff $(A, \pi , ^-)$ is graded-prime.
\end{enumerate}\end{corollary}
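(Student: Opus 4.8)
The plan is to reduce everything to the corresponding ungraded statements in \cite[2.7(5), 2.11]{MNSimple}, using Prop.~\ref{relideals} as the translation device between graded ideals of $\rmH$ and $(\pi,{}^-)$-invariant graded submodules of $A$. The point is that Prop.~\ref{relideals}(i)--(ii) already sets up an inclusion-reversing correspondence between nonzero graded ideals of $\rmH$ and nonzero graded ideals of $(A,\pi,{}^-)$, so each of the three equivalences should follow by the same formal arguments used in the ungraded case, with ``ideal'' replaced by ``graded ideal'' throughout.

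For part (i), I would argue both directions via Prop.~\ref{relideals}. If $(A,\pi,{}^-)$ is graded-simple, then any nonzero graded ideal $\rmH_2(B,B_0)$ of $\rmH$ produces, by (ii) of Prop.~\ref{relideals}, a nonzero graded ideal $I$ of $(A,\pi,{}^-)$ with $\rmH_2(I,I_0)\subseteq\rmH_2(B,B_0)$; graded-simplicity forces $I=A$, hence $B=A$ and $B_0=A_0$, so $\rmH_2(B,B_0)=\rmH$. (One must also check $P(\rmH)\rmH\neq0$, which is immediate since $e_1=1[11]$ is a nonzero tripotent.) Conversely, if $\rmH$ is graded-simple, then given a nonzero graded ideal $B$ of $(A,\pi,{}^-)$, part (i) of Prop.~\ref{relideals} yields the nonzero graded ideal $\rmH_2(B,B\cap A_0)$ of $\rmH$, which must therefore equal $\rmH$, forcing $B=A$; graded-semiprimeness of $A$ (needed to apply (ii)) comes for free here, or one argues directly. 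This is exactly the skeleton of \cite[2.11]{MNSimple}, now carried out with graded ideals.

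For part (ii), the cycle of implications should run (a)$\Rightarrow$(b)$\Rightarrow$(c)$\Rightarrow$(d)$\Rightarrow$(a). The implication (a)$\Rightarrow$(b) is the general fact, noted in \S\ref{sec:grJTS}, that graded-nondegenerate implies graded-semiprime. For (b)$\Rightarrow$(c) I would take a graded ideal $I$ of $(A,\pi,{}^-)$ with $I^2=0$ and use Prop.~\ref{relideals}(i) to build the graded ideal $\rmH_2(I,I\cap A_0)$, whose square vanishes by the multiplication rules, so graded-semiprimeness of $\rmH$ kills it and hence $I=0$. The equivalence (c)$\Leftrightarrow$(d) is precisely Rem.~\ref{semip} applied to $\frS=\{\pi,{}^-\}$, a finite semigroup of automorphisms/involutions. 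Finally (d)$\Rightarrow$(a) uses Lem.~\ref{inheritance}: graded-nondegeneracy of $\rmH$ reduces to $\R Q_1=0$ and $J_1=A_0$ having no graded trivial elements, which under the coordinatization translates into a semiprimeness condition on $A$. Part (iii) for graded-primeness is handled identically: $P(\rmH_2(B,B_0))\rmH_2(B',B_0')=0$ translates, via the product formulas, into a product of the associated graded ideals of $(A,\pi,{}^-)$ vanishing, and one invokes Prop.~\ref{relideals}(ii) in both ideals.

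The main obstacle I anticipate is bookkeeping rather than conceptual: one must verify that the ideal-to-ideal correspondence of Prop.~\ref{relideals} genuinely respects the \emph{graded} versions of the radical-theoretic products (that $\rmH_2(B,B_0)$ is trivial/square-zero exactly when the corresponding $B,B_0$ satisfy the matching vanishing on $A$), and that passing between $\rmH$ and $A$ does not lose graded information. Since Prop.~\ref{relideals} is already stated with graded submodules $B,B_0$ satisfying conditions (1)--(2), and since the ungraded arguments in \cite[2.7, 2.11]{MNSimple} are purely ideal-theoretic with no appeal to a grading, each step transfers verbatim once ``ideal'' is read as ``graded ideal.'' For this reason the proof is safely left as an application of the ungraded results together with Prop.~\ref{relideals} and Rem.~\ref{semip}, which is precisely why the authors omit it as ``based on a standard argument.''
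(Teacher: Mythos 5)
Your proposal is correct and follows exactly the route the paper intends: the authors omit the proof as ``based on a standard argument,'' deriving the corollary from Prop.~\ref{relideals}, Rem.~\ref{semip} and the ungraded results of \cite[2.7(5), 2.11]{MNSimple}, which is precisely the translation you carry out. The only step needing marginally more care than you give it is (d)$\Rightarrow$(a), where one must verify that homogeneous trivial elements of $J_1$ and the graded radical of $Q_1$ vanish when $A$ is graded-semiprime; this is the verbatim graded version of the computation in \cite[2.7(5)]{MNSimple}, which transfers because it only ever manipulates homogeneous elements.
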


Because of Cor.~\ref{hermitiansimpl}(i) it is of interest to
determine the graded-simple coordinate systems $(A,\pi, ^-)$. This
can be done without assuming that $A$ is associative.

\begin{proposition} \label{relsimple}
Let $A$ be an arbitrary, not necessarily associative,  graded
 algebra  with commuting involution $\pi$ and automorphism $^-$
of order $2$, which are homogeneous of degree $0$. Then the
graded-simple structures $(A,\pi, ^-)$ are precisely the following:
 \begin{enumerate}
\item[{\rm (I)}] graded-simple $A$ with graded involution $\pi$ and graded automorphism
$^ -$;

\item[{\rm (II)}]  $A\cong_{\La}
B\boxplus B^{\rm op}$ with exchange involution $\pi$ for a
graded-simple $B$ with graded automorphism $^- : (b_1, b_2)^\pi=
(b_2, b_1), \overline{(b_1, b_2)}=(\overline{b_1},\overline{b_2})$;

\item[{\rm (III)}] $A\cong_{\La} B\boxplus B^{\rm
op}$ with exchange involution $\pi$ for a graded-simple $B$ with
graded involution $\iota : (b_1,b_2)^\pi=(b_2,b_1),
\overline{(b_1,b_2)}=({b_2}^\iota,{b_1}^\iota)$;

\item[{\rm (IV)}] $A\cong_{\La} B\boxplus B$ with exchange
 automorphism $^-$ for a graded-simple $B$ with graded involution $\pi :
(b_1,b_2)^\pi=({b_1}^\pi,{b_2}^\pi),
\overline{(b_1,b_2)}=(b_2,b_1)$;

\item[{\rm (V)}] $A\cong_{\La} B\boxplus B^{\rm op} \boxplus B\boxplus
B^{\rm op}$ for a graded-simple $B$ with $\pi$ the exchange
involution of $C= B\boxplus B^{\rm op}$ and $^-$ the exchange
automorphisms of $C\boxplus \overline{C}:
(a_1,a_2,a_3,a_4)^\pi=(a_2,a_1,a_4,a_3)$ and $
\overline{(a_1,a_2,a_3,a_4)}=(a_3,a_4,a_1,a_2)$.
\end{enumerate}
\end{proposition}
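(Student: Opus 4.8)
The plan is to organize everything around the small group $\frS$ generated by $\pi$ and ${}^-$. Since $\pi^2 = ({}^-)^2 = \Id$ and the two operators commute, $\frS = \{\Id, \pi, {}^-, \pi\,{}^-\}$ is an elementary abelian $2$-group of order at most $4$, whose elements split into two automorphisms $\Id, {}^-$ and two involutions (anti\-auto\-morphisms) $\pi, \pi\,{}^-$. The decisive observation is that the five subgroups of a Klein four-group $\frS$ --- namely $\frS$ itself, the three subgroups of order $2$, and $\{\Id\}$ --- are in bijection with the five cases (I)--(V). So once $A$ is split into graded-simple pieces that are permuted by $\frS$, the proof reduces to bookkeeping over this subgroup lattice, the only extra data being whether the operator realizing a given piece is an automorphism (producing a copy of $B$) or an anti-automorphism (producing $B^{\rm op}$).

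First I would reduce to the graded-semiprime situation. Being graded-$\frS$-simple, $(A,\frS)$ is a fortiori graded-$\frS$-semiprime, and Remark~\ref{semip} (which applies since $\frS$ is a finite semigroup of automorphisms and involutions) then yields that $A$ itself is graded-semiprime. This is what allows distinct $\frS$-translates of a graded ideal to interact cleanly.

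The core step is a Clifford-type decomposition lemma: a graded-semiprime algebra $A$ carrying a finite group $\frS$ of graded automorphisms and anti-automorphisms, for which $(A,\frS)$ is graded-simple, is an orthogonal direct sum $A \cong_\La \boxplus_{s\in \frS/\frS_0} s(B)$ of graded-simple ideals permuted transitively by $\frS$, where $B$ is a graded-simple ideal and $\frS_0=\{s\in \frS : s(B)=B\}$ its stabilizer. I would argue this as follows. If $A$ has no proper nonzero graded ideal it is already graded-simple (case (I)); otherwise pick a nonzero proper graded ideal $B$, whose $\frS$-translates are pairwise orthogonal by graded-semiprimeness, so that $\sum_{s\in\frS} s(B)$ is direct and, being a nonzero $\frS$-invariant graded ideal, equals $A$. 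The \emph{main obstacle} is to arrange $B$ to be graded-\emph{simple}, since there is no chain condition and no associativity available: I would handle this by choosing the stabilizer $\frS_0$ as small as possible among nonzero graded ideals, using that a proper nonzero graded subideal of $B$ would, on applying $\frS$, generate a proper $\frS$-invariant graded ideal of $A$, contradicting graded-$\frS$-simplicity.

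Finally I would run the case analysis on $\frS_0$. If $\frS_0=\frS$ there is a single factor and $\pi,{}^-$ restrict to $A=B$, giving (I). If $[\frS:\frS_0]=2$ there are two factors, swapped by whichever generator lies outside $\frS_0$: for $\frS_0=\{\Id,{}^-\}$ the swap is the anti-automorphism $\pi$, so $A\cong_\La B\boxplus B^{\rm op}$ with ${}^-$ inducing a graded automorphism of $B$, which is (II); for $\frS_0=\{\Id,\pi\,{}^-\}$ the swap is again $\pi$, so $A\cong_\La B\boxplus B^{\rm op}$, now with the stabilizing anti-automorphism $\pi\,{}^-$ inducing a graded involution $\iota$ of $B$, which is (III); for $\frS_0=\{\Id,\pi\}$ the swap is the automorphism ${}^-$, so $A\cong_\La B\boxplus B$ (no opposite) with $\pi$ inducing a graded involution of $B$, which is (IV). If $\frS_0=\{\Id\}$ the action on the four factors is regular, and since the two anti-automorphisms contribute opposite factors one gets $A\cong_\La B\boxplus B^{\rm op}\boxplus B\boxplus B^{\rm op}$, which is (V). The converse direction is routine: in each of (I)--(V) a nonzero $\frS$-invariant graded ideal meets some factor in a nonzero graded ideal of the graded-simple $B$, hence contains that entire factor, and then by $\frS$-invariance and transitivity contains all factors, so equals $A$.
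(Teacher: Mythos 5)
Your overall strategy --- matching the five cases to the five subgroups of the Klein four-group $\frS=\{\Id,\pi,{}^-,\pi{}^-\}$ via the stabilizer of a graded-simple factor --- is the right one, and it is essentially what the paper has in mind (the paper itself only cites the ungraded case \cite[2.8]{MNSimple}). The case analysis at the end and the converse are fine. The gap is in your core decomposition lemma. The claim that the $\frS$-translates of an \emph{arbitrary} nonzero proper graded ideal $B$ are pairwise orthogonal ``by graded-semiprimeness'' is false, and semiprimeness is not the mechanism that produces orthogonality here. Concretely, take $A=k\boxplus k\boxplus k\boxplus k$ with the trivial grading, $\pi$ the permutation $(13)(24)$ and ${}^-$ the permutation $(12)(34)$ of the four factors (legitimate since $A$ is commutative). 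Then $(A,\pi,{}^-)$ is graded-simple (it is an instance of case (V) with $B=k$) and $A$ is semiprime, yet $B=ke_1\oplus ke_2\oplus ke_3$ is a proper nonzero graded ideal with $B\cdot\overline{B}\supseteq ke_1\ne 0$, and $\sum_{s\in\frS}s(B)=A$ is not a direct sum. The same example defeats both of your proposed repairs: this $B$ already has trivial, hence minimal, stabilizer, so minimizing $|\frS_0|$ does not force $B$ to be graded-simple; and its subideal $ke_1$ generates all of $A$ under $\frS$, not a proper $\frS$-invariant ideal, so that contradiction never materializes. (As a side remark, Remark~\ref{semip} is stated for \emph{associative} $A$, whereas the proposition allows non-associative $A$; fortunately semiprimeness of $A$ turns out not to be needed at all.)

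The correct device, which is what \cite[2.8]{MNSimple} uses, is to descend through a chain $\{\Id\}<\frT<\frS$ of index-$2$ subgroups rather than to pick an arbitrary ideal. If $(A,\frT)$ is not graded-simple, choose a proper nonzero graded \emph{$\frT$-invariant} ideal $I$ and a coset representative $s\in\frS\setminus\frT$; then both $I\cap s(I)$ and $I+s(I)$ are $\frS$-invariant graded ideals (this uses only the $\frT$-invariance of $I$ and $s^2\in\frT$, not semiprimeness), whence $I\cap s(I)=0$ and $A=I\boxplus s(I)$. Graded-$\frT$-simplicity of $I$ then follows because a proper nonzero graded $\frT$-invariant ideal $C$ of $I$ is an ideal of $A$ (as $s(I)\cdot I\subseteq s(I)\cap I=0$) and $C\oplus s(C)\subsetneq A$ is $\frS$-invariant. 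Iterating once more with $\{\Id\}<\frT$ produces a graded-simple $B$ whose distinct translates automatically intersect trivially, and only then does your bookkeeping over $\frS_0$ go through. So the skeleton of your proof is sound, but the step that actually manufactures a graded-simple summand is missing and cannot be supplied by semiprimeness or by minimizing the stabilizer.
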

\begin{proof} The proof is again a straightforward generalization of the
corresponding result in the ungraded situation, which is
\cite[2.8]{MNSimple}. \end{proof}

For later use we note the following special case of
Prop.~\ref{relsimple} for a commutative algebra $D$ and $\pi$ the
identity ``involution''. We note that $D$ is not assumed to be
unital.

\begin{corollary}\label{patrick} Let $D$ be a commutative  graded
 algebra with a graded automorphism $^-$ of order $2$. Then
$(D, ^-)$ is graded-simple if and only if either $D$ is
graded-simple or $D\cong_{\La} B\boxplus B$, for a commutative
graded-simple $B$ with the exchange automorphism.\end{corollary}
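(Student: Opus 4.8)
The plan is to derive Corollary~\ref{patrick} directly from Proposition~\ref{relsimple} by specializing to the case where the involution $\pi$ is the identity and $A=D$ is commutative, and then checking which of the five cases (I)--(V) can actually occur. First I would observe that if $\pi=\Id$ on a commutative $D$, then automatically $H(D,\pi)=D$ and every subspace is $\pi$-invariant, so the pair $(D,\pi,{}^-)$ reduces to the pair $(D,{}^-)$: a commutative graded algebra with a graded automorphism ${}^-$ of order $2$, and graded-simplicity of $(D,\pi,{}^-)$ in the sense of Prop.~\ref{relsimple} is exactly graded-simplicity of $(D,{}^-)$ in the sense of the corollary, since the only extra structure element, $\pi$, acts trivially on ideals.

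Next I would run through cases (I)--(V) of Prop.~\ref{relsimple} under the two standing hypotheses that (a) $D$ is commutative and (b) $\pi=\Id$. Case (I) says $D$ is graded-simple with graded involution $\pi$ and graded automorphism ${}^-$; with $\pi=\Id$ this is precisely the first alternative of the corollary, namely $D$ graded-simple. Cases (II), (III) and (V) all produce an algebra built with an \emph{exchange involution} $\pi$ on a direct sum $B\boxplus B^{\rm op}$ (or a fourfold sum): the exchange involution is never the identity unless $B=0$, so none of these are compatible with $\pi=\Id$ and they must be discarded. Case (IV), by contrast, is the one where the \emph{automorphism} ${}^-$ is the exchange map and the \emph{involution} $\pi$ acts componentwise as $\pi:(b_1,b_2)^\pi=(b_1^\pi,b_2^\pi)$; imposing $\pi=\Id$ forces the involution $\pi$ on $B$ to be the identity, so $B$ is commutative (since $A=D$ is commutative forces each summand to be commutative), and we obtain $D\cong_\La B\boxplus B$ with the exchange automorphism ${}^-$ and $B$ a commutative graded-simple algebra. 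This is exactly the second alternative of the corollary.

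The only subtle point, and the step I would expect to be the main obstacle, is the bookkeeping of which structure map is an involution versus an automorphism in each case, since commutativity of $D$ interacts differently with these. I would make sure to note that commutativity of $D$ in case (IV) forces $B$ itself to be commutative (as $B$ embeds as an ideal, hence as a subalgebra, of $D$), and that $B^{\rm op}=B$ for commutative $B$, which is why case (IV) gives $B\boxplus B$ rather than $B\boxplus B^{\rm op}$ and why the distinction between (II)/(III) and (IV) collapses appropriately. Finally I would verify the converse direction is immediate: a graded-simple $D$ with $\pi=\Id$ clearly gives a graded-simple $(D,\Id,{}^-)$, and $B\boxplus B$ with exchange automorphism and trivial involution is graded-simple because the only ${}^-$-invariant graded ideals are $0$ and the whole algebra (any graded ideal of $B\boxplus B$ is $I_1\boxplus I_2$ with $I_i\trianglelefteq B$, and ${}^-$-invariance forces $I_1=I_2$, whence graded-simplicity of $B$ gives $I_1=I_2\in\{0,B\}$). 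This completes the identification of the graded-simple $(D,{}^-)$ with the two listed alternatives.
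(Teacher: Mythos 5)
Your proposal is correct and follows exactly the route the paper takes: the paper presents Cor.~\ref{patrick} precisely as the specialization of Prop.~\ref{relsimple} to commutative $D$ with $\pi=\Id$, with cases (II), (III), (V) excluded because an exchange involution on a nonzero direct sum is never the identity, case (I) giving the first alternative and case (IV) the second. The only cosmetic remark is that your description of graded ideals of $B\boxplus B$ as $I_1\boxplus I_2$ uses (implicitly) that a graded-simple $B$ satisfies $B^2=B$, which is what makes that decomposition valid even without assuming $B$ unital.
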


Recall (\cite[\S1.14]{JP}) that a Jordan triple system $T$ is called
{\it polarized} if there exist submodules $T^\pm$ such that
$T=T^+\oplus T^-$ and for $\sigma =\pm$ we have
$P(T^\sigma)T^{\sigma}=0=\{T^\sigma,T^\sigma,T^{-\sigma}\}$ and
$P(T^\sigma)T^{-\sigma}\subseteq T^\sigma$. In this case,
$V=(T^+,T^-)$ is a Jordan pair. Conversely, to any Jordan pair
$V=(V^+,V^-)$ we can associate a polarized Jordan triple system
$T(V)=V^+\oplus V^-$ with quadratic map $P$ defined by
$P(x)y=Q(x^+)y^-\oplus Q(x^-)y^+$
 for $x=x^+\oplus x^-$ and $y=y^+\oplus y^-$. In fact, the
category of Jordan pairs is equivalent to the category of polarized
Jordan triple systems. It is also known that for any Jordan triple
system $T$ the pair $(T,T)$ is a Jordan pair (\cite[\S1.13]{JP}).
Hence, it has an associated polarized Jordan triple system which we
will denote $T\oplus T$. Examples are the cases (IV) and (V) of
Prop.~\ref{hermconclusion} below.

\begin{proposition}\label{hermconclusion} {\rm \textsc{hermitian
graded-simplicity criterion}}. A  graded  Jordan triple system is a
graded-simple-triangulated hermitian matrix system $\rmH_2(A,A_0,\pi
, ^-)$ if and only if it is graded isomorphic to one of the
following:
\begin{enumerate}
\item[{\rm (I)}] $\rmH_2(A,A_0, \pi , ^-)$ for a graded-simple $A$;

\item[{\rm (II)}] $\Mat_2(B)$ for a graded-simple associative unital $B$ with graded automorphism
$^-$, where $\overline{(b_{ij})}=(\overline{b_{ij}})$ for
$(b_{ij})\in \Mat_2(B)$ and $P(x)y=x\overline{y}x$;

\item[{\rm (III)}] $\Mat_2(B)$ for a graded-simple associative unital $B$ with graded involution
$\iota$,  where $\overline{(b_{ij})}=(b_{ij}^\iota)$ for
$(b_{ij})\in \Mat_2(B)$ and $P(x)y=x\overline{y}^tx$;

\item[{\rm (IV)}] polarized $\rmH_2(B,B_0, \pi)\oplus \rmH_2(B,B_0, \pi)$
for a graded-simple  $B$ with graded involution $\pi$;

\item[{\rm (V)}] polarized $\Mat_2(B)\oplus \Mat_2(B)$ for a
graded-simple associative unital $B$ and $P(x)y=xyx$.\end{enumerate}
Among the cases {\rm (II)--(V)}, the matrix system is diagonal iff
$B$ is noncommutative.
\end{proposition}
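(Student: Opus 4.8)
The plan is to combine the two structural results already in hand: Corollary~\ref{hermitiansimpl}(i), which reduces graded-simplicity of the hermitian matrix system $\rmH_2(A,A_0,\pi,^-)$ to graded-simplicity of the coordinate structure $(A,\pi,^-)$, and Proposition~\ref{relsimple}, which lists the five possible shapes of a graded-simple $(A,\pi,^-)$. Since every hermitian matrix system is graded-triangulated by construction (Def.~\ref{examplehermi}), the graded-simple-triangulated hermitian matrix systems are \emph{exactly} the $\rmH_2(A,A_0,\pi,^-)$ with $(A,\pi,^-)$ graded-simple. So the proof amounts to feeding each of the cases (I)--(V) of Proposition~\ref{relsimple} into the hermitian construction and recognizing the resulting Jordan triple system, then reading the identifications backwards for the converse.

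Case (I) of Proposition~\ref{relsimple} reproduces case (I) of the statement verbatim. For cases (II) and (III), where $A\cong_\La B\boxplus B^{\rm op}$ carries the exchange involution $\pi$, I would invoke Example~\ref{hermexam}: there $\rmH_2(A,A_0,\pi,^-)$ is identified with $\Mat_2(B)$, and the triple product is $x\overline y x$ exactly when $^-$ is of automorphism type $\overline{(b_1,b_2)}=(\overline{b_1},\overline{b_2})$ (this is Prop.~\ref{relsimple}(II), yielding statement~(II)), while it is $x\overline y^t x$ exactly when $^-$ is of involution type $\overline{(b_1,b_2)}=(b_2^\iota,b_1^\iota)$ (this is Prop.~\ref{relsimple}(III), yielding statement~(III)).

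Cases (IV) and (V), where $^-$ acts as an exchange \emph{automorphism} on a direct sum $A=A'\boxplus\overline{A'}$, produce the polarized systems. The key point is that $^-$ enters the product via $P(X)Y=X\overline Y^{\pi t}X$; writing $\rmH^\sigma$ for the submodule of matrices with entries in $A'$ (for $\sigma=+$) resp.\ $\overline{A'}$ (for $\sigma=-$), the annihilation $A'\overline{A'}=0$ of the direct summands forces $P(\rmH^\sigma)\rmH^\sigma=0$ and $P(\rmH^\sigma)\rmH^{-\sigma}\subseteq\rmH^\sigma$, so $\rmH$ is polarized in the sense recalled before the proposition. In case (IV) each summand is $\rmH_2(B,B_0,\pi)$ (here $^-$ is trivial on each copy), giving the polarized $\rmH_2(B,B_0,\pi)\oplus\rmH_2(B,B_0,\pi)$ of statement~(IV); in case (V) each summand is $\rmH_2(C,C_0,\pi)$ with $C=B\boxplus B^{\rm op}$ and exchange involution, which by Example~\ref{hermexam} (trivial $^-$) is $\Mat_2(B)$ with product $xyx$, giving the polarized $\Mat_2(B)\oplus\Mat_2(B)$ of statement~(V). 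The converse, that each of (I)--(V) is graded-simple-triangulated, follows by reversing these identifications and applying Corollary~\ref{hermitiansimpl}(i) once more.

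Finally, the diagonality assertion reduces to deciding, in cases (II)--(V), when the diagonal coordinates $A_0$ generate $A$. For $A=B\boxplus B^{\rm op}$ with $A_0=\{(b,b):b\in B\}$ one computes $(b,b)(c,c)=(bc,cb)$, so $A_0$ is already a subalgebra, and hence fails to generate $A$, precisely when $B$ is commutative, whereas it generates $A$ as soon as $bc\ne cb$ for some $b,c$; cases (IV) and (V) reduce to this computation summand-wise. I expect the main obstacle to be the careful bookkeeping in cases (IV) and (V): verifying that the polarization induced by $^-$ coincides with the Jordan-pair polarization, and that under Example~\ref{hermexam} the residual involution/automorphism data collapses exactly to the stated products $x\overline y x$, $x\overline y^t x$, and $xyx$. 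Everything else is a transcription of Proposition~\ref{relsimple} through the hermitian construction.
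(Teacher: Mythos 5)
Your proposal follows exactly the paper's own argument: reduce via Corollary~\ref{hermitiansimpl}(i) to graded-simplicity of $(A,\pi,{}^-)$, invoke Proposition~\ref{relsimple} for the five possible shapes, and match cases (I)--(V) term by term. The paper leaves that matching (and the diagonality criterion) to the reader with a pointer to the ungraded case in \cite[2.10]{MNSimple}, whereas you spell out the identifications via Example~\ref{hermexam} and the polarization check; the details you supply are correct.
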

\begin{proof} By definition  and Cor.~\ref{hermitiansimpl}(i), a graded
Jordan triple system is a graded-simple $J=\rmH_2(A,A_0, \pi , ^-)$
if and only if $(A,A_0, \pi , ^-)$ is a  graded  coordinate system
where $(A, \pi , ^-)$ is graded-simple. Since the graded-simple
structures $(A, \pi , ^-)$  have been described in
Prop.~\ref{relsimple}, it now suffices to show that the cases
(I)--(V) of Prop~\ref{relsimple} correspond to the cases (I)--(V)
above. This is straightforward and will be left to the reader, see
\cite[2.10]{MNSimple} for the case
 in which the grading group $\La=0$. \end{proof}

In order  to describe division-triangulated hermitian matrix systems
we need to introduce some concepts from the theory of
division-graded algebras.

\begin{definition} \label{asstordef} A unital associative  graded algebra $A=\bigoplus_{\la \in \La} A^\la$ is called {\it
predivision-graded} if every nonzero homogeneous space contains an
invertible element. The {\it support\/} $\supp_\La A = \{ \la\in \La
: A^\la \ne 0\}$ of a predivision-graded $A$ is a subgroup of $\La$.
We will call $A$ {\it predivision-$\La$-graded\/} if $\supp_\La A =
\La$.

After choosing a family of invertible elements $(u_\la: \la \in
\La)$ with $u_\la \in A^\la$, one can identify a
predivision-$\La$-graded algebra $A$ with a crossed-product algebra
$A=(B,\La, \sigma,\tau)$ in the sense of \cite{Pass} with $B=A^0$
and the twist $\tau$ and the action $\sigma$ defined by $u_\la u_\mu
= \tau(\la,\mu) u_{\la + \mu}$ and $u_\la b=
\big({^{\sigma(\la)}}(b)\big)u_\la$ for $b\in B$.

An example of a predivision-$\La$-graded algebra is the so-called
{\it twisted group algebra $B^t[\La]$\/}, i.e., the crossed product
algebra $(B,\La, \sigma,\tau)$ with $\sigma(\la) = \Id_B$ for all
$\la \in \La$. An immediate special case of a twisted group algebra
is $k[\La]$, the {\it group algebra} of $\La$ over $k$ where
$\tau(\la,\mu) = 1_k$ for all $\la,\mu \in k$.

A unital associative graded algebra $A$ is called {\it
division-graded\/} if every non\-zero homogeneous element is
invertible, and  such an algebra  is called {\it
division-$\La$-graded\/} if $\supp_\La A= \La$. A
division-$\La$-graded algebra $A$ is the same as a crossed product
algebra  $(B,\La, \sigma,\tau)$ with $B$ a division algebra. A
division-graded algebra is in particular graded-simple.

A unital associative commutative graded algebra $A$ is graded-simple
if and only if it is division-graded. Such algebras will be called
{\it graded-fields}, more precisely {\it $\La$-graded-fields\/} if
$\supp_\La A = \La$. A $\La$-graded-field is the same as a twisted
group algebra $B^t[\La]$ with $B$ a field. If $\La$ is free, a
$\La$-graded field is isomorphic to the group algebra of $\La$.

If $A$ is a division-graded algebra defined over a field $k$ and
such that $\dim_k A^\lambda \le 1$, then $A$ is said to be an {\it
(associative) torus\/}. In this case we call $A$ a {\it
$\La$-torus\/} if $\supp_\La A= \La$. From the point of view of
crossed product algebras, a $\La$-torus is the same as a twisted
group algebra $k^t[\La]$ over the field $k$. We note that in this
case $\tau$ is a $2$-cocycle of $\La$ with coefficients in $k$.
 \end{definition}

\begin{example} $\ZZ^n$-tori. {\rm  Let $A$ be a $\ZZ^n$-torus and choose
nonzero $t_i \in A^{\e_i}$, where $\e_i$ is the ith-canonical basis
vector of $\ZZ^n$. Then the algebra structure of $A$ is uniquely
determined by the rules \begin{equation}\label{eq:quantor1}
  t_i t_i^{-1} = 1_A = t_i^{-1} t_i, \;1\le i \le n\quad\hbox{and}\quad
   t_it_j = q_{ij} t_j t_i, \; 1\le i,j\le n \end{equation}
where $q_{ij}\in k$ satisfy \begin{equation} \label{eq:quantor2}
   q_{ii} = 1 = q_{ij}\,q_{ji} \quad\hbox{for } 1\le i,j\le n.
\end{equation}
For example $A^\la = k t^\la$ for $\la=(\la_1, \ldots, \la_n) \in
\ZZ^n$ and $t^\la= t_1^{\la_1}\, t_2^{\la_2} \cdots t_n^{\la_n}$.
Conversely, let $q=(q_{ij})$ be a $n\times n$-matrix over the field
$k$ whose entries satisfy (\ref{eq:quantor2}), then the associative
unital algebra $k_q$ defined by generators $t_i, t_i^{-1}$ and
relations (\ref{eq:quantor1}) is a $\ZZ^n$-torus. It is customary to
call $k_q$ a {\it quantum $\ZZ^n$-torus\/} or simply a {\it quantum
torus\/} if the grading group is not important, since $k_q$ can be
viewed as a quantization of the coordinate ring of the $n$-torus
$(k^\times)^n$, i.e. the Laurent polynomial ring in $n$ variables.
Observe that $k_q$ is a Laurent polynomial ring iff all $q_{ij}=1$.

A quantum torus has a graded involution iff all $q_{ij} = \pm 1$
(\cite[\S2]{AG}). In this case, an example of a well-defined
involution is the {\it reversal involution\/} $\pi_{\rm rev}$ with
respect to the generating set $\{ t_1^{\pm 1}, \ldots, t_n^{\pm
1}\}$:
\begin{equation}\label{eq:quantor3}
   \pi_{\rm rev}\big(t_1^{\la_1}\, t_2^{\la_2} \cdots t_n^{\la_n}\big) = t_n^{\la_n} t_{n-1}^{\la_{n-1}}
                \cdots t_1^{\la_1}
\end{equation}
}\end{example}

The following lemma is immediate from the definitions above and the
multiplication rules of hermitian matrix systems.

\begin{lemma} \label{hermtor} Let $\rmH=\rmH_2(A,A_0,\pi,{}^-)$ be a
hermitian matrix system.

{\rm (a)} The following are equivalent: \begin{enumerate}

\item[{\rm (i)}] Every homogeneous $0\neq m\in M=A[12]$ is invertible
in $\rmH$,

\item[{\rm (ii)}] $A$ is division-graded,

\item[{\rm (iii)}] $\rmH$ is division-triangulated.
\end{enumerate} \sm

{\rm (b)} Let $k$ be a field. Then $\rmH$ is a $\La$-triangulated
Jordan triple torus iff $A$ is a $\La$-torus.
\end{lemma}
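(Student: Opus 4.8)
The plan is to prove (a) by running the cycle (i)$\Rightarrow$(ii)$\Rightarrow$(iii)$\Rightarrow$(i), and to deduce (b) by matching the defining conditions of the two torus notions through the graded module identifications $M = A[12] \cong A$ and $J_i = A_0[ii] \cong A_0$. The single computation driving everything is the block form of the quadratic operator of an off-diagonal element, obtained by multiplying out $P(a[12])Y = (a[12])\,\overline{Y}\,(a[12])$ as $2\times 2$ matrices: for $Y = y_0[11] + y[12] + z_0[22]$ one gets
\[
 P(a[12])\big(y_0[11]+y[12]+z_0[22]\big)
   = (a\overline{z_0}a^\pi)[11] + (a\overline{y}^\pi a)[12] + (a^\pi\overline{y_0}a)[22].
\]
Thus $P(a[12])$ respects the Peirce blocks, acting on $M$ by $y\mapsto a\overline{y}^\pi a$, from $J_1$ to $J_2$ by $y_0 \mapsto a^\pi\overline{y_0}a$, and from $J_2$ to $J_1$ by $z_0\mapsto a\overline{z_0}a^\pi$; the same multiplication records $Q_1(a[12]) = aa^\pi[11]$ and $Q_2(a[12]) = a^\pi a[22]$, both lying in $A_0$ by ampleness.

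For (i)$\Rightarrow$(ii) I would take $0\ne a\in A^\ga$, form $0 \ne m = a[12]$, and use that its invertibility makes $P(m)$ bijective; by the block structure above the $M$-block $y\mapsto a\overline{y}^\pi a$ is then bijective, in particular surjective. Choosing $y$ with $a\overline{y}^\pi a = 1$ and setting $b = \overline{y}^\pi$ gives $aba = 1$, so $a$ has a right inverse $ba$ and a left inverse $ab$ and is therefore invertible in the unital associative algebra $A$, with inverse homogeneous of degree $-\ga$. Hence $A$ is division-graded. I expect this implication to be the only real content of the lemma: one must recover invertibility in $A$ from invertibility in the Jordan system, and the clean route is through the $M$-block together with the elementary fact that $aba=1$ forces two-sided invertibility. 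Arguing instead through $Q_1(m)\in A_0$ is awkward, precisely because the ample subspace $A_0$ may be a proper part of $A$.

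For (ii)$\Rightarrow$(iii), assuming $A$ division-graded, I would first note that $a_0^{-1}\in A_0$ for nonzero homogeneous $a_0\in A_0$: since $a_0^\pi = a_0$ we have $(a_0^{-1})^\pi = a_0^{-1}$, so ampleness applied to $a := a_0^{-1}\in A$ gives $a_0^{-1} = a_0^{-1}a_0 a_0^{-1} = a\,a_0\,a^\pi \in A_0$. With this each Peirce block of $P(a[12])$ is a bijection of $A$ or $A_0$ (with inverses of the shape $z_0\mapsto \overline{(a^{-1})^\pi z_0 a^{-1}}$, again landing in $A_0$ by ampleness), so every nonzero homogeneous $m\in M$ is invertible; the same bijectivity argument applied to $P(a_0[ii])\colon b_0\mapsto a_0\overline{b_0}a_0$ shows $J_1$ and $J_2$ are division-graded. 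Hence $\rmH$ is division-triangulated. The implication (iii)$\Rightarrow$(i) is immediate, since division-triangulation includes the invertibility of every nonzero homogeneous element of $M$.

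Finally, for (b) I would match conditions. By (a), $\rmH$ is division-triangulated iff $A$ is division-graded. The identifications $M^\la \cong A^\la$ and $J_i^\la \cong A_0^\la$ give $\dim_k M^\la = \dim_k A^\la$ and $\dim_k J_i^\la = \dim_k A_0^\la \le \dim_k A^\la$, so the constraints $\dim_k J_i^\la\le 1$ and $\dim_k M^\la \le 1$ for all $\la$ hold iff $\dim_k A^\la \le 1$. For the support, $A_0\subseteq A$ forces $\scL = \supp_\La J_i \subseteq \supp_\La A = \scS = \supp_\La M$, whence $\supp_\La \rmH = \scL\cup\scS = \supp_\La A$; since $A$ is then division-graded its support is a subgroup, so $\supp_\La \rmH$ generates $\La$ iff $\supp_\La A = \La$. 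Combining these, $\rmH$ is a $\La$-triangulated Jordan triple torus iff $A$ is a $\La$-torus.
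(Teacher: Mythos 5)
Your proposal is correct and follows essentially the same route as the paper: the paper's (very terse) proof also runs (i)$\Rightarrow$(ii) via the multiplication rules of $\rmH$, establishes (ii)$\Rightarrow$(iii) by the identical key observation that $a_0^{-1}=a_0^{-1}a_0(a_0^{-1})^\pi\in A_0$ by ampleness, notes (iii)$\Rightarrow$(i) is immediate, and derives (b) from (a). Your write-up merely supplies the details the paper leaves implicit, in particular the block computation of $P(a[12])$ and the $aba=1$ argument recovering two-sided invertibility of $a$ in $A$.
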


\begin{proof} The implication (i) $\Rightarrow$ (ii) follows from
the multiplication rules of $H$. If $A$ is division-graded, a
nonzero homogeneous element $a_0\in A_0$ is invertible in $A$, say
with inverse $b_0$. We have $b_0\in \rmH(A,\pi)$ since $a_0\in
\rmH(A,\pi)$. But then $b_0 = b_0 a_0 b_0^\pi \in A_0$, whence $A_0$
is division-graded, proving (iii). The implication (iii)
$\Rightarrow$ (i) is immediate, and (b) follows from (a).
\end{proof}

\begin{example}\label{hermtorqun} {\rm As a special case of Lem.~\ref{hermtor}
we get: $\rmH_2(A, A_0, \pi, {}^-)$ is a $\ZZ^n$-triangulated Jordan
triple torus iff $A$ is a quantum $\ZZ^n$-torus.}\end{example}


\section{Clifford systems} \label{sec:triclis}

In this section we introduce the second model of a
graded-triangulated Jordan triple system, the ample Clifford systems
(Def.~\ref{cli}), and we characterize them within the class of
triangulated Jordan triple systems in Th.~\ref{clifford}. We
describe the graded-(semi)prime, graded-strongly prime and
graded-simple Clifford systems in Prop.~\ref{cliffordsimpl} and the
division-triangulated and tori among the Clifford systems in
Cor.~\ref{clifforddivcor}. \sm

\begin{definition} \label{defqfgen} {\it Quadratic form triples.}
 Let $D=\bigoplus_{\la \in \La}D^\la$ be a
graded  unital commutative associative $k$-algebra endowed with an
involution $^-$ of degree $0$, i.e., $\overline{D^\la}=D^\la$ for
all $\la \in \La$. If
\begin{enumerate}
\item[{\rm (i)}] $V$ is a  graded  $D$-module, i.e., $V=\bigoplus_{\la \in \La}V^\la$
is a decomposition into $k$-submodules such that $d^\lambda  x^\ga
\in V^{\la +\ga}$ for $d^\la\in D^\la$, $x^\ga\in V^\ga$ and all
$\la,\ga\in \La$,
\item[{\rm (ii)}] $q:V\rightarrow
D$ is a graded $D$-quadratic form (cf. Def.~\ref{quad}), and
\item[{\rm (iii)}] $S:V\rightarrow V$ is a
hermitian isometry of order $2$ and degree $0$, i.e.,
$S(dx)=\overline{d}S(x)$ for $d\in D$,
$q\big(S(x)\big)=\overline{q(x)}$, $S^2=\Id$ and $S(V^\la) = V^\la$,
\end{enumerate}
then $V$ becomes a Jordan triple system, denoted $J(q,S)$ and called
a {\it quadratic form triple}, by defining $P(x)y=q\big(x,
S(y)\big)x-q(x)S(y) $ for $x,y \in V$ (see for example \cite[\S1,
Ex.~1.6]{Nbook}). Clearly $J(q,S)$ is  graded by $\La$.  We note for
later use:
\begin{equation}\label{eq:qfinvert}
\text{\it $x\in J(q,S)$ is invertible $\iff q(x) \in D$ is
invertible, }
\end{equation}
and then $x^{-1} = q(x)^{-1} S(x)$.
\end{definition}

\begin{definition}\label{cli} {\it Ample Clifford
systems $\AC(q, S, D_0)$.} We consider $(M,q,\allowbreak S,u)$,
where $(M,q,S)$ satisfy (i)--(iii) of Def.~\ref{defqfgen} above and
in addition
\begin{enumerate}
\item[{\rm (iv)}] there exists $u\in M^0$ with $q(u)=1$ and $S(u)=u$.
\end{enumerate}
We then define $(\tilde M, \tilde{q},\tilde{S})$ as follows:
\begin{enumerate}
\item[{\rm (i)}$'$] $\tilde M :=De_1\oplus M\oplus D e_2$, where $De_1 \op
De_2$ is a free graded $D$-module with basis $(e_1, e_2)$ of degree
$0$,

\item[{\rm (ii)}$'$] ${\tilde q} : \tilde M \to D$ is the quadratic form given by
$\tilde{q}(d_1e_1\oplus m \oplus d_2 e_2)=d_1d_2 -q(m)$, whence
$De_1 \op De_2$ is a hyperbolic plane orthogonal to $M$, and

\item[{\rm (iii)}$'$] $\tilde{S}:\tilde M\rightarrow \tilde M$ is the map $
d_1e_1\oplus m \oplus d_2 e_2 \mapsto \overline{d_2}e_1\oplus -S(m)
\oplus \overline{d_1}e_2$.
\end{enumerate}

It is then easily checked that $(\tilde M, \tilde q, \tilde S)$ also
satisfies the conditions (i)--(iii) above, and therefore yields a
quadratic form triple, called {\it full Clifford system} and denoted
$\FC(q,S)$. Its multiplication is given by
\begin{equation}
P(c_1e_1\oplus m\oplus c_2e_2)\, (b_1e_1\oplus n\oplus
b_2e_2)=d_1e_1\oplus p\oplus d_2e_2,  
\label{eq:cli(1)}
\end{equation} 
where
\begin{eqnarray*} 
d_i &= &c_i^2\, \overline{b_i}+c_i\,
q\big(m,S(n)\big)+\overline{b_j}\, q(m)
   \\
p &=& [c_1\overline{b_1}+c_2\overline{b_2}+q\big(m,S(n)\big)]m+
[c_1c_2-q(m)]S(n)
\end{eqnarray*} and
\begin{eqnarray}
 &\{ c_1e_1\oplus m\oplus c_2e_2\,,\, b_1e_1\oplus n\oplus
    b_2e_2\, ,\,
c_1'e_1\oplus m'\oplus c_2'e_2\}   \nonumber \\
& = d_1e_1\oplus p\oplus d_2e_2,  \label{eq:cli(2)}  
\end{eqnarray}
where
\begin{eqnarray*}
d_i &=& q\big(c_im'+c_i'm\,,\,S(n)\big)\, +\, b_j\, q(m,m')
    +2c_i\, c_i'\, \overline{b_i}\\ 
p&=&[c_1\overline{b_1}+c_2\overline{b_2}+q\big(m,S(n)\big)]\, m'+
      [c_1'\overline{b_1}+c_2'\overline{b_2}+q\big(m',S(n)\big)]\, m
       \\ 
&\quad  & +\, [c_1c_2'+c_1'c_2-q(m,m')]S(n).
\end{eqnarray*} 
Note that  $\FC(q,S)$ is graded-triangulated by $(u; e_1,e_2)$. \sm

As already observed in \cite[3.5]{MNSimple}, in general we need not
take the full Peirce spaces $De_i$ in order to get a
graded-triangulated Jordan triple system. Indeed, let us define a
{\it Clifford-ample subspace of $(D,\; \bar{}\;, q)$\/}  as a
 graded $k$-submodule $D_0$ of $D$, such that $D_0=\overline{D_0}$, $1\in D_0$ and
$D_0q(M)\subseteq D_0$. Then
$$M_0 := D_0\,e_1\oplus M\oplus D_0\,e_2$$
is a graded subsystem of the full Clifford system $\FC(q,S)$
containing the triangle $(u; e_1,e_2)$. Hence it is a
graded-triangulated Jordan triple system, called an {\it ample
Clifford system} and denoted $\AC(q,S, D_0)$ or $\AC(q,M,S,
\allowbreak D, \allowbreak {}^-,D_0)$ if more precision is
necessary. Note that $J_0=\AC(q, S,\allowbreak D_0)$ is an outer
ideal of the full Clifford system $J=\FC(q,S)$.

We point out that $(D, \Id, \;\bar{}\;)$ is a  graded associative
coordinate system in the sense of Def.~\ref{examplehermi}, and that
a Clifford-ample subspace $D_0$ is in particular $(\Id,
\;\bar{}\;)$-ample. Hence ample Clifford systems are full in
characteristic $\neq 2$, which here means $D_0=D$.

Our derived operations of $\S\ref{sec:gr-triang JTS general}$ on
$J_0$ are
\begin{eqnarray*}
\overline{d_0e_1\oplus m\oplus c_0e_2} &= & \overline{d_0} e_1\oplus
S(m)\oplus \overline{c_0}e_2, \\ (d_0e_1\oplus m\oplus c_0e_2)^* &=&
c_0e_1\oplus (q(u,m)u-m)\oplus d_0e_2.
\end{eqnarray*}
\end{definition}

\begin{theorem} \label{clifford} {\rm \textsc{Clifford
coordinatization theorem}}. {\rm (\cite[3.6, 3.10]{MNSimple} for
$\La=0$)} Let $J=J_1\oplus M\oplus J_2$ be a  graded Jordan triple
system which is faithfully triangulated by $(u; e_1,e_2)$.  For
$i=1,2$, define
\begin{enumerate}
\item[{\rm (i)}] $C_i$ as the subalgebra of End$_k(M)$ generated by
$L(J_i)$,
\item[{\rm (ii)}] $\Gamma_i (x_i;m):=L\big(T_i(x_i\cdot m)\big)-L\big(T_i(m)\big)L(x_i)\in
C_i$ for $x_i\in J_i, m\in M$,
\item[{\rm (iii)}] $\Delta_i (x_i ;m)=L(P(m)P(u)x_i)-L\big(Q_i(m)\big)L(x_i)\in
C_i$,
\item[{\rm (iv)}] $N_0=\{n_0\in M: \, \Ga_i (J_i; n_0)=0,\,
\,i=1,2\}$,
\item[{\rm (v)}] $K_i=\{k_i\in J_i: \, \Ga_i (k_i; C_iN_0)=0\}$,
\item[{\rm (vi)}] $N^{\rm gr}=\bigoplus_{\la \in \La}(N\cap M^\la)$, for $N=\{n\in
M: \, \De_i (J_i; n)=\De_i (J_i; n,N_0)= \Ga_i (T_i(n); C_iN_0)=0,\,
\,i=1,2\}\subseteq N_0$, i.e., $N^{\rm gr}$ is the greatest
 graded  submodule of $N$.
\end{enumerate}
Then
$$J_q=K_1\oplus N^{\rm gr}\oplus K_2$$
is a graded subsystem of $J$ which is faithfully triangulated by
$(u; e_1,e_2)$ and graded isomorphic to the
 ample Clifford system $\AC(q, N^{\rm gr},
\allowbreak S, D, {}^-,D_0)$ under the map
$$x_1\oplus n\oplus x_2\mapsto L(x_1) \oplus n\oplus L(x_2^*),$$
where $D_0=L(K_1)$, $D$ is the subalgebra of End$_k(N^{\rm gr})$
generated by $D_0$, $\overline{c}=P(e)\circ c \circ P(e)|_{N^{\rm
gr}}$, $q(n)=L\big(Q_1(n)\big)$, and $S(n)=P(e)n$.
 The above isomorphism maps the triangle of $J$ onto
the standard triangle of $\AC(q, N^{\rm gr}, \allowbreak S, D,
{}^-,D_0)$.

Moreover, $J=J_q$ as  graded  triple systems if and only if
$\Delta_1(J_1 ;M)\equiv 0$. In particular, if $u$ is $C_1$-faithful
and $(x_1-x_1^*)\cdot m=0$, for all $x_1\in J_1$, $m\in M$, then
$\Delta_1(J_1 ;M)\equiv 0$ and so $J=J_q$.
\end{theorem}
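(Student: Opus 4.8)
The plan is to deduce the statement from its ungraded counterpart \cite[3.6, 3.10]{MNSimple} and then to verify that every object produced there respects the $\La$-grading. Since $(u; e_1, e_2)\subseteq J^0$ and the operators $L$, $T_i$, $Q_i$, $P(e)$, $^-$ and $^*$ are all graded (\S\ref{sec:gr-triang JTS general}), the generators $L(J_i^\la)$ of $C_i$ lie in $\End_k^\la(M)$, so each $C_i=\bigoplus_\la C_i^\la$ is a graded subalgebra of $\End_k^\La(M)$, exactly as shown for $C=C_1$ in \S\ref{sec:gr-triang JTS general}. Consequently $\Ga_i$ is graded and bilinear, with $\Ga_i(x_i^\la; m^\mu)$ homogeneous of degree $\la+\mu$, while $\De_i$ is graded and \emph{quadratic} in its second variable, with $\De_i(x_i^\la; m^\mu)$ homogeneous of degree $\la+2\mu$. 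First I would record the easy consequence that $N_0$ and the $K_i$ are graded submodules: each is cut out by the vanishing of a family of graded \emph{linear} maps, so the homogeneous components of any of its elements lie in it again.

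The one genuinely nonformal point is that $N$ itself need not be graded, which is exactly why the theorem is phrased with $N^{\rm gr}$. Among the defining conditions of $N$, those coming from $\Ga_i(T_i(\cdot); C_iN_0)$ and from the linearizations $\De_i(J_i; \cdot, N_0)$ are linear, hence grading-compatible, but the condition $\De_i(J_i; n)=0$ is \emph{quadratic} in $n$, since $\De_i(x_i;m)=L(P(m)P(u)x_i)-L(Q_i(m))L(x_i)$ with $P(m)$ and $Q_i(m)$ quadratic. As with the radical of a graded quadratic form in Def.~\ref{quad}, the vanishing of $\De_i(x_i;n)$ for a nonhomogeneous $n$ does not split into homogeneous components when $k$ has $2$-torsion. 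Passing to $N^{\rm gr}$, the greatest graded submodule of $N$, is the correct remedy.

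The core of the argument is to transfer the ungraded conclusions to $N^{\rm gr}$. By \cite[3.6, 3.10]{MNSimple}, $K_1\oplus N\oplus K_2$ is an ungraded subsystem, faithfully triangulated by $(u; e_1, e_2)$ and isomorphic to $\AC(q, N, S, D, {}^-, D_0)$ under the stated map. To see that $J_q=K_1\oplus N^{\rm gr}\oplus K_2$ is a \emph{graded} subsystem I would check the five conditions of \ref{sub} on its homogeneous pieces. Three of them, $\overline{K_i}=K_i$, $P(K_i)K_i\subseteq K_i$ and $P(N^{\rm gr})K_i\subseteq P(N)K_i\subseteq K_j$, are inherited directly from the ungraded subsystem. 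The remaining two, $K_i\cdot N^{\rm gr}\subseteq N^{\rm gr}$ and $\overline{N^{\rm gr}}=N^{\rm gr}$, use the defining maximality of $N^{\rm gr}$: on the one hand $K_i\cdot N^{\rm gr}\subseteq K_i\cdot N\subseteq N$ and $\overline{N^{\rm gr}}\subseteq\overline{N}=N$ by the ungraded result; on the other hand $K_i\cdot N^{\rm gr}$ and $\overline{N^{\rm gr}}$ are graded submodules, because $K_i$, $N^{\rm gr}$, the product $\cdot$ and $^-$ are all graded of the appropriate degree, so each lies in $N^{\rm gr}$, and $^-$ being of period $2$ forces $\overline{N^{\rm gr}}=N^{\rm gr}$. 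Faithful triangulation of $J_q$ is inherited from that of $K_1\oplus N\oplus K_2$, since it depends only on $K_1$ and $u$.

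Finally, the stated map restricts to $J_q$, sending $x_1^\la\mapsto L(x_1^\la)\in D_0^\la e_1$, $n\mapsto n$, and $x_2^\la\mapsto L((x_2^\la)^*)\in D_0^\la e_2$; here I use that $^*$ is graded of degree $0$ with $(K_2^\la)^*=K_1^\la$ and that $L(J_i^\la)\subseteq\End_k^\la(M)$. Thus the map is homogeneous of degree $0$, hence a graded isomorphism onto $\AC(q, N^{\rm gr}, S, D, {}^-, D_0)$, whose data $D_0=L(K_1)$, $q(n)=L(Q_1(n))$ and $S=P(e)|_{N^{\rm gr}}$ inherit their graded character from the ambient operators; it carries the triangle $(u; e_1, e_2)$ to the standard one because $L(e_i)|_{N^{\rm gr}}=\Id=1_D$ by \ref{1}. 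For the last assertion, $\De_1(J_1; M)\equiv 0$ forces $N=M$ and $K_i=J_i$ by the ungraded theorem, and since $M$ is graded this gives $N^{\rm gr}=M$ and $J_q=J$; conversely $J_q=J$ forces $N^{\rm gr}=M$, hence $N=M$ and $\De_1(J_1; M)\equiv 0$. The ``in particular'' clause follows verbatim from \cite[3.10]{MNSimple}, since $C_1$-faithfulness of $u$ together with \ref{7} turns $(x_1-x_1^*)\cdot m=0$ into $\Ga_1(J_1; M)=0$ and thence into $\De_1(J_1; M)\equiv 0$. The principal obstacle throughout is the bookkeeping of $N$ versus $N^{\rm gr}$; once the maximality of the greatest graded submodule is exploited in the two critical closure relations, everything else is a graded refinement of the ungraded computation.
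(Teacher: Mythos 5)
Your proposal is correct and follows essentially the same route as the paper: invoke \cite[3.6, 3.10]{MNSimple} for the ungraded statement, observe that $N_0$ and the $K_i$ are graded because their defining identities are linear, and use the maximality of $N^{\rm gr}$ together with \ref{sub} to close the two relations $\overline{N^{\rm gr}}=N^{\rm gr}$ and $K_i\cdot N^{\rm gr}\subseteq N^{\rm gr}$. Your converse for the final equivalence ($J_q=J\Rightarrow N^{\rm gr}=M\Rightarrow N=M$ by $N^{\rm gr}\subseteq N\subseteq M$) is even slightly more direct than the paper's linearization of $\Delta_1$ against $N_0$, and both are valid.
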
\begin{proof} Let $J$ be a  graded  Jordan triple system
faithfully triangulated by $(u; e_1,e_2)$. By \cite[3.10]{MNSimple},
$K_1\oplus N \oplus K_2$ is a subsystem of $J$ faithfully
triangulated by $(u;e_1,e_2)$. Since $N_0$ and $K_i$, $i=1,2$, are
 graded  (all the defining identities are linear),
 $J_q$ is the greatest  graded  submodule of
$K_1\oplus N \oplus K_2$. Clearly $(u;e_1,e_2)\in J_q^0$. To see
that $J_q$ is also a  graded   subsystem of $J$ faithfully
triangulated by $(u;e_1,e_2)$, it is in view of \ref{sub} enough to
prove $\overline{N^{\rm gr}}=N^{\rm gr}$ and  $K_i\cdot N^{\rm
gr}\subseteq N^{\rm gr}$. But this follows directly {f}rom
$\overline{N}=N$, $K_i\cdot N\subseteq N$, $J_i^\la\cdot M^\ga
\subseteq M^{\la +\ga}$ and the fact that $^-$ is homogeneous of
degree $0$. On the other hand, since $\Delta_1(K_1 ;N^{\rm
gr})\equiv 0$  by definition, we have by \cite[3.6]{MNSimple} that
$J_q$ is isomorphic to the ample Clifford system $\AC(q, N^{\rm gr},
\allowbreak S, D, {}^-,D_0)$ under the map and data specified in the
theorem. Clearly the isomorphism is homogeneous of degree $0$ and
preserves the triangles.

Recall {f}rom \cite[3.10]{MNSimple} that $J=K_1\oplus N \oplus K_2$
if and only if $\Delta_1(J_1 ;M)\equiv 0$. In this case $N=M$ is
 graded   whence $J_q=J$. Conversely, if $J_q=J$ then
$\Delta_1(J_1 ;M^\la)\equiv 0$, which implies that $\Delta_1(J_1
;M)\equiv 0$: $\Delta_1(J_1 ;m^\la + m^\ga)= \Delta_1(J_1 ;m^\la ,
m^\ga)\subseteq \Delta_1(J_1 ;m^\la, N_0)\equiv 0$ since $N\subseteq
N_0$. Finally, if $u$ is $C_1$-faithful and $(x_1-x_1^*)\cdot m= 0$,
then $\Delta_1(J_1 ;M)\equiv 0$ by \cite[3.8]{MNSimple} and so
$J=J_q$.
\end{proof}

\begin{proposition} \label{cliffordsimpl} Let $J=\AC(q,M, S,D, {}^-, D_0)
=D_0e_1\oplus M\oplus D_0e_2$ be an ample Clifford system for which
$D$ acts faithfully on $M$.
\begin{enumerate}
\item[{\rm (i)}] If $J$ is graded-(semi)prime, then $(D, ^-)$ is
graded-(semi)prime. \sm
\item[{\rm (ii)}] The following are equivalent:

\begin{enumerate}
 \item[{\rm (a)}] $J$ is graded-nondegenerate,

 \item[{\rm (b)}] $q$ is graded-nondegenerate and $J$ is
graded-semiprime,

 \item[{\rm (c)}] $q$ is graded-nondegenerate and $(D, ^-)$ is
graded-semiprime.   \end{enumerate} \sm

\item[{\rm (iii)}] $J$ is graded-strongly prime iff $q$ is
graded-nondegenerate and $(D, ^-)$ is graded-prime. \sm

\item[{\rm (iv)}] {\rm (\cite[3.14, 3.15]{MNSimple} for
$\La=0$)} The following are equivalent:
\begin{enumerate}
 \item[{\rm (a)}]  $J$ is graded-simple,

 \item[{\rm (b)}] $q$ is graded-nondegenerate and $(D, ^-)$ is
graded-simple,

 \item[{\rm (c)}] $J$ is graded isomorphic to one of the following:
\begin{enumerate}
\item[{\rm (I)}] $\AC(q, S, F_0)$ for a graded-nondegenerate $q$ over a
graded-field  $F$ with Clifford-ample subspace $F_0$,  or
\item[{\rm (II)}] a polarized $\AC(q, S, F_0)\oplus \AC(q, S, F_0)$, where
$\AC(q, S,F_0)$ is as in {\rm (I)}.
\end{enumerate}
\end{enumerate}
In this case, $u$ is $D$-faithful.
\end{enumerate}
\end{proposition}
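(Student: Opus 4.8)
The plan is to follow the structure of \cite[1.15]{MNSimple} and Cor.~\ref{hermitiansimpl}, exploiting the explicit Peirce-type description of the ample Clifford system $J=\AC(q,M,S,D,{}^-,D_0)$ and the characterization of graded-simplicity in Prop.~\ref{peirce}. The central translation device is the dictionary between graded ideals of $J$ and $(\;\bar{}\;)$-invariant substructures of $D$: concretely, by the multiplication formulas \eqref{eq:cli(1)} and \eqref{eq:cli(2)} a graded ideal $I$ of $J$ has the form $I=D_0'e_1\oplus M'\oplus D_0'e_2$ where $D_0'\subseteq D_0$ is a $(\;\bar{}\;)$-invariant graded submodule and $M'\subseteq M$ is a graded $D$-submodule, with compatibility conditions coming from $q$. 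I would first record this ideal-correspondence as the analogue of Prop.~\ref{relideals}, since it is the engine driving all four parts.

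For part (i) and (ii), the key is Prop.~\ref{peirce}: since $J$ is graded-triangulated with $J_i=D_0e_i$ and $Q_i$ essentially encoding $q$, graded-semiprimeness of $J$ is equivalent to graded-nondegeneracy of $J$ (via Prop.~\ref{nonde} once graded-simplicity enters), and graded-nondegeneracy of $J$ forces $Q_i$, hence $q$, to be graded-nondegenerate. The passage from graded-(semi)primeness of $J$ to that of $(D,{}^-)$ is obtained by pushing a nonzero graded ideal of $J$ down to its diagonal coordinate $D_0'$ and thence to a $(\;\bar{}\;)$-invariant graded ideal of $D$, using faithfulness of the $D$-action on $M$ to ensure nontriviality is preserved. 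For the equivalences (a)$\Leftrightarrow$(b)$\Leftrightarrow$(c), I would combine graded-nondegeneracy of $q$ (controlling the off-diagonal radical $\R Q_i$ via Lem.~\ref{inheritance}) with graded-semiprimeness of $(D,{}^-)$, invoking Remark~\ref{semip} to reduce $\frS=\{\;\bar{}\;\}$-graded-semiprimeness of $D$ to ordinary graded-semiprimeness. Part (iii) is then the graded-prime refinement of the same argument.

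Part (iv) is the heart. The implication (a)$\Rightarrow$(b) follows from Prop.~\ref{peirce}(ii) together with the ideal-correspondence: a graded ideal of $(D,{}^-)$ would lift to a graded ideal of $J$, so graded-simplicity of $J$ forces $(D,{}^-)$ graded-simple and (by (ii)) $q$ graded-nondegenerate. For (b)$\Rightarrow$(c), I would apply Cor.~\ref{patrick}, which classifies graded-simple $(D,{}^-)$ for commutative $D$ into exactly two cases: either $D$ is itself graded-simple (equivalently a graded-field, by Def.~\ref{asstordef}), giving case (I); or $D\cong_\La B\boxplus B$ with the exchange automorphism for a commutative graded-simple $B$, and one checks that the exchange automorphism forces $J$ to decompose as a polarized $\AC(q,S,F_0)\oplus\AC(q,S,F_0)$ over $F=B$, giving case (II). The converse (c)$\Rightarrow$(a) is verified directly from the two normal forms. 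The final assertion that $u$ is $D$-faithful follows because graded-nondegeneracy of $q$ makes the radical of the $D$-action on $M$ trivial.

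The step I expect to be the main obstacle is establishing the clean ideal-correspondence in full graded generality, i.e.\ showing that \emph{every} graded ideal of $J$ is determined by its diagonal coordinate $D_0'$ and that this matches a $(\;\bar{}\;)$-invariant graded ideal of $D$ faithfully. In the ungraded case this is \cite[3.14, 3.15]{MNSimple}, but in the graded setting one must be careful that the off-diagonal component $M'$ of an ideal, controlled by $q$ and the $D$-module structure, does not introduce homogeneous pieces outside the diagonal's reach; faithfulness of the $D$-action and graded-nondegeneracy of $q$ are precisely what one needs to rule this out, and verifying their interplay across all homogeneous degrees is the delicate point.
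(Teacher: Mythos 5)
Your overall strategy coincides with the paper's: write every graded ideal of $J$ in the form $B_0e_1\oplus N\oplus B_0e_2$ using invariance under $P(e_i)$, $P(e_1,e_2)$ and $P(u)$, set up the correspondence $B\mapsto (B\cap D_0)e_1\oplus BM\oplus(B\cap D_0)e_2$ with $(\;\bar{}\;)$-invariant graded ideals of $D$ (faithfulness of the $D$-action giving injectivity), feed Prop.~\ref{peirce} and Rem.~\ref{semip} into parts (i)--(iii), and use Cor.~\ref{patrick} for the dichotomy in (iv). However, two steps are not adequately covered. First, the hard direction of (iv) is precisely the implication ``$q$ graded-nondegenerate and $(D,{}^-)$ graded-simple $\Rightarrow J$ graded-simple,'' which you dispose of as ``verified directly from the two normal forms.'' It is not formal: given a proper graded ideal $I=B_0e_1\oplus N\oplus B_0e_2$, one must show that $B_0$ contains no invertible element of $D$ --- if $b_0^{-1}$ existed, then $\overline{b_0^{-2}}=q(\overline{b_0^{-1}}u)\in D_0$ and $e_1=P(b_0e_1)(\overline{b_0^{-2}}e_1)\in I$, forcing $I=J$ --- whence $B_0=0$ when $D$ is division-graded, and then $N=0$ by graded-nondegeneracy of $q$ (since $q(N)+q(N,M)\subseteq B_0=0$ puts each $N^\la$ in $\GR q$). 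In the polarized case $D=A\boxplus A$ one further needs $\bar{}\,$-invariance of $B_0$ to pass from $(a,0)\in B_0$ to the non-invertible element $(a,a)\in B_0$ and conclude $a=0$. Without this computation your cycle (a)$\Rightarrow$(b)$\Rightarrow$(c)$\Rightarrow$(a) does not close.

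Second, your justification of the final assertion --- ``$u$ is $D$-faithful because graded-nondegeneracy of $q$ makes the radical of the $D$-action on $M$ trivial'' --- is wrong as stated: triviality of $\mathrm{Ann}_D(M)$ is the standing hypothesis, not a consequence of $q$, and in any case it does not imply $\mathrm{Ann}_D(u)=0$. The correct argument is that $Z=\{d\in D: du=0\}$ is a proper graded ideal of $(D,{}^-)$ (it is $\bar{}\,$-invariant since $S(du)=\overline{d}\,u$, and $1\notin Z$), so it vanishes by the graded-simplicity of $(D,{}^-)$ just established. (Alternatively, $du=0$ for homogeneous $d$ gives $d^2=q(du)=0$, and homogeneous nilpotents vanish in a graded-semiprime commutative algebra; but that is not the argument you gave.) A smaller point: in (i) the direction you need is to lift a graded ideal of $(D,{}^-)$ up to $J$, not to push an ideal of $J$ down to its diagonal, and in (ii) graded-semiprimeness of $J$ is not equivalent to graded-nondegeneracy --- the missing link is that $(D,{}^-)$ graded-semiprime forces $J_1=D_0e_1$ graded-nondegenerate, since a homogeneous trivial element $d_0$ of $J_1$ satisfies $d_0^2=0$.
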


\begin{proof}
We will first establish some preliminary results on the structure of
graded ideals of $J$. Thus, let $I$ be a graded ideal of $J$. We
will use the multiplication formulas (\ref{eq:cli(1)}),
(\ref{eq:cli(2)}) to evaluate the possibilities for $I$. First,
invariance of $I$ under $P(e_i)$, $P(e_1,e_2)$ and $P(u)$ shows that
\begin{gather} I=B_0e_1\oplus N\oplus B_0e_2,\tag{$1$}\end{gather}
for some graded $k$-submodules $B_0 \subseteq D_0$, $N\subseteq M$.
{F}rom $P(e_1)I\subseteq I$ we obtain $B_0 =\overline{B_0}$.
Furthermore, we claim
\begin{gather} q  \text{ graded-nondegenerate and }\, B_0=0 \Rightarrow I=0. \tag{$2$}\end{gather}
Indeed, in this case $P(N)e_2+\{N,e_2,M\}\subseteq B_0$ implies
$q(N)+q(N,M)\subseteq B_0=0$, whence $N^\la \subseteq \GR q$ for all
$\la \in \La$, and then $N=0$, therefore $I=0$. Next we claim
\begin{gather} (D, ^-) \text{ graded-simple}, \,  q  \text{ graded-nondegenerate}
\Rightarrow J \, \text{ graded-simple}.\tag{$3$}\end{gather} First
notice that by Cor.~\ref{patrick} either $D$ is a division-graded
algebra or is the direct sum of two copies of a division-graded
algebra with the exchange automorphism. Let $I$ be a proper graded
ideal of $J$ which we write in the form (1). Then $B_0\neq D$, since
$B_0=D$ would imply $e_1\in I$, which in turn would force
$e_2=P(u)e_1\in I$, $e_1+e_2\in I$ and then $I=J$. Now, as in the
proof of \cite[3.14]{MNSimple}, it  follows that no $b_0 \in B_0$ is
invertible in $D$: If $b_0^{-1}\in D$, then
$\overline{b_0^{-2}}=q(\overline{b_0^{-1}}u) \subseteq D_0$, since
$q(M)\subseteq D_0$, and
$e_1=b_0^2\overline{\overline{b_0^{-2}}}e_1=P(b_0e_1)(\overline{b_0^{-2}}e_1)\in
P(I)J\subseteq I$. Hence, if $D$ is division-graded, $B_0^\la=0$ for
all $\la \in \La$, and then $B_0=0$. If, otherwise, $D=A\boxplus A$
for a division-graded $A$ with the exchange automorphism, let
$b_0=(a,0)$ or $(0,a)$ be in $B_0$ for a homogeneous $a\in A^\la$.
Then $(a,a)=b_0+\overline{b_0}\in B_0$, and since $(a,a)$ is not
invertible in $D$, we get $a=0$ and then $B_0=0$, in which case
$I=0$ by (2).\sm
\begin{gather} (D, ^-) \, \text{ graded-(semi)prime and }\, q \,
 \text{ graded-nondegenerate }\Rightarrow  \tag{$4$} \\
 J \, \text{ graded-(semi)prime}.\notag\end{gather} We suppose $I, K$ are
graded ideals of $J$ with $I=K$ in the semiprime case, satisfying
$P(I)K=0$. By (1), we can write $I,K$ in the form $I=B_0e_1\oplus
N\oplus B_0e_2$ and $K=C_0e_1\oplus L\oplus C_0e_2$.
 {F}rom $P(B_0e_1)C_0e_1=0$ we get $B_0^2C_0=0$. If $D$ is
graded-(semi)prime (always in the semiprime case by
Rem.~\ref{semip}), then $B_0=0$ or $C_0=0$. If $D$ is not
graded-prime, then it easily follows that $D$ is a subdirect sum
$A\boxplus_{\rm s}A$ of two copies of a graded-prime algebra $A$
with the exchange automorphism. In this case, let $0\neq b_0\in
B_0$, $0\neq c_0\in C_0$ be homogeneous elements. Then, by
graded-primeness of $A$, $b_0^2c_0=0$ implies that $b_0=(b, 0),
c_0=(0, c)$ or $b_0=(0, b), c_0=(c,0)$ for homogeneous $b, c\in A$,
respectively. Without loss of generality, assume $b_0=(b, 0)$ and
$c_0=(0, c)$. Hence $b_0+\overline{b_0}=(b, b)\in B_0$ and $0=(b^2,
b^2)(0, c)=(0, b^2c)$, thus $b^2c=0$. But, again by the
graded-primeness of $A$ we have that  $b^2=0$ or $c=0$, that is,
 $b=0$ or $c=0$, which is a contradiction. Then $B_0=0$ or
$C_0=0$. Therefore $I=0$ or $K=0$ by (2).

For the proof of the other directions we again establish some
preliminary results. Let $B$ be a graded ideal of $(D, ^-)$. A
straightforward verification using the multiplication rules
(\ref{eq:cli(1)}), (\ref{eq:cli(2)}) shows that then
$$\tilde{B}:=(B\cap D_0)e_1\oplus BM\oplus (B\cap D_0)e_2$$ is a
graded ideal of $J$. Since $BM=0$ implies $B\subseteq {\rm
Ann}_D(M)=0$, it is clear that
$$\tilde{B}=0\Leftrightarrow B=0.$$ On the other hand, if
$\tilde{B}=J$, then $1\in D_0=B\cap D_0$, hence $B=D$. Then
$$\tilde{B}=J\Leftrightarrow B=D.$$
It now follows easily {f}rom the multiplication rules in
Def.~\ref{cli} that
\begin{gather} J\, \text{ graded-(semi)prime }\Rightarrow (D, ^-)\,
   \text{ graded-(semi)prime, and } \tag{$5$}\end{gather}
\begin{gather} J \, \text{ graded-simple }\Rightarrow (D, ^-) \,
\text{ graded-simple}. \tag{$6$}\end{gather}

\noindent For the proof of (ii) we also need
\begin{gather} (D, ^-) \, \text{ graded-semiprime }
\Rightarrow J_1 \, \text{ graded-nondegenerate}.
\tag{$7$}\end{gather} Indeed, if $d_0\in D_0$ is a homogeneous
trivial element of $J_1$, then $d_0^2D_0=0$. In particular,
$d_0^2=0$. But in a graded-semiprime commutative algebra, all
homogeneous nilpotent elements vanish, so $d_0=0$.

Finally, by using Prop.~\ref{peirce}(i) and the fact that $Q_1=q$ in
our situation we have
\begin{gather} J \, \text{ graded-nondegenerate }
\Leftrightarrow \tag{$8$}\\q \, \text{ graded-nondegenerate and }\,
J_1 \, \text{ graded-nondegenerate}.\notag\end{gather}

Now the proof of (i)-(iv) follows easily: (i) is (5). For (ii), the
fact that any graded-nondegenerate Jordan triple system is also
graded-semiprime together with (i), (7) and (8) yields: $J$
graded-nondegenerate (by (8)) $\Rightarrow$ $q$ graded-nondegenerate
and $J$ graded-semiprime (by (i)) $\Rightarrow$ $q$
graded-nondegenerate and $(D, ^-)$ graded-semiprime (by (7))
$\Rightarrow$ $q$ graded-non\-de\-ge\-ne\-rate and $J_1$
graded-nondegenerate (by (8)) $\Rightarrow$ $J$
graded-nondegenerate.

For (iii), we have by definition of graded-strongly primeness and
(i) and (ii) that $J$  graded-strongly  prime implies $q$
graded-nondegenerate and $(D, ^-)$ graded-prime. The converse
direction follows {f}rom (ii) and (4). Finally (iv) follows {f}rom
(3), (6) and graded-nondegeneracy of $J$ characterized by (8). Note
that then $u$ is $D$-faithful since $\{d\in D: du=0\}$ is a proper
graded ideal of $(D, ^-)$. The remaining statements in (iv) are an
immediate application of Cor.~\ref{patrick}.
\end{proof}

The assumption that $D$ acts faithfully on $M$ in the preceding
proposition and the following corollary will be automatic in the
application later on.

\begin{corollary}  \label{clifforddivcor} 
Let $J=\AC(q,S,D_0) = D_0e_1\oplus M\oplus D_0e_2$ be an ample
Clifford system with $D$ acting faithfully on $M$. \sm

{\rm (a)} The following are equivalent:
\begin{enumerate}
 \item[\rm (i)] Every nonzero homogeneous element of $M$ is invertible,

 \item[\rm (ii)]  $D$ is a graded-field and $q$ is {\rm graded-anisotropic} in
the sense that $0\ne q(m)\in D$ for every nonzero homogeneous $m\in
M$,
\item[\rm (iii)]  $J$ is division-triangulated.
\end{enumerate}
In this case $M$ is a free $D$-module. \ms

{\rm (b)} Let $k$ be a field. Then $J$ is a triangulated Jordan
triple torus iff

\begin{enumerate}

 \item[{\rm (I)}] $D$ is a torus, say with $\supp_\La D = \Ga$,
hence $D=k^t[\Ga]$ is a twisted group algebra, and

 \item[{\rm (II)}] $M$ is a free $D$-module with a homogeneous
$D$-basis $\{u_i: i\in I\}$, say $u_i \in M^{\de_i}$, with $q(u_i)
\ne 0$ and $(\de_i + \Ga) \ne (\de_j + \Ga) $ for $i\ne j$.
\end{enumerate}
If in this case $D=D_0$, then $\{u_i : i \in I\}$ is an orthogonal
basis: $q(u_i, u_j) = 0$ for $i\ne j$.
\end{corollary}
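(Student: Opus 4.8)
The plan is to reduce everything to the multiplication rules (\ref{eq:cli(1)}), (\ref{eq:cli(2)}), the invertibility criterion (\ref{eq:qfinvert}), and the Clifford-ampleness of $D_0$. The starting observation is the computation $Q_i(m)=P(m)e_j=q(m)e_i$ for $m\in M$, together with $q(M)\subseteq D_0$ (which holds since $1\in D_0$ and $D_0q(M)\subseteq D_0$). As $(M,q,S)$ is itself a quadratic form triple, (\ref{eq:qfinvert}) tells us that a homogeneous $0\ne m\in M$ is invertible iff $q(m)$ is invertible in $D$; I would first record that this matches invertibility of $m$ in $J$, i.e. of $Q_1(m)\oplus Q_2(m)$, which is where faithfulness of the $D$-action is used.

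For part (a) I would prove (i)$\Rightarrow$(ii)$\Rightarrow$(iii)$\Rightarrow$(i). For (i)$\Rightarrow$(ii): given a nonzero homogeneous $d\in D^\la$, faithfulness produces a homogeneous $m$ with $dm\ne0$; then $dm$ is invertible, so $q(dm)=d^2q(m)$ is invertible in the commutative ring $D$, forcing $d^2$ and hence $d$ to be invertible. Thus $D$ is division-graded, i.e. a graded-field, and anisotropy of $q$ is immediate since invertibility of $q(m)$ gives $q(m)\ne0$. For (ii)$\Rightarrow$(iii) the new point is that each diagonal system $J_i=D_0e_i$, with product $P(ae_i)(be_i)=a^2\overline b\,e_i$, is division-graded: for homogeneous $0\ne a\in D_0$, $a$ is invertible in the graded-field $D$, and the identities $a^{\pm2}=q(a^{\pm1}u)\in q(M)$ combine with ampleness to give $a^{\pm2}D_0\subseteq D_0$, so multiplication by $a^2$ (hence $P(ae_i)$) is bijective on $D_0$. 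Together with invertibility of every homogeneous $m\in M$ (from (ii)$\Rightarrow$(i)), this is exactly division-triangulation; (iii)$\Rightarrow$(i) is the definition. Freeness of $M$ is the standard fact that a graded module over a graded-field admits a homogeneous basis.

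For part (b) I would combine (a) with the dimension bounds. If $J$ is a triangulated torus it is division-triangulated, so by (a) $D$ is a graded-field, $q$ is anisotropic and $M$ is $D$-free with homogeneous basis $(u_i)$, $u_i\in M^{\de_i}$ and $q(u_i)\ne0$. Writing $M^\la=\bigoplus_i D^{\la-\de_i}u_i$, the inclusion $D^\mu u_i\subseteq M^{\de_i+\mu}$ and $\dim_kM^{\de_i+\mu}\le1$ force $\dim_kD^\mu\le1$ for all $\mu$, so $D$ is a torus; its support $\Ga$ is a subgroup and $D\cong k^t[\Ga]$ by Def.~\ref{asstordef}. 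The coset condition $(\de_i+\Ga)\ne(\de_j+\Ga)$ for $i\ne j$ holds, since otherwise an invertible $d\in D^{\de_i-\de_j}$ would place $u_i$ and $du_j$ in the same one-dimensional space $M^{\de_i}$, contradicting $D$-independence. The converse is the same computation run backwards: (I)+(II) give $\dim_kM^\la\le1$ and $\dim_kJ_i^\la=\dim_kD_0^\la\le1$ from distinctness of the cosets, and anisotropy from $q(du_i)=d^2q(u_i)$, so (a) yields division-triangulation and $J$ is a torus. For orthogonality, $q(u_i,u_j)\in D^{\de_i+\de_j}$; since $q(u_i)\ne0$ forces $2\de_i\in\Ga$, the space $M^{2\de_i+\de_j}$ already contains the nonzero line $D^{2\de_i}u_j$, and $\dim_kM^{2\de_i+\de_j}\le1$ then excludes a second independent line $D^{\de_i+\de_j}u_i$, whence $D^{\de_i+\de_j}=0$ and $q(u_i,u_j)=0$.

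I expect the main obstacle to be the verification in (a)(ii)$\Rightarrow$(iii) that the diagonal Peirce systems $J_i$ are division-graded: unlike the off-diagonal case, which is handed to us by (\ref{eq:qfinvert}), this is not about invertibility in $D$ but about bijectivity of $P(ae_i)$ on the possibly proper ample subspace $D_0$, and it is precisely here that the identities $a^{\pm2}\in q(M)$ and Clifford-ampleness must be used in tandem. A secondary care point is keeping the two notions of invertibility of $m\in M$ (in the triple $(M,q,S)$ versus in $J$) aligned, which rests on the standing faithfulness of the $D$-action.
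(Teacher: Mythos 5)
Your proof is correct and, for the bulk of the statement, follows the paper's own route: part (a) via the invertibility criterion (\ref{eq:qfinvert}) together with the Lem.~\ref{hermtor}-style ampleness argument showing that the diagonal Peirce spaces $D_0e_i$ are division-graded (this is exactly how the paper handles the step you flag as the main obstacle), and part (b) via the injection of $D$ into $M$ (the paper uses $D\to Du$, you use $D^\mu u_i\subseteq M^{\de_i+\mu}$ -- same computation) and the coset condition forced by $\dim_k M^\la\le 1$. The one place where you genuinely diverge is the final orthogonality claim. The paper deduces $\de_i+\de_j\notin\Ga$ from the pointed-reflection-subspace relations of Lemma~\ref{supplem}, and this is where the hypothesis $D=D_0$ enters (it is needed to identify $\Ga$ with $\scL=\supp_\La D_0$ so that $\scL+2\scS\subseteq\scL$ applies). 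You instead run a direct dimension count on $M^{2\de_i+\de_j}$, using only that $2\de_i\in\Ga$ (forced by $0\ne q(u_i)\in D^{2\de_i}$) and that $M^{2\de_i+\de_j}$ cannot contain the two $D$-independent lines $D^{2\de_i}u_j$ and $D^{\de_i+\de_j}u_i$. Your argument is self-contained, never invokes Lemma~\ref{supplem}, and in fact never uses $D=D_0$, so it establishes the orthogonality conclusion slightly more generally than stated; both arguments are valid.
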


\begin{proof} (a) If (i) holds, the invertibility criterion
(\ref{eq:qfinvert}) together with $q(du) = d^2$ implies that $D$ is
a graded-field and then that $q$ is graded-anisotropic. Suppose
(ii). Then clearly every nonzero homogenous $m\in M$ is invertible.
Moreover, it follows as in Lem~\ref{hermtor} that $D_0$ is a
division-graded triple, whence $J$ is division-triangulated. The
implication (iii) $\Rightarrow$ (i) is clear. It is a standard fact
that any graded module over a division-graded algebra is free with a
homogeneous basis.

(b) Suppose $J$ is a triangulated Jordan triple torus. Then (a)
applies. Since $D \to D.u \subset M$ is injective and homogeneous of
degree $0$, $D$ is a torus. Hence $D=k^t[\Ga]$ is a twisted group
algebra for $\Ga= \supp_\La D$, a subgroup of $\La$. As in (a), $M$
is a free $D$-module with a homogeneous basis $\{u_i :i\in I\}$. We
have $q(u_i) \ne 0$ because $u_i \ne 0$. Since $0 \ne D^\ga M^\mu$
for $\ga\in \Ga$ and $\mu \in \supp_\La M$, the condition $(\de_i +
\Ga) \ne (\de_j + \Ga)$ for $i\ne j$ follows from $\dim_k M^\la\le
1$. The converse is easily verified. Observe that $q(u_i, u_j) \in
D^{\de_i + \de_j}$, but $\de_i + \de_j \not\in \Ga$ if $D=D_0$.
Otherwise, $\de_i = -\de_j + \ga = \de_j + (\ga - 2\de_j)$ for some
$\ga\in \Ga$ and $\ga - 2\de_j\in \Ga$ by (\ref{eq:supplem1}) since
$\Ga = \scL$ in the notation of loc.~cit. and $\scS=-\scS$.
\end{proof}

\begin{example} \label{examaczn} {\rm Let $\La=\ZZ^n$ and let $J=\AC(q,S,D_0)= D_0 e_1\oplus M \oplus
D_0 e_2$ be a $\La$-triangulated ample Clifford system such that
$D_0$ generates $D$ as algebra. For $\scL = \supp_\La D_0$ we
therefore have $\ZZ[\scL] =\supp_\La D = \Ga$, a subgroup of $\La$,
and for $\scS=\supp_\La M$ we get from (\ref{eq:supplem1}) that
$2\scS \subset \scL \subset \scS$, so $\La= \ZZ[\scS]$ and $2\La
\subset \ZZ[\scL]=\Ga \subset \La$, proving that $\La / \Ga$ is a
finite group. It follows that $\Ga$ is free of rank $n$, thus $D$ is
isomorphic to a Laurent polynomial ring in $n$ variables. Moreover,
from $\scL + \scS \subset \scS$ (or from
Cor.~\ref{clifforddivcor}(b)) we get $\Ga + \scS \subset \scS$,
whence the set of coset $\scS/\Ga$ embeds in the finite group
$\La/\Ga$ and is therefore finite. Thus, $M$ is free of finite rank.
}\end{example}

\section{Graded-simple-triangulated Jordan triple systems}
\label{sec:grsimcla}

In this section we prove (Th.~\ref{maintheorem}) that under some
mild additional assumptions the graded-simple hermitian matrix and
ample Clifford systems give us in fact all the possibilities for
graded-simple-triangulated Jordan triple systems, and we describe
them completely in Cor.~\ref{classification}.  Finally, we describe
the division-triangulated and tori, in particular the case
$\La=\ZZ^n$, among the graded-triangulated Jordan triple systems in
Corollaries~\ref{divgradclassi}, ~\ref{jtstorclassi} and
~\ref{jtszncla}.

Unless specified otherwise, $J=J_1 \op M \op J_2$ is a Jordan triple
system over $k$ triangulated by $(u; e_1, e_2)$. We refer the reader
to \S\ref{sec:gr-triang JTS general} for unexplained notation. We
will not right away assume that $J$ is graded  or even
graded-simple. Rather, to prove the main result of this section we
will perform certain reductions to more specific situations (passing
to a completion of $J$ over the Laurent series ring or passing to an
isotope) and, unfortunately, graded-simplicity can not always be
maintained under these reductions. We will therefore begin this
section by presenting these reductions. \sm

Let $J$ be an arbitrary Jordan triple system and let $t$ be an
indeterminate over $k$. We denote by
$$ \hj = J((t)) = \big\{ \ts \sum_{i\ge N} \, x_i t^i : x_i \in J, N\in
\ZZ \big \}
$$
the Jordan triple system over $k$ whose Jordan triple product is
defined by
\begin{eqnarray*}&& \wh P\big(\sum_{i\ge N}\, x_i t^i\big) \,
  \big(\sum_{j\ge M} y_j t^j\big)\\ &&  = \sum_{i\ge N, \, j\ge M}\, P(x_i)y_j t^{2i+j} \, + \,
\sum_{i_2> i_1\ge N, \, j\ge M}\, \{x_{i_1},\, y_j,\, x_{i_2} \}
t^{i_1 + i_2 + j}. \end{eqnarray*}

Note that this makes sense since in any fixed degree the sum on the
right hand side is finite. Observe that $\hj$ contains $J=J\, t^0$
as a subsystem. It is also easy to check that $\sum_{i\ge N} \, x_i
t^i$ with $x_N \ne 0$ is invertible in $\hj$ if  $x_N$ is invertible
in $J$.

\begin{assum} \label{ass1}  $J=J_1\oplus M \oplus J_2$
is a Jordan triple system triangulated by $(u; e_1,e_2)$ for which
the $k$-linear map $L: J_1 \to C_0 : x_1 \mapsto L(x_1)$ defined in
{\rm \ref{001}} is injective. {\rm Note that then $L: J_2 \to C_0^*$
is also injective because $L(x_2)c_0^* = (L(x_2^*)c_0)^*$ and $x_2^*
\in J_1$. Recall that $C$ denotes the subalgebra of $\End_k(M)$
generated by $C_0$.}
\end{assum}

\begin{lemma} \label{laurent} If $J$ satisfies {\rm Assumption~\ref{ass1}}
with respect to the triangle $\scT=(u; e_1, e_2)$ then so does
$\hj$, also with respect to $\scT$. Moreover:
\begin{enumerate}
 \item[{\rm (i)}] The Peirce spaces of $\hj$ with respect to $\scT$ are $\hj_i =
J_i((t))$ and $\wh M = M((t))$.

\item[{\rm (ii)}] Let $\wh C$ be the subalgebra of End$_kM((t))$ generated by
$\wh{C_0}= L(\wh J_1)=C_0((t))$, and let $\hat \pi$ be the reversal
involution of $\wh C$ with respect to $\wh{C_0}$ {\rm (cf.
\ref{pi})}. Then $C$ is canonically isomorphic to the subalgebra of
$\wh C$ preserving degrees. Identifying $C$ with this subalgebra we
have $\wh \pi _{| C} = \pi$ and, with obvious meaning, $\wh * _{| C}
= *$.

\item[{\rm (iii)}] For $m\in M$ we put $\wh m = u + t m$ and note that $\wh
m$ is invertible in $J((t))$. Then for all $c\in C\subseteq \wh C$
and $\wh Q_2(.)=\wh P(.)e_1 :$\sm
\begin{eqnarray*}
 {\wh Q_2}(c\wh m)=0= {\wh Q_2}(c \wh m, \wh m ) &\implies &
           Q_2(cm)=0=Q_2(cm, m), \\
 {\wh Q}_2 (\wh{m},c\, c^*\wh{m})=0 &\implies&
              Q_2(m,cc^*m)=0, \hbox{ and} \\
 {\wh Q}_2 (cM, \wh m )=0 &\implies&  Q_2(cM,m)=0.
\end{eqnarray*}

\item[{\rm (iv)}] If $J_i$ does not contain nonzero elements with trivial
square {\rm cf. (\ref{eq:squares})}, then neither does $\hj_i$.
\end{enumerate}
\end{lemma}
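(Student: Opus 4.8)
The plan is to use the natural $\ZZ$-grading of $\hj=J((t))$ by $t$-degree, in which $J=Jt^0$ is the degree-$0$ subsystem and $\wh P$ is the ``graded'' extension of $P$: on homogeneous elements $\{x\,t^i,y\,t^j,z\,t^l\}=\{x,y,z\}\,t^{i+j+l}$, so every derived operator (Peirce projection, $\wh Q_2$, $\wh\pi$, $\wh *$) is a degree-preserving extension of its counterpart on $J$. First I would note that $u,e_1,e_2\in Jt^0$ remain tripotents; since $\wh P(u)e_1=P(u)e_1=e_2$ and Peirce membership of degree-$0$ elements is inherited from $J$, the Triangle Criterion \cite[I.2.5]{Nbook} shows that $\scT=(u;e_1,e_2)$ is a triangle of $\hj$. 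Because the Peirce projections attached to $e_1,e_2\in Jt^0$ preserve $t$-degree, the Peirce spaces split coefficientwise, giving $\hj_i=J_i((t))$ and $\wh M=M((t))$, hence $\hj=\hj_2(e_1)\op\wh M\op\hj_2(e_2)$; this is both the triangulation of $\hj$ and part~(i). Finally, for $\hat x_1=\sum_l x_{1,l}t^l\in\hj_1$ the operator $L(\hat x_1)=\sum_l L(x_{1,l})t^l$ acts on $\wh M$, and $L(\hat x_1)=0$ forces each $L(x_{1,l})=0$, so each $x_{1,l}=0$ by Assumption~\ref{ass1} for $J$; thus $L$ is injective on $\hj_1$ and $\hj$ satisfies Assumption~\ref{ass1} with respect to $\scT$.

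For part~(ii) I would first check $\wh C\subseteq C((t))$: a composition $(c\,t^i)(d\,t^j)$ equals $(cd)\,t^{i+j}$ with $cd$ computed in $C$, so sums and products of elements of $\wh{C_0}=C_0((t))$ stay in $C((t))$. The degree-preserving elements of $C((t))$ are exactly the $c\,t^0$ with $c\in C$, and all of these lie in $\wh C$ since $C_0\,t^0\subseteq\wh C$ generates $C\,t^0$; this yields the canonical identification of $C$ with the degree-preserving subalgebra of $\wh C$. To see $\wh\pi|_C=\pi$ it suffices to test on a product $c=L(x_1)\cdots L(x_n)$ of generators: then $c\,t^0=L(x_1 t^0)\cdots L(x_n t^0)$, so the reversal property \ref{pi} of $\wh\pi$ (which exists by \cite[1.6.6]{MNSimple} applied to the now-triangulated $\hj$) gives $\wh\pi(c\,t^0)=(L(x_n)\cdots L(x_1))\,t^0=c^\pi\,t^0$. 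Similarly, by the analogue of \ref{6} for $\hj$ one has $c^{\wh *}=\wh P(u)\,c\,\wh P(u)$, and since $\wh P(u)$ restricts to $P(u)$ degreewise on $Jt^0$ this equals $c^*\,t^0$, i.e.\ $\wh *|_C=*$.

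Parts~(iii) and~(iv) are then coefficient extractions from the graded form $\wh Q_2\big(\sum_i w_it^i\big)=\sum_i Q_2(w_i)t^{2i}+\sum_{i_1<i_2}Q_2(w_{i_1},w_{i_2})t^{i_1+i_2}$. Here $\wh m=u+tm$ is invertible because its leading coefficient $u$ is invertible in $J$, and $c\wh m=cu+t\,(cm)$. Hence $\wh Q_2(c\wh m)=Q_2(cu)+Q_2(cu,cm)\,t+Q_2(cm)\,t^2$ and $\wh Q_2(c\wh m,\wh m)=Q_2(cu,u)+[Q_2(cu,m)+Q_2(cm,u)]\,t+Q_2(cm,m)\,t^2$, so reading off the $t^2$-coefficients gives the first implication. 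The second follows identically from $cc^*\wh m=cc^*u+t\,(cc^*m)$, whose contribution to the $t^2$-coefficient of $\wh Q_2(\wh m,cc^*\wh m)$ is $Q_2(m,cc^*m)$; the third comes from the $t^1$-coefficient of $\wh Q_2(c\mu,\wh m)=Q_2(c\mu,u)+Q_2(c\mu,m)\,t$ for $\mu\in M$.

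For part~(iv), writing $0\ne\hat x_i=\sum_{l\ge N}x_{i,l}t^l$ with $x_{i,N}\ne0$, the lowest-degree term of $\wh P(\hat x_i)e_i$ sits in degree $2N$ with coefficient $P(x_{i,N})e_i=x_{i,N}^2$ (cf.~\ref{eq:squares}), since the cross terms $\{x_{i,l_1},e_i,x_{i,l_2}\}$ with $l_1<l_2$ all lie in degree $>2N$; by hypothesis $x_{i,N}^2\ne0$, so $\hat x_i$ has nonzero square. None of these computations is deep; the point that needs care is the bookkeeping of $t$-degrees in $\wh P$, specifically that the cross-term summand $\{x_{i_1},y_j,x_{i_2}\}$ is constrained by $i_2>i_1$, which is precisely what makes $\wh Q_2$ split as above and isolates the desired coefficients cleanly. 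One must also confirm that $\wh\pi$ genuinely exists on $\hj$ before invoking it, so establishing the triangulation of $\hj$ (so that \cite[1.6.6]{MNSimple} applies) has to precede part~(ii).
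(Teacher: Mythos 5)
Your proposal is correct and follows essentially the same route as the paper: the paper declares (i) and (ii) clear, verifies (iii) by exactly the coefficient computation you give for $\wh Q_2(c\wh m)$ and $\wh Q_2(c\wh m,\wh m)$ (with "the others follow similarly"), and leaves (iv) to the same degree bookkeeping. You have merely filled in the routine details the paper omits, and your observation that the constraint $i_2>i_1$ in the cross terms is what isolates the lowest-degree coefficient in (iv) is the right point to flag.
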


\begin{proof} (i) and (ii) are clear. (iii) That $\wh m =u+tm$ is
invertible in $\hj$ follows {f}rom the invertibility criterion
mentioned above.  We have $ {\wh Q}_2 (c \wh m) = {\wh P}
(cu+ctm)e_1 = Q_2(cu) + Q_2(cu,cm)t + Q_2(cm)t^2$ and $ {\wh Q}_2
(c\wh m , \wh m )=
Q_2(cu,u)+\big(Q_2(cu,m)+Q_2(cm,u)\big)t+Q_2(cm,m)t^2$, which
implies the first equation. The others follow similarly. \end{proof}

Our second reduction is passing to an isotope. Recall that for an
arbitrary Jordan triple system and an invertible element $v$ of $J$
the isotope $J^{(v)}$ is the Jordan triple system with
multiplication $P^{(v)}(x)y=P(x)P(v)y$. The following lemma, whose
proof is left to the reader, describes which properties are
maintained by passing {f}rom $J$ to a special isotope.

\begin{lemma} \label{isotope} Suppose $J$ is triangulated by $\scT=(u; e_1,
e_2)$, and let $m\in M$ be an invertible element. Then
$v=e_1+Q_2(m)^{-1}$ is invertible in $J$ and the isotope $\wt J
:=J^{(v)}$ with $\wt P=P^{(v)}$ is triangulated by $\wt \scT= (\wt u
; \wt e_1,\wt e_2)=(m;e_1,Q_2(m))$ with Peirce spaces $\wt J_1
=J_1$, $\wt M =M$ and $\wt J_2 =J_2$ as $k$-modules. Moreover,
denoting the data for $\wt J$ by $\wt L, \wt C_0$ etc, we have :
\begin{enumerate}
 \item[{\rm (i)}] $\wt L=L$ as $k$-linear maps, hence $\wt C_0=C_0$ and $(\wt C, \wt
\pi)=(C,\pi)$ as algebras with involution. In particular, if $J$
satisfies {\rm Assumption~\ref{ass1}} then so does $\wt J$ with
respect to $\wt \scT$.

 \item[{\rm (ii)}] For  $n, n_1 \in M$ we have ${\wt Q_2}(n)=Q_2(n)$ and
${\wt Q_2}(n,n_1) = Q_2(n,n_1)$.

 \item[{\rm (iii)}] If $J_1$ does not contain nonzero $x_1\in J_1$ with
$x_1^2=0$ the same holds for $\wt J_1$.

 \item[{\rm (iv)}] Suppose $J= \bigoplus_{\la \in \La} J^\la$ is
graded-triangulated by $(u; e_1,e_2)\in J^0$. If\/  $m\in M^\la$ is
homogeneous, then $\wt J$ is graded-triangulated  with
$$\wt J_1^\mu=J_1^\mu , \quad \wt M^\mu=M^{\mu+\la} ,\quad
\wt J_2^\mu=J_2^{\mu+2\la} \quad  (\mu \in \La). $$ \end{enumerate}
\end{lemma}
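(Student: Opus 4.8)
The plan is to push everything through the two defining relations of an isotope, namely $\wt P(x)=P(x)P(v)$ and $\{x,y,z\}^{(v)}=\{x,P(v)y,z\}$, so that every operator of $\wt J$ is obtained from the corresponding operator of $J$ by inserting $P(v)$; all verifications then reduce to Peirce calculus for the original triangle $\scT=(u;e_1,e_2)$ together with the identities of \S\ref{sec:gr-triang JTS general}. First I would dispose of the well-definedness and invertibility of $v$. Since $m$ is invertible in $J$ and $P(m)(e_1+e_2)=Q_1(m)\oplus Q_2(m)$, the element $Q_2(m)$ is invertible in $J_2$; hence $Q_2(m)^{-1}\in J_2$ exists, and $v=e_1+Q_2(m)^{-1}$ is a diagonal element of $J_1\oplus J_2$ with both components invertible, so $v$ is invertible in $J$ and $P(v)$ is bijective.

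The computational heart is the pair of identities $P(v)e_1=e_1$ and $P(v)m=m^{-1}$. For the first, expand $P(v)=P(e_1)+P(e_1,Q_2(m)^{-1})+P(Q_2(m)^{-1})$: the cross term $\{e_1,e_1,Q_2(m)^{-1}\}$ and the term $P(Q_2(m)^{-1})e_1$ vanish (because $Q_2(m)^{-1}\in J_2(e_2)$ while $e_1\in J_2(e_1)\cap J_0(e_2)$), leaving $P(e_1)e_1=e_1$. For the second, the terms $P(e_1)m$ and $P(Q_2(m)^{-1})m$ vanish by Peirce orthogonality (here $m\in J_1(e_1)\cap J_1(e_2)$), and the surviving cross term is $\{e_1,m,Q_2(m)^{-1}\}$, which by \ref{05} and \ref{1} equals $Q_2(m)^{-1}\cdot\overline m$. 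The remaining identity $Q_2(m)^{-1}\cdot\overline m=m^{-1}$ is exactly where the choice of $v$ does its work, and I expect it to be the \emph{main obstacle}; I would establish it using \ref{01} together with the standard inverse relations $m^{-1}=P(m)^{-1}m$ and $P(m^{-1})=P(m)^{-1}$.

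Granting these two identities the triangulation falls out cleanly. From $P(v)e_1=e_1$ we get $\{e_1,e_1,\cdot\}^{(v)}=\{e_1,e_1,\cdot\}$, so $e_1$ is a tripotent of $\wt J$ whose Peirce decomposition coincides with that of $e_1$ in $J$, i.e. $\wt J_2(e_1)=J_1$, $\wt J_1(e_1)=M$, $\wt J_0(e_1)=J_2$. From $P(v)m=m^{-1}$ we get $\wt P(m)m=P(m)m^{-1}=m$, so $m$ is a tripotent of $\wt J$; since moreover $\wt P(m)=P(m)P(v)$ is bijective, $m$ is an \emph{invertible} tripotent and therefore $\wt J=\wt J_2(m)$. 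Now the Triangle Criterion \cite[I.2.5]{Nbook} applies to $m$ and $e_1$ (both tripotents, with $m\in\wt J_1(e_1)=M$ and $e_1\in\wt J_2(m)=\wt J$), yielding the triangle $(m;e_1,\wt P(m)e_1)$ in which $\wt P(m)e_1=P(m)P(v)e_1=P(m)e_1=Q_2(m)$; this is precisely $\wt\scT$. Because $\wt J=\wt J_2(m)=\wt J_2(\wt e)$ with $\wt e=e_1+Q_2(m)$, the system $\wt J$ is triangulated by $\wt\scT$, and reading off the triangle Peirce structure relative to $e_1$ gives $\wt J_1=\wt J_2(e_1)=J_1$, $\wt M=\wt J_1(e_1)=M$ and $\wt J_2=\wt J_0(e_1)=J_2$ as $k$-modules.

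The four items are then short. For (i): $\wt L(x_1)m=\{x_1,e_1,m\}^{(v)}=\{x_1,P(v)e_1,m\}=\{x_1,e_1,m\}=L(x_1)m$, so $\wt L=L$, whence $\wt C_0=C_0$ and $\wt C=C$ inside $\End_k(M)=\End_k(\wt M)$; the reversal involution is determined by \ref{pi} from the same generators, so $\wt\pi=\pi$, and the injectivity of $L$ in Assumption~\ref{ass1} transfers verbatim since $\wt L=L$ and $\wt J_1=J_1$. Items (ii) and (iii) are the same one-line substitution: $\wt Q_2(n)=\wt P(n)e_1=P(n)P(v)e_1=P(n)e_1=Q_2(n)$ together with its linearization, and the $\wt J$-square $\wt P(x_1)e_1=P(x_1)P(v)e_1=P(x_1)e_1=x_1^2$ for $x_1\in\wt J_1=J_1$, so trivial squares coincide. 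For (iv) I would \emph{define} the regrading $\wt J^\nu:=J_1^\nu\oplus M^{\nu+\la}\oplus J_2^{\nu+2\la}$, chosen so that $\wt\scT\subseteq\wt J^0$ (indeed $e_1\in J_1^0$, $m\in M^\la$, $Q_2(m)\in J_2^{2\la}$); under this regrading the derived operations $\wt L=L$, $\wt Q_i=Q_i$ and $\wt{}^-$ are all homogeneous of degree $0$, since they are graded for the original grading and the regrading shifts $J_1,M,J_2$ by $0,\la,2\la$ respectively. Feeding this into the product formulas \ref{01}--\ref{05} for the triangulated system $\wt J$ — and checking that the irreducible products $\wt P(x_1)y_1\in\wt J_1$ and $\wt P(m)y_1\in\wt J_2$ have the correct degrees via the same Peirce vanishings used above — shows that $\wt J$ is graded-triangulated with the stated homogeneous components.
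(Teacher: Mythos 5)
The paper gives no proof of this lemma at all --- it is explicitly ``left to the reader'' --- so there is no argument of the authors to compare yours against; I can only assess your proposal on its own terms. It is correct, and the route you take (reduce everything to the two Peirce computations $P(v)e_1=e_1$ and $P(v)m=m^{-1}$, obtain the triangle from the Triangle Criterion applied to the tripotents $e_1$ and $m$ of $\wt J$, and then read off (i)--(iv) by inserting $P(v)$ into each derived operation) is the natural one; in particular your treatment of (iv) by regrading so that $\wt\scT\subseteq\wt J^0$ and checking degree~$0$ homogeneity of $\wt L=L$, $\wt Q_i$ and $\wt{\phantom{x}}$ does go through. The only place where you stop short is the identity $Q_2(m)^{-1}\cdot\overline m=m^{-1}$, which you correctly flag as the crux but do not prove. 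It does close with the tools you name. First, $Q_2(m)=P(m)e_1$ and the fundamental formula give $P\big(Q_2(m)\big)=P(m)P(e_1)P(m)$, whence $Q_2(m)^{-1}=P\big(Q_2(m)\big)^{-1}Q_2(m)$ satisfies $P(m)Q_2(m)^{-1}=e_1$. Second, \ref{1} says that $x_2\mapsto L(x_2)$ is a unital homomorphism from the $e_2$-isotope Jordan algebra on $J_2$ (with $U_{x_2}y_2=P(x_2)\overline{y_2}$ and unit $e_2$) into $\End_k(M)^+$; since $y:=Q_2(m)$ is invertible there with algebra inverse $\overline{y^{-1}}$, the standard argument ($L(y)L(\overline{y^{-1}})L(y)=L(y)$ together with invertibility of $L(y)L(\overline{y^{-1}})$, which follows from $U_yU_{\overline{y^{-1}}}=\Id_{J_2}$) yields $L(y)L(\overline{y^{-1}})=\Id_M$. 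Now put $n=Q_2(m)^{-1}\cdot\overline m$, so $\overline n=\overline{Q_2(m)^{-1}}\cdot m$. By \ref{03}, $Q_1(m,\overline n)=\{m,Q_2(m)^{-1},m\}=2P(m)Q_2(m)^{-1}=2e_1$, hence $Q_1(m,\overline n)\cdot m=2m$ by \ref{1}; and $Q_2(m)\cdot\overline n=L(y)L(\overline{y^{-1}})m=m$. Then \ref{01} gives $P(m)n=Q_1(m,\overline n)\cdot m-Q_2(m)\cdot\overline n=2m-m=m$, i.e.\ $n=P(m)^{-1}m=m^{-1}$, as required. With this step supplied your proof is complete.
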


We point out that $\wt J_1 $ and $J_1$ are in general not isomorphic
as triple systems, rather we have $\wt P(x_1)y_1 = P(x_1){\bar
y_1}$. \ms

{F}rom now on we will use the notations $\wh J$ and $\wt J$ to
denote the Jordan triple systems of Lemma~\ref{laurent} and
Lemma~\ref{isotope}. \ms

\begin{proposition} \label{tec} Suppose $J$
satisfies {\rm Assumption~{\rm \ref{ass1}}}, and let $R$ be a
$\pi$-invariant ideal of $C$ satisfying $R\cap C_0=0$. Then for all
$r\in R$, $x_1\in J_1$, $c \in C $ and $m\in M$ the following hold:
\begin{enumerate}
\item[{\rm (i)}]$r+r^\pi = r^2 = rr^\pi = rL(x_1)r^\pi = r(c^\pi
-c)=0$. Also $[r, C]=0$, so $R$ is a central ideal.

\item[{\rm (ii)}] $Q_i(ru)=0=T_i(ru)$ for $i=1,2$,

\item[{\rm (iii)}] $Q_2(rm)=0=Q_2(rm, m)$,

\item[{\rm (iv)}] $Q_1(rm)^2=0$,

\item[{\rm (v)}] $T_2(rm)^2=Q_1(rm)^*$,

\item[{\rm (vi)}] If either {\rm (a)} $M=Cu$ or {\rm (b)} $J_1$  does not contain nonzero $x_1\in J_1$
with $x_1^2=0$ {\rm (cf. \ref{eq:squares})}, then $Q_2(RM,M)=0$.
\end{enumerate}
\end{proposition}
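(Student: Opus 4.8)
The plan is to prove (i) directly from the ample--subspace identities, deduce (ii) from it, and then obtain (iii)--(vi) by transporting the ``$cu$'' information of (i)--(ii) to a general off-diagonal element through the two reductions of Lemmas~\ref{laurent} and~\ref{isotope}. For (i) I would first note that $r+r^\pi=L\bigl(T_1(ru)\bigr)\in C_0$ by \ref{3}, while $r+r^\pi\in R$ since $R$ is $\pi$-invariant; hence $r+r^\pi\in R\cap C_0=0$, so $r^\pi=-r$ for every $r\in R$. As $L(e_1)=\Id_M=1$ by \ref{1}, formula \ref{2} with $x_1=e_1$ gives $rr^\pi=rL(e_1)r^\pi=L\bigl(P(ru)P(u)e_1\bigr)\in R\cap C_0=0$, whence $r^2=-rr^\pi=0$, and the same formula yields $rL(x_1)r^\pi=L\bigl(P(ru)P(u)x_1\bigr)\in R\cap C_0=0$. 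For the remaining assertions the ideal property gives $rc\in R$, so $(rc)^\pi=-(rc)$; comparing with $(rc)^\pi=c^\pi r^\pi=-c^\pi r$ gives the twisted rule $rc=c^\pi r$. For $c\in C_0$ one has $c^\pi=c$, so $r$ commutes with $C_0$; the centraliser of $r$ is a subalgebra containing the generating set $C_0$, hence equals $C$, i.e. $[r,C]=0$. Finally $c^\pi r=rc=cr$ gives $(c^\pi-c)r=0$, and by centrality $r(c^\pi-c)=0$.

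For (ii), \ref{3} and $r+r^\pi=0$ give $L\bigl(T_1(ru)\bigr)=0$, so $T_1(ru)=0$ by injectivity of $L$ on $J_1$; the computation in (i) reads $L\bigl(Q_1(ru)\bigr)=L\bigl(P(ru)P(u)e_1\bigr)=rr^\pi=0$ (using $P(u)e_1=e_2$ and $P(ru)e_2=Q_1(ru)$), so $Q_1(ru)=0$; applying the period-$2$ automorphism ${}^*$ together with \ref{4} and $(ru)^*=-ru$ then yields $T_2(ru)=0$ and $Q_2(ru)=0$. For (iii), Lemma~\ref{laurent}(iii) reduces $Q_2(rm)=0=Q_2(rm,m)$ to the two assertions $\wh Q_2(r\wh m)=0$ and $\wh Q_2(r\wh m,\wh m)=0$ in $\hJ$, where $\wh m=u+tm$ is invertible. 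To prove these I would pass to the isotope $\wt{\hJ}$ of $\hJ$ at $\wh m$, in which $\wh m$ plays the role of $u$: it again satisfies Assumption~\ref{ass1} (Lemma~\ref{isotope}(i)), its coordinate pair is $(\wh C,\wh\pi)=(C((t)),\pi)$, and $R((t))$ is a $\wh\pi$-invariant ideal meeting $\wh{C_0}=C_0((t))$ trivially. Applying (ii) in $\wt{\hJ}$ to $r\wh m=r\cdot(\text{new }u)$ gives $\wt{\wh Q}_2(r\wh m)=0=\wt{\wh T}_2(r\wh m)$, and since the isotope preserves $Q_2$ and its polarisation (Lemma~\ref{isotope}(ii)), these are exactly the two required identities.

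Part (iv) then follows formally in $\hJ$: from $\wh Q_2(r\wh m)=\wh P(r\wh m)e_1=0$ and the fundamental formula,
\[
 \wh Q_1(r\wh m)^2=\wh P\bigl(\wh P(r\wh m)e_2\bigr)e_1
   =\wh P(r\wh m)\,\wh P(e_2)\,\wh P(r\wh m)e_1=0,
\]
and since $\wh Q_1(r\wh m)=Q_1(ru,rm)\,t+Q_1(rm)\,t^2$ (as $Q_1(ru)=0$), the coefficient of $t^4$ gives $Q_1(rm)^2=0$. For (v) I would first rewrite the claim, via ${}^*$ and \ref{4}, as $T_1(rm)^2=Q_1(rm)$, which by \ref{5} and \ref{4} reduces to the vanishing of $Q_2\bigl(T_1(rm)\cdot u,\,rm\bigr)$, equivalently $T_2(\rho m)=0$ for the ideal element $\rho=r\,L\bigl(T_1(rm)\bigr)\in R$. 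This I would establish by the same isotope: part (ii) in $\wt{\hJ}$ also yields $\wt{\wh Q}_1(r\wh m)=0$, and the isotope's Peirce map is $\wt{\wh Q}_1(n)=\wh P(n)\wh Q_2(\wh m)^{-1}$, so $\wh P(r\wh m)\wh Q_2(\wh m)^{-1}=0$; substituting $\wh Q_2(\wh m)^{-1}=e_2-T_2(m)\,t+\cdots$ into $\wh Q_1(r\wh m)=\wh P(r\wh m)e_2=\wh P(r\wh m)\bigl(e_2-\wh Q_2(\wh m)^{-1}\bigr)$ and comparing the coefficient of $t^2$ produces $T_2(rm)^2=Q_1(rm)^*$.

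Finally, for (vi)(a), when $M=Cu$ both entries of $Q_2(r\,cu,\,du)$ lie in $Cu$; by \ref{pi2} this equals $Q_2\bigl(u,(rc)^\pi d\,u\bigr)=-T_2\bigl((c^\pi rd)u\bigr)$ with $c^\pi rd\in R$, which is $0$ by (ii). For (vi)(b), (iv) and the hypothesis force $Q_1(rm)=0$, whence (v) gives $T_2(rm)^2=0$ and so $T_2(rm)=0$ for all $m$; since the hypothesis is inherited by $\hJ_1$ and by the isotope (Lemmas~\ref{laurent}(iv) and~\ref{isotope}(iii)), running (iv)--(v) inside $\wt{\hJ}$ gives $\wt{\wh T}_2(r\hat n)=\wh Q_2(\wh m,r\hat n)=0$ for every $\hat n\in\wh M$, i.e. $\wh Q_2(rM,\wh m)=0$, and Lemma~\ref{laurent}(iii) then yields $Q_2(rM,m)=0$, as desired. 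The step I expect to be the main obstacle is (v): because the isotope preserves only $Q_2$ and its polarisation, and not $Q_1$ or the traces $T_i$ (cf.\ the remark following Lemma~\ref{isotope}), the $Q_1$- and $T$-statements cannot be read off directly in the isotope and must be recovered from the $Q_2$-level data by the coefficient bookkeeping indicated above; carrying out that comparison carefully (and keeping track of the $t^1$-terms, which simultaneously force $Q_1(ru,rm)=0$) is the technical heart of the argument.
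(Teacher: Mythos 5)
Parts (i)--(iv) and (vi)(a) of your proposal are correct and essentially follow the paper's argument: (i)--(ii) are the same ample-subspace computations (your route to $[r,C]=0$ via $(rc)^\pi=-rc$ is a clean variant of the paper's linearization of $cL(x_1)c^\pi\in C_0$), and (iii) is exactly the paper's two-step reduction, first to $\hJ$ by Lemma~\ref{laurent}(iii) and then to the isotope at $\wh m$, where the claim becomes (ii). (In (iv) the detour through $\hJ$ is unnecessary: once (iii) is known in $J$ for every $m$, the fundamental formula gives $Q_1(rm)^2=P(rm)P(e_2)Q_2(rm)=0$ directly.)

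The genuine gap is (v), which you yourself single out as the obstacle. Your reduction of $T_2(rm)^2=Q_1(rm)^*$ to the vanishing of $Q_2(T_1(rm)\cdot u,rm)$ is correct, but it does not follow from \ref{5} and \ref{4} alone: it rests on the identity $T_2(rm)^2=Q_1(rm)^*+Q_2(rm,(rm)^*)$, which the paper obtains by expanding $P(\{rm,e_1,u\})e_2$ with \cite[JP21]{JP} and using $Q_2(rm)=0$ from (iii); you never supply this step. More seriously, the coefficient extraction you propose does not deliver the claim. The identity $\wh P(r\wh m)\wh Q_2(\wh m)^{-1}=0$ is valid, but since $rL(x_1)r^\pi=0$ forces $P(ru)J_2=0$ (by \ref{2} and injectivity of $L$), its $t^2$-coefficient collapses to $Q_1(rm)=P(ru,rm)\overline{T_2(m)}$ --- a relation in $J_1$ between $Q_1(rm)$ and a cross term, not the statement $T_1(rm)^2=Q_1(rm)$, and there is no visible way to pass from one to the other. (Note also that your target $T_2(\rho m)=0$ for $\rho=rL(T_1(rm))$ cannot be obtained from a general ``$T_2(RM)=0$'', since that is precisely what (vi)(b) proves \emph{using} (v).) The paper's actual mechanism is absent from your proposal: it rewrites $Q_2(rm,(rm)^*)$ as $Q_2(m,rr^*m)$ using $(rm)^*=L\big(T_1(m)\big)r^*u-r^*m$, \ref{pi2}, \ref{6} and (i), reduces to invertible $m$ via $\hJ$, and then uses the key fact that $C^*m\subseteq Cm$ for invertible $m$, so that $Q_2(m,RC^*m)\subseteq Q_2(m,Rm)=0$ by (iii). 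Without this (or an equivalent substitute), (v) --- and with it your (vi)(b), which depends on it --- is not established.
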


\begin{proof} (i) By \ref{2} and \ref{3},
$r+r^\pi=L\big(T_1(ru)\big)$, $rL(x_1)r^\pi=L(P(ru)P(u)x_1)$ and
$rr^\pi=L\big(Q_1(ru)\big)$ all lie in $R\cap C_0$. Hence
$r+r^\pi=rr^\pi=rL(x_1)r^\pi=0$ and, because of injectivity of $L$,
also  $Q_1(ru)=0=T_1(ru)$. It now follows that $r^2=-rr^\pi=0$.
Linearizing $cL(x_1)c^\pi\in C_0$, we have
\begin{gather} cL(x_1)d^\pi +dL(x_1)c^\pi\in
C_0. \tag{$1$}\end{gather} Specializing (1) for $d=r$ and using that
$r^\pi=-r$, we get $cL(x_1)r =rL(x_1)c^\pi$. For $c=1$ we then have
$L(x_1)r=rL(x_1)$. Since $C_0$ generates $C$ as a $k$-algebra, this
forces $[r, C]=0$. Then $cL(x_1)r =rL(x_1)c^\pi$ evaluated for
$x_1=e_1$  shows $r(c^\pi -c)=0$.

(ii) We have already shown in the proof of (i) that
$Q_1(ru)=0=T_1(ru)=Q_1(ru, u)$. By \ref{6}, $(ru)^* =r^\pi u =-ru$,
and then by \ref{4}, $0=T_1(ru)^* =T_2\big((ru)^*\big)=-T_2(ru)$ and
$0=Q_1(ru)^* =Q_2\big((ru)^*\big)=Q_2(-ru)=Q_2(ru)$.

(iii) We first prove that it is enough to show (iii) for invertible
$m$ by passing to $\wh J$. Indeed, because of (iii) of
Lem.~\ref{laurent} it is enough to establish (iii) for $\wh J$. But
for $\wh J$ we know that for any $m\in M$  the element $\wh m =u+tm$
is invertible in $\wh J$ and that $\wh J$ also satisfies Assumption
\ref{ass1}. Let $\wh R$ be the ideal of $\wh C$ generated by
$R\subseteq C \subseteq \wh C$, that is $\wh R=R((t))$. Then $\wh R
\cap \wh C_0=(R \cap C_0)((t))=0$ follows. Thus, without loss of
generality we can assume that $m$ is invertible.

We then pass to the isotope $\wt J$, and note that $\wt J$ satisfies
the assumptions of this proposition. Moreover, because  of (ii) of
Lem.~\ref{isotope}, it will be sufficient to prove (iii) for $m=\wt
u \in \wt J$, or equivalently for $u \in J$. But (iii) for $m=u$ is
just (ii).

(iv) now follows easily {f}rom (iii) since $Q_2(rm)=0$ implies
$Q_1(rm)^2=P\big(Q_1(rm)\big)e_1=P(rm)P(e_2)P(rm)e_1=P(rm)P(e_2)Q_2(rm)=0$.

(v) We have $T_2(rm)^2=P\big(T_2(rm)\big)e_2=P(\{rm, e_1, u\})e_2$,
where, by \cite[JP21]{JP} and because of $P(e_1)u=0$ and $Q_2(rm)=0$
by (iii),
\begin{eqnarray*}
    P(\{rm, e_1, u\})e_2
      &= & P(rm)P(e_1)P(u)e_2 + P(u)P(e_1)P(rm)e_2 \\
      && + L(rm,e_1)P(u)L(e_1, rm)e_2 -
      P(P(rm)P(e_1)u,u)e_2 \\
      &=& Q_2(rm)+Q_1(rm)^*+Q_2(rm, P(u)\,\overline{rm})
      \\
      &=&Q_1(rm)^* +Q_2(rm, (rm)^*).
\end{eqnarray*}  By \ref{5}, $(rm)^* = r^* m^*
= r^*( T_1(m)\cdot u - m) = L\big(T_1(m)\big)\, r^* u - r^* m $
where in the last equality we used \ref{05} and the fact that $C^*$
is the subalgebra generated by $L(J_2)$, and hence commutes with
$C$. Since $r^\pi = - r$ we then get {f}rom   \ref{pi2} and \ref{6}
that
\begin{eqnarray*}
   Q_2(rm, (rm)^*)
    &= & Q_2\big(rm, r^* (T_1(m)\cdot u-m)\big)\\
    &=& Q_2(m, r^\pi L\big(T_1(m)\big)r^* u) \, - \, Q_2(m,r^\pi r^* m)
    \\
    &=& -Q_2(m, r L\big(T_1(m)\big)r^\pi u)+Q_2(m,r r^* m)\\
    &=&Q_2(m,r r^* m)
\end{eqnarray*} since $r L\big(T_1(m)\big)r^\pi=0$ by (i). Therefore, if we can establish
$Q_2(m,r r^* m)=0$ we are done. As in the proof of (iii) we imbed
$J$ into $\wh J$. Then Lem.\ref{laurent}(iii) shows that it is
sufficient to prove this for an invertible $m$. But for invertible
$m$ we have $C^* m \subseteq Cm$, since by \cite[JP21]{JP} and
\ref{04}
\begin{eqnarray*} L(x_2)m
 &= & P(x_2, m)e_2=P(P(m)P(m)^{-1}x_2, m)e_2\\
 & =& L(m, P(m)^{-1}x_2)P(m)e_2=\{Q_1(m), P(m)^{-1}x_2, m\} \\
& =& L\big(Q_1(m)\big)L(\, \overline{P(m)^{-1}x_2}\, )m \in
Cm.\end{eqnarray*} Hence $Q_2(m, R C^* m)\subseteq Q_2(m, R C
m)\subseteq Q_2(m, R m)=0$ by (iii).

(vi) In case (a) we have, using \ref{3.5}, $Q_2(RM,M)=
Q_2(RCu,Cu)\subseteq Q_2(Ru,Cu) = T_2(R^\pi Cu)= T_2(RCu)\subseteq
T_2(Ru)$. Now the claim follows {f}rom (ii).

In case (b) first note that neither $J_2=J_1^*$ contains a nonzero
$x_2\in J_2$ with $x_2^2=0$. It then follows {f}rom (iv) and (v)
that
\begin{gather} T_2(RM)=0. \tag{$2$}\end{gather}
Next we show
\begin{gather} Q_2(RM, m)=0 \,\text{ for invertible }\, m\in M.
\tag{$3$}\end{gather} Indeed, applying Lem.~\ref{isotope}, in
particular (ii) and (iii), we see that it suffices to prove (3) for
$m=u$, in which case it reduces to (2). Finally, we can show that
$Q_2(RM, m)=0$ for arbitrary $m\in M$: By Lem.~\ref{laurent} it
suffices to show ${\wh Q_2}(RM, \wh m) = 0$, for $\wh m =u+tm$. But
this holds by (3) since $\wh J$ satisfies our assumptions.
\end{proof}

Recall that $C_0$ is a Jordan triple system with
$P(c_0)(d_0)=c_0\overline{d_0}c_0$ and that $L: J_1 \to C_0$,
$x_1\mapsto L(x_1)$ is a nonzero specialization (see \ref{1}). In
particular, $L(x_1^2) = \big(L(x_1)\big)^2$ and if
Assumption~\ref{ass1} holds then $J_1 \cong_\La C_0$ as  graded
Jordan triple systems  if $J$ is graded by $\La$.

\begin{lemma} \label{fund} Suppose $J$ is graded-triangulated and
satisfies {\rm Assumption~\ref{ass1}} with respect to  $(u; e_1,
e_2) \subseteq J^0$. Let $R$ be a $\pi$-invariant graded ideal of
$C$ such that $R\cap C_0=0$. If\/  $C_0 $ does not contain nonzero
homogeneous elements which square to zero, then $Q_2(RM,m)=0$ for
invertible homogeneous $m\in M$.
\end{lemma}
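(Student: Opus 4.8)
The plan is to mimic the proof of Prop.~\ref{tec}(vi), case (b), while exploiting that we only need the conclusion for \emph{homogeneous} invertible $m$; this is precisely why the trivial-square hypothesis may be weakened to homogeneous elements of $C_0$. As a first reduction I would pass to the isotope $\tilde J = J^{(v)}$ with $v = e_1 + Q_2(m)^{-1}$ of Lem.~\ref{isotope}. Since $m$ is homogeneous, Lem.~\ref{isotope}(iv) shows that $\tilde J$ is again graded-triangulated, now with distinguished element $\tilde u = m$; moreover by Lem.~\ref{isotope}(i) it satisfies Assumption~\ref{ass1} and has the same $(C,\pi)$ and the same $C_0$, so $R$ remains a $\pi$-invariant graded ideal with $R\cap C_0=0$ and the hypothesis on $C_0$ is unchanged. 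Using $\tilde Q_2 = Q_2$ (Lem.~\ref{isotope}(ii)) together with $\tilde T_2(n) = \tilde Q_2(\tilde u, n) = Q_2(m,n)$, proving $Q_2(RM,m)=0$ becomes equivalent to proving $\tilde T_2(RM)=0$ in $\tilde J$. Hence it suffices to establish the following graded analogue of step~(2) in the proof of Prop.~\ref{tec}(vi): \emph{under the hypotheses of the lemma, $T_2(rm')=0$ for all homogeneous $r\in R$ and $m'\in M$.}

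To prove this claim I would fix homogeneous $r\in R$ and $m'\in M$, so that $rm'\in M$ is homogeneous. Prop.~\ref{tec}(iv) gives $Q_1(rm')^2=0$, and since ${}^*$ is a graded automorphism of degree $0$ carrying squares in $J_1$ to squares in $J_2$, we obtain $(Q_1(rm')^*)^2 = (Q_1(rm')^2)^* = 0$. By Prop.~\ref{tec}(v) we have $T_2(rm')^2 = Q_1(rm')^*$, so the element $z:=T_2(rm')^2 = Q_1(rm')^*$ is a homogeneous element of $J_2$ with $z^2=0$. Now the hypothesis transfers: under Assumption~\ref{ass1} we have $J_1\cong_\La C_0$, so $J_1$ has no nonzero homogeneous element of trivial square, and ${}^*$ restricts to a graded isomorphism $J_1\cong_\La J_2$, whence $J_2$ has no nonzero homogeneous element of trivial square either. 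Therefore $z=0$, i.e. $T_2(rm')^2=0$; applying the same conclusion once more to the homogeneous element $T_2(rm')\in J_2$ yields $T_2(rm')=0$, as claimed.

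Finally, applying the claim to $\tilde J$ (which satisfies all the hypotheses by the first paragraph) gives $\tilde T_2(rm') = Q_2(m,rm') = 0$ for all homogeneous $r,m'$, and hence $Q_2(RM,m)=0$ by bilinearity. The point that requires care --- and which is the whole reason the \emph{graded} hypothesis on $C_0$ is enough --- is to verify that every element being squared stays homogeneous: this uses that $R$ is graded (so $r$ may be taken homogeneous), that $Q_1$, $T_2$ and ${}^*$ are graded, and that the isotope preserves both the grading pieces $\tilde J_1^\mu=J_1^\mu$ and the squaring, since $\tilde P(x_1)e_1 = P(x_1)\overline{e_1} = P(x_1)e_1 = x_1^2$, so that the trivial-square condition on $C_0=\tilde C_0$ is genuinely preserved. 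I expect this homogeneity bookkeeping, rather than any new identity, to be the only real obstacle; in particular no passage to the Laurent completion $\hj$ is needed here, precisely because $m$ is already assumed invertible.
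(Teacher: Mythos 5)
Your proof is correct and follows essentially the same route as the paper's: pass to the isotope at $m$ via Lem.~\ref{isotope}, reduce the assertion to $\wt T_2(RM)=0$, and combine Prop.~\ref{tec}(iv),(v) with the absence of nonzero homogeneous square-zero elements in $C_0\cong_\La J_1\cong_\La J_2$. The only cosmetic difference is that you invoke the square-zero hypothesis twice in $J_2$ (first for $T_2(rm')^2=Q_1(rm')^*$, then for $T_2(rm')$ itself), whereas the paper first concludes $Q_1(rn)=0$ in $J_1\cong_\La C_0$ and then $T_2(rn)=0$ in $J_2\cong_\La C_0^*$.
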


\begin{proof}
We pass {f}rom $J$ to the isotope $\wt J$ of Lem.~\ref{isotope}.
Since by that lemma all our assumptions are maintained, it follows
{f}rom (ii) of Lem.~\ref{isotope} that it suffices to prove
$T_2(RM)=0$. To do so, let $r\in R$ and $n\in M$ be homogeneous
elements. By Prop.~\ref{tec}(iv),
$L\big(Q_1(rn)\big)^2=L(Q_1(rn)^2)=0$, which implies that
$Q_1(rn)=0$ by our assumptions. But then by Prop.~\ref{tec}(v),
$L\big(T_2(rn)\big)^2=L(T_2(rn)^2)=L(Q_1(rn)^*)=0$. Since $C_0
\cong_\La J_1$ does not contain nonzero elements with square $0$,
the same holds for $C_0^* = L(J_2)$. Hence $T_2(rn)=0$ follows, and
this implies $T_2(RM)=0$ as desired.
\end{proof}

\begin{lemma}\label{Cbreve} Suppose $J$ is graded-triangulated by
$(u; e_1, e_2)$. Then $C':=C\{c-c^\pi : c\in C\}C=C[C,C]C$ is a
$(\pi , ^-)$-graded ideal of $C$ such that $C'M\subseteq Cu$.
\end{lemma}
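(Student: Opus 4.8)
The plan is to prove the three assertions in turn: first the identity of the two descriptions of $C'$, then that $C'$ is a $(\pi,{}^-)$-graded ideal, and finally the inclusion $C'M\subseteq Cu$, which is where the real content lies.

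For the equality $C\{c-c^\pi:c\in C\}C=C[C,C]C$ I would exploit that the generators $C_0=L(J_1)$ are $\pi$-symmetric: by \ref{pi} we have $L(x_1)^\pi=L(x_1)$. For the inclusion $\supseteq$, observe that for $a,b\in C_0$ one has $(ab)^\pi=b^\pi a^\pi=ba$, so $[a,b]=ab-(ab)^\pi\in\{c-c^\pi\}$; hence $[C_0,C_0]\subseteq C\{c-c^\pi\}C$, and working modulo this two-sided ideal the images of the generators commute, so the quotient of $C$ by it is commutative and $[C,C]$, thus $C[C,C]C$, lands inside it. For $\subseteq$, writing any monomial $a=a_1\cdots a_n$ in generators one sees that $a-a^\pi=a_1\cdots a_n-a_n\cdots a_1$ is a sum of commutators (reverse the word by adjacent transpositions), hence $c-c^\pi\in C[C,C]C$ for every $c$ by linearity. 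That $C'$ is graded, $\pi$-invariant and ${}^-$-invariant then follows formally: $[C,C]$ is a graded subspace since the grading is compatible with multiplication, while $[a,b]^\pi=[b^\pi,a^\pi]$ and $\overline{[a,b]}=[\bar a,\bar b]$ together with the fact that $\pi$ is a degree-$0$ anti-automorphism and ${}^-$ a degree-$0$ automorphism of $C$ give stability of $C[C,C]C$ under both.

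For the inclusion $C'M\subseteq Cu$ I would pass to the quotient module $V:=M/Cu$. The point is that $Cu$ is stable not only under $C$ (clear) but also under $C^*$: from \ref{6} we have $c^*u=c^\pi u\in Cu$, and from \ref{05} the subalgebras $C$ and $C^*$ commute as operators on $M$ (indeed $[L(x_1),L(y_2)]=0$ for $x_1\in J_1,\ y_2\in J_2$ generates $[C,C^*]=0$), so $C^*(Cu)=C(C^*u)\subseteq Cu$. Thus both $C$ and $C^*$ act on $V$; write $\rho$ for the induced representation of $C$ on $V$. Since $C'=C[C,C]C$ and $\ker\rho=\{c\in C:cM\subseteq Cu\}$ is a two-sided ideal of $C$, it suffices to show $\rho([C,C])=0$, and because $C_0$ generates $C$ this in turn reduces to showing that the operators $\rho(L(x_1))$, $x_1\in J_1$, commute on $V$.

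The key computation, and the main obstacle, is precisely this last reduction. Here I would use \ref{7} with $i=1$, namely $(x_1^*-x_1)\cdot m=-\Gamma_1(x_1;m)u$ with $\Gamma_1(x_1;m)\in C$; since $x_1^*\cdot m=L(x_1^*)m$ and $x_1\cdot m=L(x_1)m$, this says $(L(x_1)-L(x_1^*))m\in Cu$, i.e. $\rho(L(x_1))=\rho(L(x_1^*))$ on $V$. Now $L(x_1^*)\in L(J_2)\subseteq C^*$ commutes with $L(y_1)\in C$ on $M$, hence on $V$, so for $x_1,y_1\in J_1$
\begin{equation*}
\rho(L(x_1))\rho(L(y_1))=\rho(L(x_1^*))\rho(L(y_1))=\rho(L(y_1))\rho(L(x_1^*))=\rho(L(y_1))\rho(L(x_1)),
\end{equation*}
so the generators of $\rho(C)$ commute and $\rho(C)$ is commutative. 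Therefore $\rho([C,C])=0$, giving $[C,C]M\subseteq Cu$ and finally $C'M=C[C,C]CM\subseteq Cu$ since $Cu$ is $C$-stable. The conceptual heart is the replacement of $L(x_1)$ by $L(x_1^*)$ modulo $Cu$ via \ref{7}, which converts the apparent noncommutativity of $C$ acting on $M$ into genuine commutativity on the quotient $M/Cu$.
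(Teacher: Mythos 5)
Your proof is correct, and it takes essentially the route the paper intends: the paper disposes of the first part as ``straightforward'' and cites \cite[1.6.13]{MNSimple} for $C'M\subseteq Cu$, and the content of that citation is precisely your key step, namely using \ref{7} to get $\big(L(x_1)-L(x_1^*)\big)m=\Gamma_1(x_1;m)u\in Cu$ and then the commutation $[C,C^*]=0$ from \ref{05} (together with $C^*u\subseteq Cu$ from \ref{6}) to conclude $[C,C]M\subseteq Cu$. Your packaging of this via the quotient module $M/Cu$ and the induced commutative representation is a clean, self-contained reconstruction of the cited computation.
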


\begin{proof} The first part of the lemma is straightforward. That $C'M \subseteq Cu$ follows {f}rom
\cite[1.6.13]{MNSimple}.
\end{proof}

\begin{assum} \label{ass2} $J$ is graded-triangulated with grading group
$\La$  and fulfills {\rm Assumption~\ref{ass1}} with respect to a
triangle $(u; e_1, e_2) \subseteq J^0$.
\end{assum}

\begin{lemma} \label{lemma_Cbreve} Suppose $J$ fulfills {\rm Assumption~\ref{ass2}}. If $R$ is a maximal $(\pi , {}^-)$-invariant and graded
ideal of $C$ with $R\cap C_0=0$, then $M=Cu$ or $C_0$ does not
contain nonzero homogeneous elements with trivial square.\end{lemma}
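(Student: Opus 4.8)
The plan is to argue by contradiction using the maximality of $R$ together with the ideal $C'=C[C,C]C$ from Lemma~\ref{Cbreve}. Suppose that $M\ne Cu$ \emph{and} that $C_0$ contains a nonzero homogeneous element $c_0$ with $c_0^2=0$ (i.e.\ $x_1^2=0$ for the corresponding $x_1\in J_1$, via the isomorphism $J_1\cong_\La C_0$). The goal is to produce a $(\pi,{}^-)$-invariant graded ideal of $C$ strictly containing $R$ which still meets $C_0$ trivially, contradicting maximality. First I would observe that since $M\ne Cu$, the ideal $C'$ is nonzero and, by Lemma~\ref{Cbreve}, satisfies $C'M\subseteq Cu$; this is the structural fact that lets us detect when the ``off-diagonal'' part $M$ exceeds the hermitian part $Cu$. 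The natural candidate for a larger ideal is something built from $R$ and $C'$, and the key point to verify is that the enlarged ideal still intersects $C_0$ in zero.

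The main technical step is to control $Q_2$ on products coming from any $\pi$-invariant graded ideal $R$ with $R\cap C_0=0$. Here Proposition~\ref{tec} does the heavy lifting: parts (iii)--(v) give $Q_2(rm)=0=Q_2(rm,m)$, $Q_1(rm)^2=0$ and $T_2(rm)^2=Q_1(rm)^*$ for all $r\in R$, $m\in M$. The hypothesis ``$C_0$ has no nonzero homogeneous squares-to-zero element'' feeds directly into Lemma~\ref{fund}, whose conclusion $Q_2(RM,m)=0$ for invertible homogeneous $m$ is exactly the vanishing I want; and Prop.~\ref{tec}(vi) upgrades this to $Q_2(RM,M)=0$ in the relevant cases. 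So under the \emph{negation} of the lemma's conclusion, the machinery forces strong degeneracy of $R$ acting on $M$. I would then use these vanishing relations to show that the ideal generated by $R$ together with a suitable piece of $C'$ (or an element witnessing $M\ne Cu$) remains disjoint from $C_0$: the products that could land in $C_0$ are controlled by $T_i$ and $Q_i$ applied to elements of $RM$, all of which vanish by Prop.~\ref{tec}(ii)--(vi).

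Concretely, I expect the contradiction to come by exhibiting a nonzero homogeneous element lying in a $(\pi,{}^-)$-invariant graded ideal properly larger than $R$ but meeting $C_0$ trivially. The construction should take an element $c\in C$ with $cu\notin$ the span forced by $R$ (available precisely because $M\ne Cu$), form the $(\pi,{}^-)$-invariant graded ideal it generates together with $R$, and use the relations $r+r^\pi=0$, $[r,C]=0$ from Prop.~\ref{tec}(i) to keep the symmetrization under $\pi$ and $^-$ inside the candidate ideal. The vanishing of $Q_2(RM,M)$ and of the traces $T_i(ru)$ guarantees that no diagonal component $L(x_1)$ is produced, so the intersection with $C_0$ stays zero, contradicting maximality of $R$.

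The hardest part will be organizing the case analysis so that both hypotheses ($M\ne Cu$ and existence of a nonzero homogeneous square-zero element in $C_0$) are used simultaneously to build the strictly larger ideal. The subtlety is that Lemma~\ref{fund} and Prop.~\ref{tec}(vi) require either $M=Cu$ or the square-zero-free condition on $C_0$; under the negation we have \emph{neither}, so the clean vanishing $Q_2(RM,M)=0$ is not immediately available and must instead be coaxed out through the Laurent-series completion $\wh J$ and the isotope $\wt J$ of Lemmas~\ref{laurent} and~\ref{isotope}, exactly as in the reductions of Prop.~\ref{tec}. Passing to $\wh J$ to replace an arbitrary $m$ by the invertible $\wh m=u+tm$, and to $\wt J$ to normalize $\wt u=m$, should let me reduce every needed $Q_2$- or $T_2$-vanishing to the case $m=u$, where it follows from Prop.~\ref{tec}(ii). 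Assembling these reductions into a single enlargement of $R$ that violates maximality is the crux of the argument.
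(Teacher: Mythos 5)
Your plan takes a contradiction-by-ideal-enlargement route that does not match the paper's argument and, more importantly, has a genuine gap at its crux. The paper's proof is direct and short: by maximality, $\breve{C}:=C/R$ is $(\pi,{}^-)$-graded-simple, so the image $\breve{C}'$ of the ideal $C'=C[C,C]C$ of Lemma~\ref{Cbreve} is either all of $\breve{C}$ or zero. If $\breve{C}'=\breve{C}$, then $1=c'+r$ with $c'\in C'$, $r\in R$, and squaring together with $r^2=0$ (Prop.~\ref{tec}(i)) forces $1\in C'$, whence $M=C'M\subseteq Cu$ by Lemma~\ref{Cbreve}. If $\breve{C}'=0$, then $\breve{C}$ is commutative with trivial $\breve\pi$, so $(\breve{C},{}^-)$ is a graded-simple commutative algebra; Cor.~\ref{patrick} shows $\breve{C}$ is division-graded or a sum of two such with the exchange automorphism, hence has no nonzero homogeneous square-zero elements, and $R\cap C_0=0$ pulls this back to $C_0$. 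This dichotomy on $\varphi(C')$ in the graded-simple quotient is the key idea, and it is absent from your proposal; so is Cor.~\ref{patrick}, which is what actually delivers the second alternative.

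Concretely, your plan never produces the object it needs. You assume $M\ne Cu$ and that $C_0$ has a nonzero homogeneous square-zero element, and you propose to build a $(\pi,{}^-)$-invariant graded ideal strictly larger than $R$ still meeting $C_0$ trivially; but you give no construction beyond ``take an element $c$ with $cu$ outside the span forced by $R$,'' which is not defined ($M\ne Cu$ supplies an element of $M$, not of $C$). Moreover, as the proof of Theorem~\ref{maintheorem} makes clear, $R$ is maximal among \emph{proper} $(\pi,{}^-)$-invariant graded ideals (this is what makes $C/R$ graded-simple), so the only ideal strictly containing $R$ is $C$ itself and there is nothing to enlarge to. Finally, you correctly note that Lemma~\ref{fund} and Prop.~\ref{tec}(vi) each require one of the two alternatives you are negating, but your remedy---rerunning the Laurent-series and isotope reductions---is circular: those reductions are precisely the mechanism inside Prop.~\ref{tec} and Lemma~\ref{fund}, and they do not yield $Q_2(RM,M)=0$ when both alternatives fail. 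The statement being proved here is exactly the disjunction that makes that machinery applicable later; it cannot be obtained by presupposing the machinery.
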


\begin{proof} If $R$ is a maximal $(\pi , {}^-)$-invariant and graded
ideal of $C$ with $R\cap C_0=0$, consider the $(\pi ,
{}^-)$-graded-simple algebra
 $\breve{C}:=C/R$ and let $\varphi : C\rightarrow \breve{C}$ be the
canonical epimorphism. Since $\varphi$ is homogeneous of degree $0$,
the ideal
$\varphi(C')=\breve{C}'=\breve{C}[\breve{C},\breve{C}]\breve{C}$ is
 graded   and invariant under the induced maps $\breve{\pi}$ and
$c\mapsto \breve{\overline{c}}$, whence either
$\breve{C}'=\breve{C}$ or $\breve{C}'=0$. If $\breve{C}'=\breve{C}$,
then $\breve{1}\in \breve{C}'$ and so $1=c'+r$ where $c'\in C'$. Now
$1^2=1=c'^2+c'r+rc'+r^2$, but $r^2=0$ by Prop.~\ref{tec}(i). Hence
$1\in C'$ which by Lem.~\ref{Cbreve} implies $M\subseteq Cu$, so
$M=Cu$.

Otherwise $\breve{C}'=0$. Then $[\breve{C},\breve{C}]=0$, that is,
$\breve{C}$ is commutative and hence $\breve{\pi}$ is trivial. In
this case $(\breve{C},\breve{^-})$ is graded-simple. By
Cor.~\ref{patrick}, $\breve{C}$ is either division-graded or the
direct sum of two copies of a division-graded algebra with the
exchange automorphism. In particular,  $\breve{C}$ does not contain
nonzero homogeneous elements with trivial square. So neither does
the subspace $\breve{C}_0=\varphi(C_0)$ nor $C_0$ since $R\cap
C_0=0$. \end{proof}

\begin{proposition} \label{main}
Suppose $J$ is a graded-triangulated Jordan triple system for which
$J_1$ is graded-simple. Then {\rm Assumption~{\rm \ref{ass2}}}
holds. If $R$ is a proper $(\pi,{}^-)$-invariant graded ideal of $C$
then $R\cap C_0=0$ and
 $Q_2(RM,m)\allowbreak =0$ for  invertible homogeneous $m\in
M$. \end{proposition}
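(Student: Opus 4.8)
The plan is to establish the three assertions in turn, reducing each to results already proved.

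First I would verify Assumption~\ref{ass2}, that is, the injectivity of $L\colon J_1\to C_0$ required by Assumption~\ref{ass1}. By \ref{1} we have $L(P(x_1)y_1)=L(x_1)\overline{L(y_1)}L(x_1)=P(L(x_1))L(y_1)$, so $L$ is a homomorphism of graded Jordan triple systems and $\ker L$ is a graded ideal of $J_1$. Since $L(e_1)=\Id_M\neq 0$ (note $M\neq 0$, as it contains the tripotent $u$), the map $L$ is nonzero, so $\ker L\neq J_1$; graded-simplicity of $J_1$ then forces $\ker L=0$. Thus $L$ is injective, Assumption~\ref{ass1} and hence Assumption~\ref{ass2} holds, and moreover $C_0=L(J_1)\cong_\La J_1$ is graded-simple as a Jordan triple system, with $1_C=L(e_1)\in C_0$.

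Next, for a proper $(\pi,{}^-)$-invariant graded ideal $R$ of $C$ I would show $R\cap C_0=0$ by checking that $R\cap C_0$ is a graded ideal of the Jordan triple system $C_0$. It is a graded submodule; it is carried back into $C_0$ by the ample-subspace property $cC_0c^\pi\subseteq C_0$ together with $\overline{C_0}=C_0$ and $C_0\subseteq H(C,\pi)$, and it is carried into $R$ by the two-sided ideal property of $R$; the relevant triple products lie in $C_0$ because $C_0$ is a subsystem and in $R$ because $R$ is an ideal. Since $C_0$ is graded-simple, $R\cap C_0$ is either $0$ or $C_0$, and the latter would put $1_C=L(e_1)$ into $R$, forcing $R=C$ and contradicting properness. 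Hence $R\cap C_0=0$.

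For the vanishing $Q_2(RM,m)=0$ the idea is to feed $R$ into either Prop.~\ref{tec}(vi) or Lem.~\ref{fund}, whose hypotheses are governed by the dichotomy in Lem.~\ref{lemma_Cbreve}. By Zorn's lemma the family of $(\pi,{}^-)$-invariant graded ideals $S$ with $S\cap C_0=0$ containing $R$ (nonempty by the previous step, and closed under unions of chains) has a maximal element $R'$, which is then maximal among \emph{all} such ideals. Applying Lem.~\ref{lemma_Cbreve} to $R'$ yields the alternative: either $M=Cu$, or $C_0$ contains no nonzero homogeneous element with trivial square. This conclusion is intrinsic to $J$, so it applies to the original $R$: in the first case Prop.~\ref{tec}(vi) gives $Q_2(RM,M)=0$, and in the second case Lem.~\ref{fund}, whose remaining hypotheses $R\cap C_0=0$ and graded-triangulation are in force, gives $Q_2(RM,m)=0$ for invertible homogeneous $m$. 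Either way the desired identity holds.

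The main obstacle is precisely this last step. The clean vanishing lemmas demand either $M=Cu$ or the absence of homogeneous square-zero elements in $C_0$, and neither follows directly from graded-simplicity of $J_1$, since a graded-nondegenerate system may still possess a nonzero $x_1$ with $P(x_1)e_1=0$. The point to exploit is that Lem.~\ref{lemma_Cbreve}, proved by passing to the graded-simple quotient $\breve C=C/R'$ and analysing its commutativity via Cor.~\ref{patrick}, supplies exactly this dichotomy as a statement about $J$ itself, which can then be transported back to the given, not-necessarily-maximal, ideal $R$.
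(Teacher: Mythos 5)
Your proof is correct and follows essentially the same route as the paper: injectivity of $L$ from graded-simplicity of $J_1$, then $R\cap C_0=0$ because $R\cap C_0$ is a graded ideal of the graded-simple triple system $C_0\cong_\La J_1$ not containing $1$, and finally the dichotomy of Lem.~\ref{lemma_Cbreve} feeding into Prop.~\ref{tec}(vi) or Lem.~\ref{fund}. Your explicit Zorn argument producing a maximal ideal $R'$, together with the observation that the resulting dichotomy is a statement about $J$ itself and hence transfers back to the given $R$, is a point the paper leaves implicit, and is a welcome clarification.
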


\begin{proof} Since $J_1$ is graded-simple and the specialization
$L: J_1 \to C_0, \,x_1\mapsto L(x_1)$ is nontrivial, it is
injective, proving Assumption~\ref{ass2}. It is then an isomorphism
of graded Jordan triple systems, whence $C_0$ is also graded-simple.
Therefore the graded ideal $R\cap C_0$ of the Jordan triple system
$C_0$ must be either $0$ or $C_0$. But if $R\cap C_0=C_0$, then
$C_0\subseteq R$ which implies $C=R$ contradicting that $R$ is
proper. So $R\cap C_0=0$. By Lem.~\ref{lemma_Cbreve}, $M=Cu$ or
$C_0$ does not contain nonzero homogeneous elements of trivial
square. If $M=Cu$, then $Q_2(RM,M)=0$ by Prop.~\ref{tec}(vi).
Otherwise, it follows {f}rom Lem.~\ref{fund} that $Q_2(RM,m)=0$ for
invertible homogeneous $m\in M$. \end{proof}

Now we are ready to establish our main result.

\begin{theorem} \label{maintheorem} {\rm (\cite[4.3]{MNSimple}
for $\La=0$) } Let $J$ be a graded-simple-triangulated Jordan triple
system satisfying one of the following conditions
\begin{enumerate}
\item[{\rm (a)}] every nonzero $m\in M$ is a linear combination of invertible
homogeneous  elements, or
\item[{\rm (b)}] the grading group $\La$ is torsion-free. \end{enumerate}

\noindent Then $(C,\pi, {}^-)$ is graded-simple, $u$ is
$C$-faithful, and exactly one of the following two cases holds:
\begin{enumerate}
\item[{\rm (i)}]  $C$ is not commutative and $M=Cu$. In this case, $\pi\ne \Id$ and $J$ is graded isomorphic to
the graded-simple diagonal hermitian matrix system
 $\rmH_2(C,C_0, \pi , ^-)$;

\item[{\rm (ii)}] $C$ is commutative. In this case, $\pi=\Id$ and
$J$ is graded isomorphic to the graded-simple ample Clifford system
$\AC(q, M, S, C,\allowbreak {}^-, C_0)$, where
$q(m)=L\big(Q_1(m)\big)$ and $S(m)=\overline m$.
\end{enumerate}
In both cases, the triangles are preserved by the isomorphisms.
\end{theorem}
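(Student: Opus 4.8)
The plan is to establish the three assertions in turn — the graded-simplicity of $(C,\pi,{}^-)$, the $C$-faithfulness of $u$, and the commutative/noncommutative dichotomy — with the coordinatization theorems \ref{hermitian} and \ref{clifford} entering only at the last step. First I would record the facts that switch on the machinery of this section. Since $J$ is graded-simple-triangulated, Prop.~\ref{peirce}(ii) shows $J_1$ is graded-simple and $Q_2$ is graded-nondegenerate, so Assumption~\ref{ass2} holds by Prop.~\ref{main}; moreover $J$ is graded-nondegenerate (Prop.~\ref{nonde}), hence faithfully triangulated (Prop.~\ref{peirce}(i)), so $C$ acts faithfully on $M$ and $\GR Q_2=0$. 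To prove $(C,\pi,{}^-)$ graded-simple I take a proper $(\pi,{}^-)$-invariant graded ideal $R$ and aim to upgrade the conclusion of Prop.~\ref{main} (which gives $Q_2(RM,m)=0$ only for invertible homogeneous $m$) to $Q_2(RM,M)=0$. Because $C_0\cong_\La J_1$ is graded-simple and contains $1$, one gets $R\cap C_0=0$; enlarging $R$ to a maximal such ideal and invoking Lem.~\ref{lemma_Cbreve} leaves two cases. If $M=Cu$, then $Q_2(RM,M)=0$ is immediate from Prop.~\ref{tec}(vi)(a). Otherwise $C_0$, hence $J_1$, has no nonzero homogeneous element of trivial square; under hypothesis (a) Lem.~\ref{fund} gives $Q_2(RM,m)=0$ for invertible homogeneous $m$, and writing an arbitrary $m$ as a combination of such elements extends this to all of $M$, whereas under hypothesis (b) I would fix a total order on the torsion-free group $\La$ and compare top-degree components: the degree-$2\la^*$ part of $x_1^2=P(x_1)e_1$ is $(x_1^{\la^*})^2$, so the homogeneous square-zero-free condition forces $J_1$ to have no nonzero square-zero element at all, and Prop.~\ref{tec}(vi)(b) applies. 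In every case, combining $Q_2(RM,M)=0$ with $Q_2(rm)=0$ (Prop.~\ref{tec}(iii)) puts each homogeneous element of $RM$ into $\GR Q_2=0$, so $RM=0$ and, by faithfulness, $R=0$.

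For $C$-faithfulness I would consider $N=\{c\in C: cu=0\}$, which is graded. Identity \ref{3} gives $c+c^\pi=0$ for $c\in N$, so $N$ is $\pi$-invariant, and $\overline u=u$ makes it $^-$-invariant; being a $\pi$-stable left ideal, $N$ is two-sided, hence $0$ or $C$ by graded-simplicity, and $u\neq 0$ excludes $N=C$. For the dichotomy, if $C$ is not commutative then $C'=C[C,C]C$ is a nonzero $(\pi,{}^-)$-invariant graded ideal (Lem.~\ref{Cbreve}), so $C'=C$; since $1\in C$ and $C'M\subseteq Cu$ this forces $M=Cu$, and Th.~\ref{hermitian} identifies $J$ with the diagonal graded-simple $\rmH_2(C,C_0,\pi,{}^-)$, while $\pi\neq\Id$ because a trivial reversal involution \ref{pi} would make $C$ commutative.

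If $C$ is commutative, then $\pi=\Id$ by \ref{pi}, and I would verify the hypotheses of the Clifford coordinatization theorem \ref{clifford}. By \ref{7} the obstruction $(x_1-x_1^*)\cdot m$ equals $\Gamma_1(x_1;m)u$, so by $C$-faithfulness it suffices to show $\Gamma_1(x_1;m)=0$. After reducing to invertible homogeneous $m$ by passing to the completion $\hj$ and then to the isotope $\wt J$ (Lem.~\ref{laurent}, Lem.~\ref{isotope}), identity \ref{8} — whose right-hand side consists entirely of commutators and hence vanishes for commutative $C$ — gives $\Gamma_1(x_1;m)^2 m=0$; since a commutative graded-simple $C$ is a graded-field or a sum $B\boxplus B$ with the exchange automorphism (Cor.~\ref{patrick}), invertibility of the homogeneous $m$ then forces $\Gamma_1(x_1;m)=0$. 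With $(x_1-x_1^*)\cdot m=0$ in hand and $u$ being $C$-faithful, Th.~\ref{clifford} yields $J=J_q\cong_\La \AC(q,M,S,C,{}^-,C_0)$, where $q(m)=L\big(Q_1(m)\big)$ and $S(m)=\overline m$; uniqueness of the Peirce decomposition then gives $K_i=J_i$, $N^{\mathrm{gr}}=M$ and $D=C$. As the two cases are complementary, exactly one occurs.

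I expect the graded-simplicity of $(C,\pi,{}^-)$ to be the main obstacle, since it is precisely there that the two hypotheses (a) and (b) are indispensable: the delicate points are the passage, under torsion-freeness, from the homogeneous square-zero-free condition to the genuinely square-zero-free condition needed for Prop.~\ref{tec}(vi)(b), and — in the commutative case — the proof that $\Gamma_1\equiv 0$, where the reduction to invertible $m$ via $\hj$ and $\wt J$ must be carried out carefully so that identity \ref{8} and Cor.~\ref{patrick} can be brought to bear.
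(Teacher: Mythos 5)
Your steps (I)--(III) track the paper's proof almost verbatim: the Zorn/maximal-ideal argument combined with Prop.~\ref{main}, Lem.~\ref{lemma_Cbreve}, Prop.~\ref{tec}(vi) and (in case (b)) the total-order/top-degree upgrade from ``no homogeneous square-zero elements'' to ``no square-zero elements'' is exactly the paper's route to graded-simplicity of $(C,\pi,{}^-)$; your $C$-faithfulness argument (deducing two-sidedness of $\{c:cu=0\}$ from its $\pi$-invariance) is a harmless variant of the paper's explicit computation $zCu=zC^\pi u=C^*zu$; and the noncommutative case is identical. The graded-field sub-case of (IV) is also fine.

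The genuine gap is in step (IV) when $(C,{}^-)$ is graded-simple but $C$ is not: by Cor.~\ref{patrick} one then has $C\cong_\La B\boxplus B$ with the exchange automorphism, and here your argument breaks down twice. First, the reduction to ``invertible homogeneous $m$'' is not available: the element $\wh m=u+tm$ of $\hj$ is invertible but \emph{not} $\La$-homogeneous unless $m\in M^0$, and Lem.~\ref{isotope}(iv) requires a homogeneous element for the isotope $\wt J$ to remain graded-triangulated, so you cannot carry the graded structure of $C$ (Cor.~\ref{patrick}) through the chain $\hj\to\wt J$. Second, even granting an invertible homogeneous $m$, the implication $\Gamma_1(x_1;m)^2m=0\Rightarrow\Gamma_1(x_1;m)=0$ fails in $B\boxplus B$: a nonzero homogeneous element $(b,0)$ is not invertible, so non-invertibility of $\Gamma_1(x_1;m)$ gives nothing, and from $\Gamma_1(x_1;m)^2m=0$ one only gets $\epsilon m=0$ or $\overline{\epsilon}m=0$ for the central idempotent $\epsilon=1_B$ --- and you have not shown that invertibility of $m$ excludes this. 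The paper's proof avoids both problems by a symmetrization you omit: it first proves $\Gamma_1(y_1;n)=0$ for \emph{${}^-$-invariant} homogeneous $y_1,n$ (such $\Gamma_1$'s are ${}^-$-invariant, hence of the form $(b,b)$, for which homogeneous and non-invertible does force zero), and then invokes the identity $(x_1-x_1^*)\cdot m=\epsilon(y_1-y_1^*)\cdot n_1+\overline{\epsilon}(z_1-z_1^*)\cdot n_2$ with ${}^-$-invariant data from the proof of \cite[4.4]{MNSimple} to treat arbitrary $x_1,m$. Some such decomposition (or an argument that $\epsilon\, m\ne 0\ne\overline{\epsilon}\,m$, e.g.\ by noting $\epsilon u\ne 0$ via $C$-faithfulness after passing to $u+tm$) must be supplied for your proof of case (ii) to be complete.
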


{\bf Remarks.} (1) We point out that the assumptions (a) or (b) are
only needed to show that $(C, \pi, {}^-)$ is graded-simple. Our
proof shows that {\it any graded-triangulated Jordan triple system
with a graded-simple $(C,\pi,{}^-)$ satisfies} (i) {\it or} (ii)! We
also note that hermitian matrix systems are of course also defined
for commutative coordinate algebras $C$. But in the commutative case
they are isomorphic to ample Clifford systems. \sm

(2) This theorem generalizes \cite[Prop.~4.3]{MNSimple}: $\La=0$ is
a special case of our assumption (b). Our proof is slightly
different {f}rom the proof given in \cite{MNSimple} and in fact
corrects a small inaccuracy there: The Isotope Trick
\cite[4.1]{MNSimple} cannot be applied since $\wt J$ does not
necessarily inherit simplicity {f}rom $J$.

(3) The assumption (a), which, admittedly, looks somewhat funny at
first sight, is fulfilled in the most important application of the
theorem, the division-triangulated case (Cor.~\ref{divgradclassi}).

\begin{proof} We will proceed in four steps.

(I) {\it $(C,\pi, {}^-)$ is graded-simple.} Let $R$ be a maximal
$(\pi , {}^-)$-invariant graded ideal of $C$. Such an ideal $R$
exists by Zorn. Our claim (I) then means $R=0$. This will follow if
we can show $rm=0$ for homogeneous $r\in R$ and $m\in M$. Recall
that $Q_2$ is graded-nondegenerate by Prop.~\ref{peirce}(ii). It is
therefore sufficient to prove $$ Q_2(rm)=0=Q_2(rm, M) \quad\hbox{for
homogeneous $r\in R$ and $m\in M$}. \eqno{(1)}$$ Since $J_1$ is
graded-simple by Prop.~\ref{peirce}(ii), it follows {f}rom
Prop.~\ref{main} that $J$ satisfies  Assumption \ref{ass2}. Since
$R$ is in particular proper, it also follows {f}rom Prop.~\ref{main}
that $C_0\cap R=0$ and $Q_2(rm, n)=0$ for invertible homogeneous
$n\in M$. Also, $Q_2(rm)=0$ by Prop.~\ref{tec}(iii). Thus, (1) holds
in case (a).

We also know {f}rom Lem.~\ref{lemma_Cbreve} that $M=Cu$ or $C_0$
does not contain nonzero homogeneous elements with trivial square.
But if $M=Cu$ then $Q_2(rm,M)=0$ by Prop.~\ref{tec}(vi), hence again
(1) follows. We can therefore assume that $C_0$ does not have
nonzero homogeneous elements with trivial square. We will use our
assumption (b) to prove (1) in this case. We claim that in fact
$C_0$ does not contain nonzero elements with trivial square: Let
$x=\sum x^{\la_i}\in C_0$, with $0\neq x^{\la_i}\in C_0^{\la_i}$,
such that $x^2=0$. Since $\La$ is torsion-free, it can be ordered
(as a group) and we can therefore consider $(x^\la)^2$ for $\la
={\rm max}\{\la_i
 \}$. But $x^2=0$ implies $(x^\la)^2=0$, hence
$x^\la=0$ by the absence of nonzero homogeneous elements of trivial
square, contradiction. Since $(J_1, e_1) \cong_\La (C_0, 1)$ as
triple systems with tripotents, the subspace $J_1$ does not contain
nonzero elements with trivial square. But then $Q_2(rm,M)=0=Q_2(rm)$
follows {f}rom (vi) and (iii) of Prop.~\ref{tec}.

(II) {\it $u$ is $C$-faithful.} By (I), we have that $C$ is $(\pi ,
^-)$-graded-simple. Now, $Z=\{z\in C:\, zu=0\}$ is obviously a left
ideal of $C$. It is also a right ideal since for $d\in C$ and $z\in
Z$ we have $zCu=zC^\pi u$ (by \ref{6}) $=zC^* u$ (by \ref{05}) $=C^*
zu=0$. Also, $Z$ is  graded  since $u\in M^0$, and finally it is
$(\pi , ^-)$-invariant since $z^\pi u=(zu)^*=0$ by \ref{6} and
$\overline{z}u=\overline{z} \,
 \overline{u} =\overline{zu}=0$. Then $Z$ must be $C$ or $0$.
But note that $Z\neq C$ since $1\notin Z$. Hence $Z=0$, that is, $u$
is $C$-faithful.

We will now distinguish the two cases $\pi \neq \text{Id}$ and $\pi
= \text{Id}$.

(III) {$\pi \neq \text{Id}$}:  Then $C$ is noncommutative. Indeed,
since $C_0\subset H(C,\pi)$ generates $C$ as an algebra, $C$ is
commutative iff $\pi = \Id$.  Also, there exists a homogeneous $c\in
C$ such that $c^\pi \neq c$, and then the $(\pi , ^-)$-graded ideal
$C'=C\{c-c^\pi : c\in C\}C$ (Lem.~\ref{Cbreve}) is nonzero and hence
equals $C$, in particular $1 \in C'$. By Lem.~\ref{Cbreve} again,
this implies $M=Cu$. By Th.~\ref{hermitian} $J$ is then graded
isomorphic to the
 hermitian matrix system $\rmH_2(C,C_0, \pi , ^-)$
as claimed in (i).

(IV) {$\pi = \text{Id}$:} Then $C$ is commutative and
${}^-$-graded-simple. We will prove that in this case $J$ is graded
isomorphic to an ample Clifford system. By $C$-faithfulness, this
will follow {f}rom Th.~\ref{clifford} as soon as we have established
$(x_1-x_1^*)\cdot m=0$ for all $x_1\in J_1$ and $m\in M$. Now by
\ref{7} we know $(x_1-x_1^*)\cdot m =\Gamma_1(x_1;m)u$. By linearity
of $\Ga_1$ in $x_1$ and $m$, it therefore remains to prove
\begin{gather} \Gamma_1(x_1;m)=0  \hbox{ for homogeneous
$x_1\in J_1$ and $m\in M$. } \tag{$2$}\end{gather} 
Since $C$ is commutative, \ref{8} shows $\Gamma_1(x_1;m)^2m=0$, so
$\Gamma_1(x_1;m)$ is never invertible. If $(C,^-)$ is graded-simple,
it is a division-graded algebra and so (2) holds. Otherwise, by
Cor.~\ref{patrick}, identify $C=A\boxplus A$ with the direct sum of
two copies of a division-graded commutative algebra $A$ and ${}^-$
is the exchange automorphism. Then we have  that
\begin{gather}
\Gamma_1(x_1;m)=0 \hbox{ for all homogeneous ${}^-$-invariant $x_1
\in J_1$ and $m\in M$}. \tag{$3$}\end{gather} Let $x_1\in J_1$ and
$m\in M$ be arbitrary homogeneous elements. Since $C=A \boxplus A$
we get $1=\epsilon +\overline{\epsilon}$ for orthogonal idempotents
$\epsilon$ and $\overline{\epsilon}$ in $C^0$, namely $\epsilon =
1_A$. 
We now claim that
$$(x_1-x_1^*)\cdot m=\epsilon (y_1-y_1^*)\cdot
n_1+\overline{\epsilon}(z_1-z_1^*)\cdot n_2 \eqno{(4)}$$ for some
homogeneous $y_1=\overline{y_1}$ and  $z_1=\overline{z_1}$ in $J_1$
and $n_i=\overline{n_i}\in M$. The proof of (4) given in the proof
of \cite[4.4]{MNSimple} for $\La=0$ also works in our setting. But
(4) together with (3) implies (2), finishing the proof of the
theorem. \end{proof}

From the previous Theorem \ref{maintheorem} together with
Prop.~\ref{hermconclusion} and Prop.~\ref{cliffordsimpl}(iv) we get
the following classification.

\begin{corollary}\label{classification} {\rm  (\cite[4.4]{MNSimple} for
$\La=0$)} A graded-simple-triangulated Jordan triple system
satisfying {\rm (a)} or {\rm (b)} of\/ {\rm Th.} \ref{maintheorem}
is graded isomorphic to one of the following triple systems:

\noindent  non-polarized
\begin{enumerate}
\item[{\rm (I)}] diagonal $\rmH_2(A,A_0, \pi , ^-)$ for a graded-simple
noncommutative $A$ with graded involution $\pi$ and automorphism
${}^-$;

\item[{\rm (II)}] $\Mat_2(B)$ with $P(x)y=x\overline{y}x$
for a noncommutative graded-simple associative unital $B$ with
graded automorphism $^-$ and
$\overline{(y_{ij})}=(\overline{y_{ij}})$ for $(y_{ij})\in
\Mat_2(B)$;

\item[{\rm (III)}] $\Mat_2(B)$ with $P(x)y=x\overline{y}^tx$
for a noncommutative graded-simple associative unital $B$ with
graded involution $\iota$ and $\overline{(y_{ij})}=(y_{ij}^\iota)$
for $(y_{ij})\in \Mat_2(B)$;

\item[{\rm (IV)}] $\AC(q, S, F_0)$ for a graded-nondegenerate $q$
over a graded-field $F$ with Clifford-ample subspace $F_0$;
\end{enumerate}
\noindent or polarized
\begin{enumerate}
\item[{\rm (V)}] $\rmH_2(B,B_0, \pi)\oplus \rmH_2(B,B_0, \pi)$ for a
diagonal hermitian matrix system $\rmH_2(B,B_0,\pi)$ with
graded-simple noncommutative $B$;

\item[{\rm (VI)}] $\Mat_2(B)\oplus \Mat_2(B)$ for a noncommutative graded-simple
associative unital $B$ with $P(x)y=xyx$;

\item[{\rm (VII)}] $\AC(q, S, F_0)\oplus \AC(q, S, F_0)$ for
$\AC(q, S,F_0)$ as in {\rm (IV)}.
\end{enumerate}

\noindent Conversely, the Jordan triple systems in {\rm (I)--(VII)}
are graded-simple-trian\-gu\-lated.\end{corollary}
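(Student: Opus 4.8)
The plan is to derive this classification directly from Theorem~\ref{maintheorem} by translating its two dichotomous outcomes into the explicit list, using the structure theorems already established for each model. The theorem tells us that a graded-simple-triangulated $J$ satisfying (a) or (b) falls into exactly one of two cases: either $C$ is noncommutative with $M=Cu$ and $J\cong_\La \rmH_2(C,C_0,\pi,{}^-)$, or $C$ is commutative and $J\cong_\La \AC(q,M,S,C,{}^-,C_0)$. In either case the relevant coordinate structure $(C,\pi,{}^-)$ is graded-simple. So the first step is simply to invoke Theorem~\ref{maintheorem} to land in one of these two coordinatized models.

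Next I would feed each case into the corresponding graded-simplicity criterion. In the hermitian case, $J\cong_\La \rmH_2(C,C_0,\pi,{}^-)$ is a graded-simple-triangulated hermitian matrix system, so Proposition~\ref{hermconclusion} (the hermitian graded-simplicity criterion) applies verbatim, producing its five alternatives (I)--(V). Since we are in the noncommutative branch ($\pi\ne\Id$, $C$ noncommutative), the final sentence of Proposition~\ref{hermconclusion} guarantees the relevant matrix systems are \emph{diagonal} precisely because $B$ is noncommutative; this matches the ``diagonal'' and ``noncommutative'' qualifiers in items (I), (II), (III), (V), (VI) of the corollary. In the Clifford case, $J\cong_\La \AC(q,M,S,C,{}^-,C_0)$ with $C$ commutative and $(C,{}^-)$ graded-simple, and since $u$ is $C$-faithful (hence $C=D$ acts faithfully on $M$), Proposition~\ref{cliffordsimpl}(iv) applies, giving the two alternatives: a Clifford system over a graded-field with Clifford-ample subspace, or its polarized double.

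The remaining bookkeeping is to match up labels: Proposition~\ref{hermconclusion}(I) is a graded-simple $A$ with $\pi\ne\Id$, yielding the corollary's (I); its (II) and (III) ($\Mat_2(B)$ with automorphism resp.\ involution) yield (II) and (III); its (IV) and (V) are the polarized cases yielding (V) and (VI). Proposition~\ref{cliffordsimpl}(iv)(c)(I) yields (IV) and (c)(II) yields the polarized (VII). The non-polarized versus polarized split in the corollary is exactly the split between the non-polarized and polarized outputs of the two criteria, so no extra argument is needed. For the converse, each listed system is by construction a hermitian matrix system or ample Clifford system with graded-simple coordinates, so Corollary~\ref{hermitiansimpl}(i) resp.\ Proposition~\ref{cliffordsimpl}(iv) shows it is graded-simple, and each carries its standard triangle, hence is graded-simple-triangulated.

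I expect the only genuinely delicate point to be verifying that the lists produced by the two criteria are mutually exclusive and jointly exhaustive, i.e.\ that the dichotomy $C$ commutative versus noncommutative in Theorem~\ref{maintheorem} cleanly separates the hermitian outputs from the Clifford outputs with no overlap or omission. This is essentially a matter of tracking the commutativity of the coordinate algebra through each case: the hermitian items all have noncommutative $B$ (or noncommutative $A$ with $\pi\ne\Id$), while the Clifford items are commutative coordinatizations, so there is no clash. Since the substantive work is already carried by Theorem~\ref{maintheorem}, Proposition~\ref{hermconclusion}, and Proposition~\ref{cliffordsimpl}(iv), the proof is a short assembly, and indeed the statement cites \cite[4.4]{MNSimple} for the ungraded template; the argument here is the graded transcription of that assembly.
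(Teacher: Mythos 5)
Your proposal matches the paper's proof, which is exactly the one-line assembly you describe: apply Theorem~\ref{maintheorem} to land in the hermitian or Clifford model with graded-simple coordinates, then enumerate via Proposition~\ref{hermconclusion} and Proposition~\ref{cliffordsimpl}(iv), with the label-matching and converse as you state. No issues.
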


\begin{corollary}\label{divgradclassi} For a graded-triangulated Jordan triple system $J$
the following are equivalent: \sm

\begin{enumerate}
 \item[\rm (i)] $J$ is graded-simple and every homogeneous $0\neq m\in M$
     is invertible,

 \item[\rm (ii)] $J$ is division-triangulated,

 \item[\rm (iii)] $J$ is graded isomorphic to one of the following:
\begin{enumerate}
\item[{\rm (I)}] diagonal hermitian matrix system $\rmH_2(A,A_0, \pi , ^-)$
for a noncommutative division-graded $A$.
\item[{\rm (II)}] $\AC(q, S, F_0)$ for a graded-anisotropic $q$
over a graded-field $F$ with Clifford-ample subspace $F_0$.
\end{enumerate}
\end{enumerate}
\end{corollary}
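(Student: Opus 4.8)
The plan is to prove the cycle of implications (ii) $\Rightarrow$ (i) $\Rightarrow$ (iii) $\Rightarrow$ (ii), exploiting the fact that the two division-refinement results, Lem.~\ref{hermtor} and Cor.~\ref{clifforddivcor}, are stated as equivalences and hence do double duty: each simultaneously identifies the coordinate structure and certifies division-triangulation. The implication (ii) $\Rightarrow$ (i) requires no work: by the very definition of \emph{division-triangulated}, every homogeneous $0\neq m\in M$ is invertible, and, as recorded there, any division-triangulated Jordan triple system is in particular graded-simple; this is exactly (i).

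The heart of the matter is (i) $\Rightarrow$ (iii). First I would verify that hypothesis (i) entails hypothesis (a) of Th.~\ref{maintheorem}: since $M$ is graded, any nonzero $m\in M$ is the sum of its nonzero homogeneous components, each of which is invertible by (i), so $m$ is a linear combination of invertible homogeneous elements. Thus Th.~\ref{maintheorem} applies and yields that $J$ is graded isomorphic either to a diagonal hermitian matrix system $\rmH_2(C,C_0,\pi,{}^-)$ with $C$ noncommutative, or to an ample Clifford system $\AC(q,M,S,C,{}^-,C_0)$ with $C$ commutative; in both cases $(C,\pi,{}^-)$ is graded-simple, and since $C\subseteq \End_k(M)$ the coordinate algebra acts faithfully on $M$.

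Next I would refine each case, using that the coordinatization isomorphism of Th.~\ref{maintheorem} preserves the triangle and hence carries the off-diagonal Peirce space $M$ of $J$ onto the off-diagonal part of the model, so that hypothesis (i) transfers verbatim. In the hermitian case, Lem.~\ref{hermtor}(a) converts ``every homogeneous $0\neq m\in M$ is invertible'' into ``$C$ is division-graded'' (and simultaneously into ``$\rmH$ is division-triangulated''); together with $C$ noncommutative this is precisely case (I) of (iii). In the Clifford case, Cor.~\ref{clifforddivcor}(a)---whose faithfulness hypothesis is met as just noted---converts the same invertibility into ``$C$ is a graded-field and $q$ is graded-anisotropic'' (and into ``$J$ is division-triangulated''), which is case (II) with $F=C$ and $F_0=C_0$. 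The polarized possibilities (V)--(VII) of Cor.~\ref{classification} never appear here, precisely because Th.~\ref{maintheorem} is phrased in terms of the coordinate triple $(C,\pi,{}^-)$ rather than a possibly decomposable coordinate algebra. Finally, (iii) $\Rightarrow$ (ii) is just the converse direction of these same two equivalences: a diagonal $\rmH_2(A,A_0,\pi,{}^-)$ over a division-graded $A$ is division-triangulated by Lem.~\ref{hermtor}(a), and an $\AC(q,S,F_0)$ with $q$ graded-anisotropic over a graded-field is division-triangulated by Cor.~\ref{clifforddivcor}(a); this closes the cycle.

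I expect the only delicate point to be bookkeeping rather than mathematics: one must make sure that $M$ and the invertibility condition are transported correctly under the isomorphism of Th.~\ref{maintheorem}, so that the hypotheses of Lem.~\ref{hermtor} and Cor.~\ref{clifforddivcor} are literally the translated form of (i). Since Th.~\ref{maintheorem} already asserts that its isomorphisms preserve the triangle, and hence the Peirce decomposition, this transport is automatic and no genuinely new computation is required; the corollary is an assembly of the prior results.
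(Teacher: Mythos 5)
Your proposal is correct and follows essentially the same route as the paper: apply Th.~\ref{maintheorem} under hypothesis (i) (noting that (i) implies its condition (a)), then refine the two resulting cases via Lem.~\ref{hermtor}(a) and Cor.~\ref{clifforddivcor}(a), whose equivalence form also supplies the converse implications. The only difference is cosmetic bookkeeping in how the cycle of implications is organized.
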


\begin{proof} If (i) holds we can apply Th.~\ref{maintheorem}:
$J$ is graded isomorphic to a hermitian matrix system or to an ample
Clifford system, and $C$ is $u$-faithful. The assumption on $M$
together with Cor.~\ref{hermtor} and Cor.~\ref{clifforddivcor} then
show that $J$ is graded isomorphic to one of the two cases in (iii)
and that $J$ is division-triangulated. The implication (ii)
$\Rightarrow$ (i) is trivial, and (iii) $\Rightarrow$ (i) follows
from the quoted corollaries.
\end{proof}

\begin{corollary} \label{jtstorclassi} A graded Jordan triple system $J$ over a field $k$ is a
triangulated Jordan triple torus iff $J$ is graded isomorphic to
\begin{enumerate}
 \item[{\rm (I)}] a diagonal hermitian matrix system $\rmH_2(A, A_0, \pi, {}^-)$
for a noncommutative torus $A$, or to

 \item[{\rm (II)}] an ample Clifford system $\AC(D,q, M)$ with $D, M$
as described in {\rm Cor.~{\rm \ref{clifforddivcor}(b)}}.
\end{enumerate}
\end{corollary}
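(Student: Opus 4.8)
The plan is to read off this classification directly from the division-triangulated case Cor.~\ref{divgradclassi}, and then to sharpen each of its two coordinate models by invoking the ``torus'' criteria already recorded for those models, namely Lem.~\ref{hermtor}(b) for hermitian matrix systems and Cor.~\ref{clifforddivcor}(b) for ample Clifford systems. Before starting I would replace $\La$ by the subgroup generated by $\supp_\La J$, so that $J$ becomes $\La$-triangulated and the hypotheses of Lem.~\ref{hermtor}(b) apply without change; since the notion of a (noncommutative) torus is insensitive to this replacement, nothing in the statement is lost.

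For the forward implication I would argue as follows. A triangulated Jordan triple torus is by definition division-triangulated, so Cor.~\ref{divgradclassi} shows that $J$ is graded isomorphic either to a diagonal hermitian matrix system $\rmH_2(A,A_0,\pi,{}^-)$ with $A$ noncommutative division-graded, or to an ample Clifford system $\AC(q,S,F_0)$ with $q$ graded-anisotropic over a graded-field $F$. A graded isomorphism preserves every homogeneous dimension, so the target system is again a triangulated Jordan triple torus. In the hermitian case Lem.~\ref{hermtor}(b) then forces $A$ to be a torus, and it is noncommutative by the case hypothesis, which is model~(I). In the Clifford case one first checks that $F=D$ acts faithfully on $M$: the annihilator of $M$ is a graded ideal of the graded-simple algebra $F$ and is proper because $1\cdot M=M\neq 0$, hence it is zero. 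Thus Cor.~\ref{clifforddivcor}(b) applies and delivers precisely the data $(D,q,M)$ described there, which is model~(II).

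The converse I would obtain by reading the same two results backwards: if $A$ is a noncommutative torus then $\rmH_2(A,A_0,\pi,{}^-)$ is a triangulated Jordan triple torus by Lem.~\ref{hermtor}(b), and if $(D,M)$ satisfy conditions (I) and (II) of Cor.~\ref{clifforddivcor}(b) then $\AC(D,q,M)$ is a triangulated Jordan triple torus by the converse half of that corollary. In either case the transported structure on $J$ is again such a torus.

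I do not expect a genuine obstacle here, since the corollary is essentially a bookkeeping combination of Cor.~\ref{divgradclassi}, Lem.~\ref{hermtor}(b) and Cor.~\ref{clifforddivcor}(b). The two points that need a word of care are the initial reduction to the $\La$-triangulated situation, needed to match the exact phrasing of Lem.~\ref{hermtor}(b), and the verification of $D$-faithfulness in the Clifford case so that Cor.~\ref{clifforddivcor}(b) is legitimately applicable; both are routine.
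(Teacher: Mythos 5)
Your proposal is correct and follows exactly the route the paper takes: its proof of this corollary is the one-line observation that it follows from Cor.~\ref{divgradclassi} together with the torus descriptions in Lem.~\ref{hermtor} and Cor.~\ref{clifforddivcor}. The two points you flag for care (reducing to the $\La$-triangulated case and checking $D$-faithfulness, the latter also guaranteed by Th.~\ref{maintheorem}) are sensible but routine, and do not change the argument.
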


\begin{proof} This follows from Cor.~\ref{divgradclassi} and the description
of tori in Lemma~\ref{hermtor} and Cor.~\ref{clifforddivcor}.
\end{proof}

\begin{corollary} \label{jtszncla}
$J$ is a $\ZZ^n$-triangulated Jordan triple torus iff $J$ is graded
isomorphic to
\begin{enumerate}
 \item[{\rm (I)}] a diagonal hermitian matrix system $\rmH_2(A, A_0, \pi, {}^-)$ where
$A$ is a quantum $\ZZ^n$-torus, see {\rm Ex.~\ref{hermtorqun}} and
$\pi=\pi_{\rm rev}$ is the reversal involution,  or to

\item[{\rm (II)}] an ample Clifford system as described in
{\rm Ex.~\ref{examaczn}}. \end{enumerate}
\end{corollary}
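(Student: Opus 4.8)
The plan is to obtain Cor.~\ref{jtszncla} as the specialization of the general torus classification Cor.~\ref{jtstorclassi} to the grading group $\La=\ZZ^n$, translating its two coordinate descriptions via Ex.~\ref{hermtorqun} and Ex.~\ref{examaczn}. So let $J$ be a $\ZZ^n$-triangulated Jordan triple torus. Since $J$ is in particular a triangulated Jordan triple torus, Cor.~\ref{jtstorclassi} shows that $J$ is graded isomorphic either to a diagonal hermitian matrix system $\rmH_2(A,A_0,\pi,{}^-)$ for a noncommutative torus $A$, or to an ample Clifford system $\AC(D,q,M)$ with $D$ and $M$ as in Cor.~\ref{clifforddivcor}(b). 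As a graded isomorphism preserves supports, the condition that $\supp_\La J$ generate $\ZZ^n$ passes to the coordinate data; using that the support of a torus is a subgroup, this gives $\supp_\La A=\ZZ^n$ in the first case and $\La=\ZZ[\supp_\La M]=\ZZ^n$ in the second.

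In the hermitian case I would invoke Ex.~\ref{hermtorqun}, which says that $\rmH_2(A,A_0,\pi,{}^-)$ is a $\ZZ^n$-triangulated Jordan triple torus exactly when $A$ is a quantum $\ZZ^n$-torus; hence $A$ is a quantum $\ZZ^n$-torus. It then remains to normalize the involution. Since the matrix system is diagonal, $\pi$ is by Def.~\ref{examplehermi} the reversal involution with respect to $A_0$. The existence of a graded involution on a quantum torus forces all structure constants $q_{ij}=\pm 1$, and after replacing the coordinate system by a graded-isomorphic one I would choose homogeneous generators $t_1,\dots,t_n\in A_0$ of degrees $\e_1,\dots,\e_n$; with this choice the reversal involution with respect to $A_0$ is precisely the $\pi_{\rm rev}$ of (\ref{eq:quantor3}). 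This produces case (I).

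In the Clifford case I would take $D$ to be the canonical coordinate algebra, namely the subalgebra generated by $D_0$ furnished by Th.~\ref{clifford} and Th.~\ref{maintheorem}, so that the standing hypothesis of Ex.~\ref{examaczn} that $D_0$ generate $D$ is satisfied. The support computation carried out there then applies verbatim: from (\ref{eq:supplem1}) one gets $2\ZZ^n\subset\Ga=\supp_\La D\subset\ZZ^n$ with $\Ga$ free of rank $n$, so that $D$ is a Laurent polynomial ring in $n$ variables, while $\supp_\La M/\Ga$ is finite, so that $M$ is a free $D$-module of finite rank. Thus $J$ is an ample Clifford system of the form described in Ex.~\ref{examaczn}, giving case (II). The converse implications are immediate: the systems in (I) and (II) are $\ZZ^n$-triangulated Jordan triple tori by Ex.~\ref{hermtorqun} and by Ex.~\ref{examaczn} together with Cor.~\ref{clifforddivcor}(b), respectively.

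The step I expect to be the main obstacle is the involution normalization in the hermitian case: moving from the intrinsic reversal involution determined by $A_0$ to the explicit $\pi_{\rm rev}$ attached to a generating set $\{t_i^{\pm 1}\}$. This rests on the facts that a graded involution of a quantum $\ZZ^n$-torus forces $q_{ij}=\pm 1$ and is graded-conjugate to $\pi_{\rm rev}$, and that the generators $t_i$ can be taken inside the $\pi$-symmetric ample subspace $A_0$. The Clifford case, by contrast, is a direct application of the support bookkeeping already performed in Ex.~\ref{examaczn}.
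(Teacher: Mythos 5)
Your proposal matches the paper's own proof, which simply derives the corollary from Cor.~\ref{jtstorclassi} together with Ex.~\ref{hermtorqun} and Ex.~\ref{examaczn}, adding only the observation that $D_0$ generates $D$ by construction so that Ex.~\ref{examaczn} applies in the Clifford case. You are in fact more careful than the paper on the one delicate point, the identification of the diagonal involution with the $\pi_{\rm rev}$ of (\ref{eq:quantor3}) relative to a choice of generators $t_i\in A_0$, which the paper leaves entirely to the quoted references.
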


\begin{proof} All statements follow from the quoted references. Note
that by construction $D_0$ generates $D$ in the Clifford case, so
that we are indeed in the setting of Ex.~\ref{examaczn}.
\end{proof}


\section{Graded-simple-triangulated Jordan algebras and Jordan pairs}
\label{sec:triangjp}

In this section we specialize our results on graded-triangulated
Jordan triple systems to Jordan algebras and Jordan pairs: We
classify graded-simple-triangulated Jordan algebras
(Th.~\ref{classalgth}) and Jordan pairs (Th.~\ref{classpairs}), and
we deduce from these theorems the classifications of
division-trian\-gu\-lated Jordan algebras and pairs
(Cor.~\ref{divalg}, Cor.~\ref{divpair}). As an example, we describe
the classification of $\ZZ^n$-triangulated Jordan algebra tori
(Cor.~\ref{jazncla}) and Jordan pair tori (Cor.~\ref{jordantori}).
\sm

In this paper all Jordan algebras are assumed to be unital, with
unit element denoted $1$ or $1_J$ if we need to be more precise, and
with Jordan product written as $U_xy$. A homomorphism of Jordan
algebras is a $k$-linear map $f : J \to J'$ satisfying $f(U_xy) =
U_{f(x)}f(y)$ and $f(1_J) = 1_{J'}$.

In order to apply our results we will view Jordan algebras as Jordan
triple systems with identity elements. Thus, to a Jordan algebra $J$
we associate the Jordan triple system $T(J)$ defined on the
$k$-module $J$ with Jordan triple product $P_xy=U_xy$. The element
$1_J\in J$ satisfies $P(1_J)={\rm Id}$. Conversely, every Jordan
triple system $T$ containing an element $1\in T$ with $P(1) = {\rm
Id}$ is a Jordan algebra with unit element $1$ and multiplication
$U_xy = P_xy$.

For many concepts there is no or not a big difference between $J$
and $T(J)$. For example, a Jordan algebra $J$ is  {\it graded by
$\La$\/} if and only if $T(J)$ is  graded by $\La$, in which case
$1_J \in J^0$. In this case, a graded ideal of $J$ is the same as a
graded   ideal of $T(J)$, and we will call $J$ {\it graded-simple}
if $T(J)$ is so. Moreover, if $e\in J$ is an idempotent, i.e.,
$e^2=e$, then $e$ is a tripotent of $T(J)$ and the Peirce spaces of
$J$ and $T(J)$ with respect to $e$ coincide, i.e.,
$T\big(J_i(e)\big)=T(J)_i(e)$, $i=0,1,2$. In particular, the Peirce
spaces $J_i(e)$ are graded  if $e\in J^0$. We thus get the following
corollary from Th.~\ref{grsiminh}.

\begin{corollary} \label{PeircsimpJA} If $J$ is a graded-simple Jordan algebra
with an idempotent  $0\ne e\in J^0$, then the Peirce space $J_2(e)$
is a graded-simple Jordan algebra, and if $J_0(e)\neq 0$ then
$J_0(e)$ is graded-simple too.\end{corollary}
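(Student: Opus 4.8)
The plan is to transfer the statement to the associated Jordan triple system $T(J)$ and invoke Theorem~\ref{grsiminh}. By the very definition of graded-simplicity for Jordan algebras, $J$ being graded-simple means that $T(J)$ is a graded-simple Jordan triple system; and since $1_J \in J^0$, the idempotent $e$ lies in $J^0 = T(J)^0$ and is a nonzero tripotent of $T(J)$. Thus Theorem~\ref{grsiminh} applies directly to $T(J)$ and $e$.

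Next I would identify the Peirce spaces correctly as unital Jordan algebras. The Peirce-$2$ space $J_2(e)$ is a unital Jordan algebra with unit $e$, while the complementary idempotent $f := 1_J - e$ also lies in $J^0$ and satisfies $J_0(e) = J_2(f)$, so $J_0(e)$ is a unital Jordan algebra with unit $f$. Because $e, f \in J^0$, both Peirce spaces are graded, and by the identification recorded above we have $T\big(J_2(e)\big) = T(J)_2(e)$ and $T\big(J_0(e)\big) = T(J)_0(e)$ as graded Jordan triple systems.

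Finally, applying Theorem~\ref{grsiminh} to $T(J)$ yields that $T(J)_2(e)$ is graded-simple and, when $J_0(e) = T(J)_0(e) \neq 0$, that $T(J)_0(e)$ is graded-simple as well. Reading these two facts through the equalities of the previous paragraph and the definition of graded-simplicity for Jordan algebras gives precisely that $J_2(e)$ and $J_0(e)$ are graded-simple Jordan algebras. I do not expect a genuine obstacle here: the entire content is Theorem~\ref{grsiminh}, and the only point requiring care is the routine Peirce-decomposition bookkeeping ensuring that $J_2(e)$ and $J_0(e)$ are unital (with units $e$ and $1_J - e$), so that the Jordan-algebra notion of graded-simplicity is available and matches the triple-system notion via $T$.
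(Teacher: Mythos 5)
Your proposal is correct and follows exactly the route the paper takes: the corollary is stated there as an immediate consequence of Theorem~\ref{grsiminh} via the identifications $T\big(J_i(e)\big)=T(J)_i(e)$ and the definition of graded-simplicity for Jordan algebras through $T(J)$. Your additional remark that $J_2(e)$ and $J_0(e)=J_2(1_J-e)$ are unital with units $e$ and $1_J-e$ is a harmless (and correct) piece of bookkeeping that the paper leaves implicit.
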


A  graded   Jordan algebra $J$ is called {\it graded-triangulated}
by $(u;e_1,e_2)$ if $e_i=e_i^2\in J^0$, $i=1,2$, are supplementary
orthogonal idempotents and $u\in J_1(e_1)^0 \cap J_1(e_2)^0$ with
$u^2=1$ and $u^3=u$. It is called {\it $\La$-triangulated\/} if it
is graded-triangulated and $\supp_\La J$ generates $\La$ as a group.
Note that any $x\in J$ with $x^2=1$ satisfies $2x^3= x^2 \circ x =
2x$ and $U_{x^3-x} = U_x U_{x^2-1} = 0$ by \cite[(1.5.4) and
(3.3.4)]{ark}, whence $x^2=1$ implies $x^3=x$ in case ${1\over 2}\in
k$ or $x$ is homogeneous and $J$ is graded-nondegenerate. Of course,
we also have $x^2=1 \Rightarrow x^3=x$ if  $J$ is special. In any
case, with our definition of a triangle in a Jordan algebra, $J$ is
graded-triangulated by $(u;e_1,e_2)$ iff $T(J)$ is
graded-triangulated by $(u;e_1,e_2)$.  Also, $J$ is
$\La$-triangulated  iff $T(J)$ is so.

This close relation to graded-triangulated Jordan triple systems
also indicates how to get examples of graded-triangulated Jordan
algebras: We take a Jordan triple system which is
graded-triangulated by $(u; e_1, e_2)$ and require $P(e) = \Id$ for
$e=e_1 + e_2$. Doing this for our two basic examples, yields the
following examples of graded-triangulated Jordan algebras.

\begin{definition}\label{jatriadef} (A) {\it Hermitian matrix algebras}:
This is the  graded  Jordan triple system $\rmH_2(A, A_0,
\allowbreak \pi, ^-)$ of Def.~\ref{examplehermi} with automorphism
$^-=\Id$, which we will write as $\rmH_2(A,A_0,\pi)$. Note that this
is a Jordan algebra with product $U(x)y=P(x)y=xyx$ and identity
element $1_J=E_{11} + E_{22}$. If for example $A=B \boxplus B^{\rm
op}$ and $\pi$ is the exchange involution, then $\rmH_2(A, A_0, \pi)
\cong_\La \Mat_2(B)$ where $\Mat_2(B)$ is the Jordan algebra with
product $U_xy=xyx$.\sm

 (B) {\it Quadratic form Jordan algebras}: This is the  graded  ample
Clifford system $\AC(q,M,S,D, ^-, D_0)$ of Def.~\ref{cli} with
automorphism $^-=\Id$ and $S|_{M}=\Id$. Since then $P(e)=\Id$ we get
indeed a graded-triangulated Jordan algebra denoted $\AC\alg(q, M,
D, D_0)$ or just $\AC\alg(q,D, \allowbreak D_0)$ if $M$ is
unimportant. Note that this Jordan algebra is defined on $D_0e_1
\oplus M \oplus D_0e_2$ and has product $U_xy = q(x, \tilde y)x-
q(x)\tilde y$ where $q(d_1 e_1 \oplus m \oplus d_2 e_2) = d_1d_2 -
q(m)$ and $(d_1 e_1 \oplus m \oplus d_2e_2)\wt{} = d_2 e_1 \oplus -m
\oplus d_1 e_1$. (If ${1 \over 2} \in k$ it is therefore a reduced
spin factor in the sense of \cite[II, \S3.4]{taste}.)
\end{definition}

\begin{theorem} \label{classalgth} A graded-simple-triangulated Jordan algebra
 satisfying \begin{enumerate}
\item[{\rm (a)}] every nonzero $m\in M$ is a linear combination of invertible
homogeneous e\-le\-ments, or
\item[{\rm (b)}] the grading group $\La$ is torsion-free, \end{enumerate} is
graded isomorphic to one of the following Jordan algebras:
\begin{enumerate}
\item[{\rm (I)}] diagonal $\rmH_2(A,A_0, \pi )$ for a
graded-simple noncommutative $A$;
\item[{\rm (II)}] $\Mat_2(B)$ for a noncommutative graded-simple
associative  unital $B$;
\item[{\rm (III)}] $\AC\alg(q,F, F_0)$ for a graded-nondegenerate
$q $ over a graded-field $F$ with Clifford-ample subspace $F_0$.
\end{enumerate}

\noindent Conversely, all Jordan algebras in {\rm (I)--(III)} are
graded-simple-triangu\-la\-ted.
\end{theorem}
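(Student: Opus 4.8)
The plan is to deduce the statement from the triple-system classification Cor.~\ref{classification} by transporting it across the functor $J \mapsto T(J)$. As recorded just above, $J$ is graded-simple-triangulated by $(u; e_1, e_2)$ exactly when $T(J)$ is, the Peirce decompositions coincide, and hypotheses (a), (b) are conditions on the off-diagonal Peirce space $M$, hence pass unchanged to $T(J)$. So $T(J)$ is a graded-simple-triangulated Jordan triple system satisfying (a) or (b), and Cor.~\ref{classification} supplies a triangle-preserving graded isomorphism $\varphi$ from $T(J)$ onto one of the seven models (I)--(VII), with model triangle $(u'; e_1', e_2')$. The decisive extra datum is that, $J$ being unital with $1_J = e_1 + e_2 =: e$, one has $P(e) = \Id$ on $T(J)$; equivalently the automorphism $^- = P(e)$ is trivial.

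First I would push this triviality through $\varphi$. Since $\varphi$ is a homomorphism of triple systems with $\varphi(e) = e' := e_1' + e_2'$, it intertwines $P(e)$ and $P(e')$; as $P(e) = \Id$ and $\varphi$ is bijective, $P(e') = \Id$, i.e.\ the automorphism $^-$ of the target model is also the identity. Next I would discard the three polarized models (V)--(VII): in a polarized triple system $T = T^+ \oplus T^-$ the product $P(x)y = Q(x^+)y^- \oplus Q(x^-)y^+$ shows that every $P(x)$, in particular $P(e')$, maps $T^+$ into $T^-$ and $T^-$ into $T^+$; hence $P(e') = \Id$ forces $T^+ = P(e')T^+ \subseteq T^+ \cap T^- = 0$ and likewise $T^- = 0$, contradicting $P(T(J))T(J) \ne 0$. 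So only the four non-polarized models survive.

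I would then run through these four under the constraint $^- = \Id$. In model (I) triviality of $^-$ merely erases the automorphism, leaving the diagonal $\rmH_2(A, A_0, \pi)$ of Def.~\ref{jatriadef}(A) with $A$ graded-simple noncommutative, which is case (I). In model (II) the bar becomes trivial, so $P(x)y = xyx$ and we land in $\Mat_2(B)$, case (II). Model (III) is ruled out: there $^-$ is the entrywise involution $\iota$ on the noncommutative $B$, and $^- = \Id$ would force $\iota = \Id$, hence $B$ commutative, a contradiction. In model (IV), reading off the explicit form of $^-$ on a Clifford system in Def.~\ref{cli} shows $^- = \Id$ forces both the involution on $F$ and $S|_M$ to be the identity; by Def.~\ref{jatriadef}(B) this is precisely $\AC\alg(q, F, F_0)$, case (III). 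For the converse I would simply note that each of (I)--(III) is a Jordan algebra by Def.~\ref{jatriadef} and is graded-simple-triangulated by the converse assertion of Cor.~\ref{classification}, being the models (I), (II), (IV) there specialized to $^- = \Id$ (and $S = \Id$).

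Essentially all the difficulty is already absorbed into Cor.~\ref{classification}; what remains is bookkeeping. The step I would treat most carefully is the case analysis showing that $^- = \Id$, together with the polarization obstruction, eliminates exactly models (III) and (V)--(VII) while collapsing (I), (II), (IV) onto the three algebra models --- in particular verifying that triviality of $^-$ correctly encodes, in each model, the defining conditions ($S = \Id$, trivial coordinate involution) of the Jordan-algebra versions in Def.~\ref{jatriadef}.
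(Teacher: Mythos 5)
Your proof is correct, and it reaches the three target models by a route that is the same in spirit but different in execution from the paper's. The paper does not pass through Cor.~\ref{classification} at all: it applies Th.~\ref{maintheorem} directly to $T(J)$, where the coordinate automorphism $^-$ of $C$ is \emph{by construction} $c\mapsto P(e)cP(e)|_M$ and the map $S$ is $P(e)|_M$; since $e=1_J$ gives $P(e)=\Id$, the bar is trivially the identity from the outset, so the twisted and polarized models never arise and one lands immediately in $\rmH_2(C,C_0,\pi)$ or $\AC\alg(q,C,C_0)$, after which Prop.~\ref{relsimple} splits the hermitian case into (I) and (II). You instead invoke the downstream seven-case Cor.~\ref{classification} and then transport $P(e)=\Id$ through the classifying isomorphism to kill cases (III) and (V)--(VII) a posteriori; your observations that a polarized triple system cannot be unital and that $P(e')=\Id$ in model (III) forces $\iota=\Id$ and hence commutativity of $B$ are both sound, as is the reduction of (I), (II), (IV) to the three algebra models. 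The price of your route is the extra case elimination plus two small points you should make explicit: (1) Cor.~\ref{classification} as stated does not assert triangle preservation, so you must trace this back through Th.~\ref{maintheorem} and Props.~\ref{hermconclusion}, \ref{cliffordsimpl} to justify $\varphi(e)=e'$ (which is also what makes $\varphi$ an isomorphism of Jordan algebras, not merely of triple systems); and (2) in model (IV), $P(e')=\Id$ directly gives $S|_M=\Id$ and $\bar{}=\Id$ only on $F_0$, and one extends to all of $F$ via $S(fm)=\bar f S(m)$ and the $F$-faithfulness of $u$ recorded in Prop.~\ref{cliffordsimpl}(iv). Neither point is a genuine obstacle.
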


\begin{proof} Let $J$ be a graded-simple-triangulated Jordan algebra
 that satisfies (a) or (b). Then $T(J)$ is a graded-simple-triangulated Jordan
 triple system with $e=1_J$ satisfying (a) or (b) of Th.~\ref{maintheorem}, hence graded isomorphic as Jordan triple system
to $\rmH_2(C,C_0, \pi)$, where $(C,\pi)$ is graded-simple, or to
$\AC\alg(q, C, C_0)$, where $C$ is division-graded. But because the
graded isomorphisms appearing in Th.~\ref{maintheorem} preserve the
triangles, they are in fact isomorphisms of Jordan algebras.
Therefore $J$ is graded isomorphic to $\rmH_2(C,C_0, \pi)$, where
$(C,\pi)$ is graded-simple, or to $\AC\alg(q, C, C_0)$, where $C$ is
a graded-field. In the second case $J$ is of type (III) of the
statement for $F_0=C_0$ and $F=C$. On the other hand, it follows
from Prop.~\ref{relsimple} that $(C,\pi)$ is graded-simple iff
either $C$ is graded-simple or $C\cong_{\La} B\boxplus B$ for a
graded-simple associative $B$ and $\pi$ is the exchange involution.
Hence $\rmH_2(C,C_0, \pi)$, where $(C,\pi)$ is graded-simple, is as
in (I) or (II) of the statement. The converse follows from
Cor.~\ref{classification}.\end{proof}

\begin{definition}\label{jordalgtor} As in the Jordan triple system case, a
graded Jordan algebra $J$ is called {\it division-graded} if every
nonzero homogeneous element is invertible in $J$.

We say that $J$ is {\it division-triangulated} if it is
graded-triangulated, the Jordan algebras $J_i$, $i=1,2$, are
division-graded and every homogeneous $0\neq m\in M$ is invertible
in $J$. It is called {\it division-$\La$-triangulated\/} if it is
division-triangulated as well as $\La$-graded.

A division-($\La$)-triangulated Jordan algebra $J$ is called a {\it
($\La$)-triangulated Jordan algebra torus\/} if $J$ is defined over
a field $k$ and $\dim_k J_i^\la\le 1$ and $\dim_k M^\la\le 1$.

Thus, $J$ is a division-($\La$)-triangulated Jordan algebra iff
$T(J)$ is a division-($\La$)-triangulated Jordan triple system. We
therefore get the Jordan algebra versions of the
Corollaries~\ref{divgradclassi}--\ref{jtszncla}. We formulate the
first and last of them, and leave the translation of the second,
Cor.~\ref{jtstorclassi}, to the reader.
\end{definition}

\begin{corollary} \label{divalg} For a graded-triangulated Jordan
algebra $J$ the following are equivalent:

\begin{enumerate}
 \item[\rm (i)] $J$ is graded-simple and every homogeneous $0\neq m\in M$
     is invertible,

 \item[\rm (ii)] $J$ is division-triangulated,

 \item[\rm (iii)] $J$ is graded isomorphic to one of the following:
\begin{enumerate}
 \item[{\rm (I)}] a diagonal hermitian matrix algebra $\rmH_2(A,A_0, \pi)$
 for a noncommutative division-graded $A$;

 \item[{\rm (II)}] a quadratic form Jordan algebra $\AC\alg(q, F, F_0)$
 for a graded-anisotropic $q$ over a graded-field $F$ with
    Clifford-ample subspace $F_0$.
\end{enumerate}
\end{enumerate}
\end{corollary}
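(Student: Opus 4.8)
The plan is to reduce everything to the Jordan triple system classification in Cor.~\ref{divgradclassi} via the passage from a Jordan algebra $J$ to its associated Jordan triple system $T(J)$. Recall from the discussion preceding this corollary that $J$ is graded-triangulated by $(u;e_1,e_2)$ if and only if $T(J)$ is, that the Peirce spaces --- in particular the off-diagonal space $M$ --- coincide for $J$ and $T(J)$, and that $J$ is graded-simple precisely when $T(J)$ is. Moreover, invertibility of a homogeneous $m \in M$ is the same notion in $J$ and in $T(J)$, and by Def.~\ref{jordalgtor}, $J$ is division-triangulated if and only if $T(J)$ is.

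These observations immediately yield the equivalence of (i) and (ii): both conditions on $J$ translate verbatim into the corresponding conditions on $T(J)$, for which (i)$\Leftrightarrow$(ii) is exactly Cor.~\ref{divgradclassi}.

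For the remaining equivalence, I would apply Cor.~\ref{divgradclassi} to $T(J)$: assuming (i), the triple system $T(J)$ is graded isomorphic to a diagonal hermitian matrix system $\rmH_2(A,A_0,\pi,{}^-)$ with $A$ noncommutative division-graded, or to an ample Clifford system $\AC(q,S,F_0)$ with $q$ graded-anisotropic over a graded-field $F$. The key point, exactly as in the proof of Th.~\ref{classalgth}, is that the isomorphisms furnished by Th.~\ref{maintheorem} (and hence by Cor.~\ref{divgradclassi}) preserve the triangle $(u;e_1,e_2)$; since $e = e_1+e_2 = 1_J$ and $P(e) = \Id$, these triangle-preserving graded isomorphisms are in fact isomorphisms of Jordan algebras. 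It then remains to specialize the coordinate data: because $P(e)=\Id$, the automorphism ${}^- = P(e)$ of $T(J)$ is the identity, so the hermitian matrix system collapses to a hermitian matrix algebra $\rmH_2(A,A_0,\pi)$ as in Def.~\ref{jatriadef}(A), giving case (I); and $S = P(e)|_M = \Id$, so the Clifford system becomes the quadratic form Jordan algebra $\AC\alg(q,F,F_0)$ of Def.~\ref{jatriadef}(B), giving case (II). The converse implication (iii)$\Rightarrow$(i) follows from the converse half of Cor.~\ref{divgradclassi} together with the same algebra/triple dictionary.

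The argument involves no genuine obstacle, since the hard work is already carried out in Cor.~\ref{divgradclassi}; the only points demanding care are checking that the triple-system isomorphisms respect the unit $1_J$ and are therefore isomorphisms of Jordan algebras --- which is guaranteed by triangle preservation --- and verifying that the forced identities ${}^-=\Id$ and $S|_M=\Id$ correctly specialize the two coordinate models to their Jordan algebra versions.
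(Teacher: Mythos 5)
Your proposal is correct and follows essentially the same route the paper intends: the paper derives Cor.~\ref{divalg} from Cor.~\ref{divgradclassi} precisely via the dictionary $J \leftrightarrow T(J)$ set up in \S\ref{sec:triangjp} (see Def.~\ref{jordalgtor}, where the paper states the translation and leaves the details implicit), together with the observation from the proof of Th.~\ref{classalgth} that triangle-preserving graded isomorphisms respect $1_J=e_1+e_2$ and hence are Jordan algebra isomorphisms, while $P(e)=\Id$ forces ${}^-=\Id$ and $S=\Id$, collapsing the two coordinate models to their algebra versions.
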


\begin{corollary} \label{jazncla} A graded Jordan algebra $J$ over a field $k$ is a
$\ZZ^n$-triangulated Jordan algebra torus iff $J$ is graded
isomorphic to
\begin{enumerate}
 \item[{\rm (I)}] a diagonal hermitian matrix algebra $\rmH_2(A, A_0, \pi)$
where $A$ is a noncommutative quantum $\ZZ^n$-torus and $\pi_{\rm
rev}$ is the reversal involution, see {\rm Ex.~\ref{hermtorqun}}, or
to

\item[{\rm (II)}] a quadratic form Jordan algebra
$\AC\alg(q,M,D,D_0) = D_0 e_1 \oplus M \oplus De_2$, where

\begin{enumerate} \item $D=k[\Ga]$ is the group algebra of a subgroup
$\Ga \subset \ZZ^n$ with $2\ZZ^n \subset \Ga$, hence $\Ga$ is free
of rank $n$ and $D$ is isomorphic to a Laurent polynomial ring in
$n$ variables,

\item $D_0$ is a Clifford-ample subspace, hence $D_0=D$ if ${\rm
char}(k) \ne 2$,

\item $M$ is a $\ZZ^n$-graded $D$-module which is free of finite rank,
with a homogeneous basis, say $\{u_0, \ldots, u_l\}$, and the $u_i$
have degree $\de_i \in \ZZ^n$ with $\de_0=0$ and $\de_i + \Ga \ne
\de_j + \Ga $ for $i\ne i$,

\item the $D$-quadratic form $q : M \to D$ satisfies $q(u_0)=1$,
$0\ne q(u_i)\in D^{2 \de_i}$ and $q(u_i, u_j) = 0$ for $i\ne j$.
\end{enumerate}
\end{enumerate}
\end{corollary}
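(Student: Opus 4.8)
The plan is to obtain this as a specialization of the Jordan triple torus classification Cor.~\ref{jtszncla} through the standard passage between Jordan algebras and Jordan triple systems. First I would invoke Def.~\ref{jordalgtor}, by which a graded Jordan algebra $J$ over a field $k$ is a $\ZZ^n$-triangulated Jordan algebra torus precisely when its associated triple system $T(J)$ is a $\ZZ^n$-triangulated Jordan triple torus. This reduces the nontrivial (forward) implication to Cor.~\ref{jtszncla}: $T(J)$ is then graded isomorphic either to a diagonal hermitian matrix system $\rmH_2(A, A_0, \pi_{\rm rev}, {}^-)$ over a quantum $\ZZ^n$-torus $A$ (Ex.~\ref{hermtorqun}), or to an ample Clifford system of the shape recorded in Ex.~\ref{examaczn}.

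The second step is to feed the algebra structure back in. Since $J$ is unital with $e = 1_J = e_1 + e_2$, we have $P(e) = \Id$, i.e.\ the automorphism $^-$ of $J$ is trivial; and as the classifying isomorphisms preserve the triangle $(u; e_1, e_2)$, the coordinate automorphism $^-$ must likewise be trivial. In the hermitian case this collapses $\rmH_2(A, A_0, \pi_{\rm rev}, {}^-)$ to the matrix algebra $\rmH_2(A, A_0, \pi_{\rm rev})$ of case (I). In the Clifford case $^- = \Id$ on $D$ together with $S|_M = \Id$ produces the quadratic form Jordan algebra $\AC\alg(q, M, D, D_0)$ of case (II). Because $D_0$ generates $D$ by construction (Th.~\ref{maintheorem}, Th.~\ref{clifford}), we are genuinely in the situation analyzed in Ex.~\ref{examaczn}.

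It then remains to transcribe the grading data of Ex.~\ref{examaczn} into the list (a)--(d) of case (II). From that example $\Ga = \supp_\La D$ is free of rank $n$ with $2\ZZ^n \subset \Ga$, so $D$ is a commutative graded-field with free support group $\Ga$, hence equals the group algebra $D = k[\Ga]$ by Def.~\ref{asstordef}, giving (a); Clifford-ampleness of $D_0$ and the reduction $D_0 = D$ when ${\rm char}(k) \ne 2$ come from Def.~\ref{cli}, giving (b); freeness of $M$ of finite rank with a homogeneous $D$-basis $\{u_0, \ldots, u_l\}$ follows from Cor.~\ref{clifforddivcor}, which I would normalize by taking $u_0 = u$ so that $\de_0 = 0$ and $q(u_0) = q(u) = 1$ by Def.~\ref{cli}(iv), giving (c) together with the distinctness $\de_i + \Ga \ne \de_j + \Ga$ for $i \ne j$ from Cor.~\ref{clifforddivcor}(b). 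Gradedness and graded-anisotropy of $q$ then yield $0 \ne q(u_i) \in D^{2\de_i}$, which is most of (d).

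The one genuinely computational point is the orthogonality $q(u_i, u_j) = 0$ for $i \ne j$, which Cor.~\ref{clifforddivcor}(b) records only under the extra hypothesis $D = D_0$ and so must be argued in all characteristics here. I would do this by degree: $q(u_i, u_j) \in D^{\de_i + \de_j}$, so it suffices to show $\de_i + \de_j \notin \Ga$. Were $\de_i + \de_j \in \Ga$, then since $2\de_j \in 2\ZZ^n \subset \Ga$ we would get $\de_i - \de_j = (\de_i + \de_j) - 2\de_j \in \Ga$, i.e.\ $\de_i + \Ga = \de_j + \Ga$, contradicting $i \ne j$. The converse implication, that each algebra of type (I) or (II) is a $\ZZ^n$-triangulated Jordan algebra torus, then follows by reversing these identifications and quoting the converse halves of Cor.~\ref{divalg} and of the two examples. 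I expect this short degree argument to be the only real obstacle; everything else is bookkeeping over already-established triple-system statements.
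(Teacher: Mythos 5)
Your proposal is correct and follows essentially the same route the paper intends: the paper gives no separate proof of Cor.~\ref{jazncla}, deriving it (via Def.~\ref{jordalgtor}) from the equivalence $J$ is a $\ZZ^n$-triangulated Jordan algebra torus iff $T(J)$ is a $\ZZ^n$-triangulated Jordan triple torus, followed by Cor.~\ref{jtszncla}, Ex.~\ref{hermtorqun} and Ex.~\ref{examaczn}, exactly as you do. Your one supplementary step --- establishing $q(u_i,u_j)=0$ for $i\ne j$ without the hypothesis $D=D_0$ of Cor.~\ref{clifforddivcor}(b), by noting $q(u_i,u_j)\in D^{\de_i+\de_j}$ and that $\de_i+\de_j\in\Ga$ together with $2\de_j\in 2\ZZ^n\subset\Ga$ would force $\de_i+\Ga=\de_j+\Ga$ --- is a correct and welcome filling-in of a detail the paper's citation chain leaves implicit, and it is the same degree computation used in the proof of Cor.~\ref{clifforddivcor}(b).
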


\begin{remark} \label{jaznclarem} For $\La=\ZZ^n$ Cor.~\ref{jazncla} is proven in
\cite[Prop.~4.53 and Prop.~4.80]{AG} and in an equivalent form
(structurable algebras instead of Jordan algebras) in \cite[\S3,
Th.~9]{F}, assuming ${\rm char}\, k\ne 2,3$ (\cite{F}) or $k=\CC$ in
\cite{AG}.
\end{remark}

Let now $V=(V^+, V^-)$ be a Jordan pair. A  {\it grading of $V$ by
$\La$\/} is a decomposition $V^\sigma = \bigoplus_{\la \in \La}
V^\sigma[\la]$, $\sigma=\pm$, such that the associated polarized
Jordan triple system $T(V)$ is graded with homogeneous spaces
$T(V)^\la = V^+[\la] \oplus  V^-[\la]$. The properties of graded
Jordan triple systems we have considered so far in this paper make
sense for graded Jordan pairs too. For example, a graded Jordan pair
$V$ is {\it graded-nondegenerate} if every homogeneous absolute zero
divisor of $V^+$ or $V^-$ vanishes. It is immediate that $V$ is
graded-nondegenerate if and only if $T(V)$ is graded-nondegenerate.
As usual, $V$ is said to be {\it graded-simple} if
$Q(V^{\sigma})V^{-\sigma}\neq 0$ and every graded   ideal is either
trivial or equal to $V$; $V$ is {\it graded-prime} if it does not
contain nonzero graded ideals $I$ and $K$ such that
$Q(I^{\sigma})K^{-\sigma}= 0$ and {\it graded-semiprime} if
$Q(I^{\sigma})I^{-\sigma}= 0$ implies $I=0$. For properties defined
in terms of ideals we have the following lemma, whose proof is again
a straightforward adaptation of the proof in the ungraded situation.

\begin{lemma}\label{polarized} {\rm  (\cite[1.5]{NInv} for
$\La=0$)} Let $V$ be a   graded   Jordan pair.
\begin{enumerate}
\item[{\rm (i)}] If $I=(I^+,I^-)$ is a graded ideal of $V$, then $I^+\oplus
I^-$ is a graded ideal of\/ $T(V)$.
\item[{\rm (ii)}] Let $\pi^\sigma: T(V)\rightarrow V^\sigma$, $\si=\pm$,
be the canonical projection. If $J$ is a graded ideal of $T(V)$,
then
$$\underset{\widetilde {}}J:=(J\cap V^+,J\cap V^-)\quad \text{and}\quad
\tilde{J}:=\big(\pi^+(J),\pi^-(J)\big),$$ are graded  ideals of $V$
satisfying $\underset{\widetilde{}}J^+\oplus
\underset{\widetilde{}}J^-\subseteq J\subseteq \tilde{J}^+\oplus
\tilde{J}^-$ and
$$Q(\tilde{J}^\sigma)V^{-\sigma}+Q(V^{\sigma})\tilde{J}^{-\sigma}+
\{V^{\sigma}V^{-\sigma}\tilde{J}^\sigma\} \subseteq
\underset{\widetilde {}}J^{\sigma}$$
\item[{\rm (iii)}] $V$ is graded-(semi)prime or graded-simple if and
only if $T(V)$ is, respectively, graded-(semi)prime or
graded-simple.
\end{enumerate}\end{lemma}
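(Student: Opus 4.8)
The plan is to reduce every assertion to the explicit description of the polarized triple product and to run, in the graded setting, the argument of \cite[1.5]{NInv}. Writing elements of $T(V)=V^+\oplus V^-$ as $x=x^+\oplus x^-$, we have $P(x)y=Q(x^+)y^-\oplus Q(x^-)y^+$ by definition, and linearizing in the outer variable together with $\{T^\sigma,T^\sigma,T^{-\sigma}\}=0$ gives $\{x,y,z\}=\{x^+,y^-,z^+\}\oplus\{x^-,y^+,z^-\}$. These two formulas, read forwards and backwards, convert the defining absorption relations of a Jordan pair ideal into those of a triple ideal and conversely, and the grading never obstructs this since $T(V)^\la=V^+[\la]\oplus V^-[\la]$ makes each projection $\pi^\sigma$ homogeneous of degree $0$. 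For (i) I would take the pair ideal $I=(I^+,I^-)$ and check that $K:=I^+\oplus I^-$ absorbs $P(T(V))K$, $P(K)T(V)$ and $\{T(V),T(V),K\}$: by the product formulas each splits into its two $\sigma$-summands, and each summand is exactly one of $Q(I^\sigma)V^{-\sigma}\subseteq I^\sigma$, $Q(V^\sigma)I^{-\sigma}\subseteq I^\sigma$, $\{V^\sigma,V^{-\sigma},I^\sigma\}\subseteq I^\sigma$; gradedness of $K$ is immediate from $K=\bigoplus_\la(I^+[\la]\oplus I^-[\la])$.

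The substance of the lemma is (ii), and its heart is the refined containment. First $\underset{\widetilde{}}{J}^\sigma=J\cap V^\sigma$ is graded, and $\tilde J^\sigma=\pi^\sigma(J)$ is graded because $J=\bigoplus_\la(J\cap T(V)^\la)$ forces $\pi^\sigma(J)\cap V^\sigma[\la]=\pi^\sigma(J\cap T(V)^\la)$; the sandwich $\underset{\widetilde{}}{J}^+\oplus\underset{\widetilde{}}{J}^-\subseteq J\subseteq\tilde J^+\oplus\tilde J^-$ is then clear. For the containment I would use the \emph{polarization collapse}: given $z\in J$ put $z^\sigma=\pi^\sigma(z)$; for $v\in V^{-\sigma}$, regarded as an element of $T(V)$ with vanishing $\sigma$-component, the product $P(z)v$ reduces to $Q(z^\sigma)v$ because the complementary component of $z$ is annihilated by $v$. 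Since $J$ is a triple ideal, $P(z)v\in J$, while $Q(z^\sigma)v\in V^\sigma$, so $Q(z^\sigma)v\in J\cap V^\sigma=\underset{\widetilde{}}{J}^\sigma$, giving $Q(\tilde J^\sigma)V^{-\sigma}\subseteq\underset{\widetilde{}}{J}^\sigma$. The same collapse applied to $P(v)z$ (with $v\in V^\sigma$) and to $\{v,w,z\}$ (with $v\in V^\sigma$, $w\in V^{-\sigma}$) yields $Q(V^\sigma)\tilde J^{-\sigma}\subseteq\underset{\widetilde{}}{J}^\sigma$ and $\{V^\sigma,V^{-\sigma},\tilde J^\sigma\}\subseteq\underset{\widetilde{}}{J}^\sigma$. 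Since $\underset{\widetilde{}}{J}^\sigma\subseteq\tilde J^\sigma$, this shows at once that both $\tilde J$ and $\underset{\widetilde{}}{J}$ are graded ideals of $V$.

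Finally (iii) follows by combining (i) and (ii). In the direction $T(V)\Rightarrow V$, two graded ideals $I,K$ of $V$ with $Q(I^\sigma)K^{-\sigma}=0$ give via (i) graded ideals $\hat I=I^+\oplus I^-$, $\hat K=K^+\oplus K^-$ of $T(V)$ with $P(\hat I)\hat K=Q(I^+)K^-\oplus Q(I^-)K^+=0$, and the hypothesis on $T(V)$ then forces vanishing; the semiprime and simple statements (a nonzero pair ideal produces a nonzero triple ideal equal to $T(V)$, hence $I=V$) run the same way, the nondegeneracy clause being $P(T(V))T(V)\neq 0\iff Q(V^\sigma)V^{-\sigma}\neq 0$. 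In the direction $V\Rightarrow T(V)$, I would exploit the sandwich: from $P(K)K=0$ one reads $Q(\underset{\widetilde{}}{K}^\sigma)\underset{\widetilde{}}{K}^{-\sigma}=0$ by the collapse, so graded-semiprimeness of $V$ kills $\underset{\widetilde{}}{K}$; then (ii) gives $Q(\tilde K^\sigma)V^{-\sigma}=0$, and a second application of the hypothesis to the ideal $\tilde K$ forces $\tilde K=0$, whence $K\subseteq\tilde K^+\oplus\tilde K^-=0$. The prime version uses $Q(\tilde I^\sigma)\tilde K^{-\sigma}\subseteq Q(\tilde I^\sigma)V^{-\sigma}=0$ to conclude $\tilde I=0$ or $\tilde K=0$; for simplicity, a nonzero $K$ has $\tilde K\neq 0$, hence $\tilde K=V$, so $Q(V^\sigma)V^{-\sigma}\subseteq\underset{\widetilde{}}{K}^\sigma$ with $Q(V^\sigma)V^{-\sigma}\neq 0$ gives $\underset{\widetilde{}}{K}=V$ and $K=T(V)$.

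I expect the refined containment of (ii) to be the only delicate point, and the engine of the whole proof. It is not a formal consequence of $\tilde J$ being an ideal: one must realize each Jordan pair product $Q(\tilde J^\sigma)V^{-\sigma}$, $Q(V^\sigma)\tilde J^{-\sigma}$, $\{V^\sigma,V^{-\sigma},\tilde J^\sigma\}$ as a genuine triple product of an element of $J$ (so that it lies in $J$) that also happens to be pure of type $\sigma$ (so that it lies in $V^\sigma$), so that it falls into the \emph{smaller} ideal $J\cap V^\sigma$ rather than merely into $\tilde J^\sigma$. Tracking which $\sigma$-summands survive the polarization, and confirming this sharper landing, is where the care lies; everything else, including all the gradedness verifications, is routine bookkeeping.
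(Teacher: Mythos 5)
Your proof is correct and is exactly the intended argument: the paper gives no proof of this lemma, deferring to the ungraded case \cite[1.5]{NInv} and calling the graded adaptation straightforward, and your componentwise reading of $P(x)y=Q(x^+)y^-\oplus Q(x^-)y^+$, the polarization collapse establishing the refined containment in (ii), and the sandwich argument driving (iii) are precisely that adaptation. One line should be repaired: in the graded-prime direction $V\Rightarrow T(V)$ the chain $Q(\tilde J^\sigma)\tilde K^{-\sigma}\subseteq Q(\tilde I^\sigma)V^{-\sigma}=0$ is not available (the hypothesis is $P(I)K=0$, not $P(I)T(V)=0$), but the needed identity $Q(\tilde I^\sigma)\tilde K^{-\sigma}=0$ follows directly by reading off the $\sigma$-component of $P(z)w=0$ and of its linearization $P(z,z')w=0$ for $z,z'\in I$, $w\in K$, so the conclusion stands.
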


Idempotents in a Jordan pair $V$ and tripotents in $T(V)$ correspond
to each other naturally: Any idempotent $e=(e^+,e^-)$ of $V$, i.e.,
$Q(e^{\sigma})e^{-\sigma}=e^{\sigma}$, gives rise to the tripotent
$e^+\oplus e^-$ of $T(V)$, and conversely any tripotent of $T(V)$
arises in this way. Moreover, we have the following obvious though
fundamental fact. If $V=V_2(e)\oplus V_1(e)\oplus V_0(e)$ is the
Peirce decomposition of $V$ with respect to an idempotent
$e=(e^+,e^-)$, then the Peirce spaces of $T(V)$ with respect to
$e^+\oplus e^-$ are $T(V)_i(e^+\oplus e^-)=T\big(V_i(e)\big),\quad
i=0,1,2$. The following corollary is a consequence of
Lem.~\ref{polarized}(iii) and Th.~\ref{grsiminh}.

\begin{corollary} \label{jppeircesim} If\/ $0\ne e\in V[0]$ is an idempotent of a
graded-simple Jordan pair $V$, then the Peirce space $V_2(e)$ is
graded-simple and if $V_0(e)\neq 0$, then $V_0(e)$ is also
graded-simple.
\end{corollary}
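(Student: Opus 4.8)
The plan is to lift the statement to the associated polarized Jordan triple system $T(V)$, apply Theorem~\ref{grsiminh} there, and then descend again via Lemma~\ref{polarized}. First I would use Lemma~\ref{polarized}(iii) to transfer graded-simplicity upward: since $V$ is graded-simple, $T(V)$ is a graded-simple Jordan triple system. Next, using the correspondence between idempotents of $V$ and tripotents of $T(V)$ recalled just before the statement, the idempotent $e=(e^+,e^-)\in V[0]$ yields the tripotent $e^+\oplus e^-$ of $T(V)$; it is nonzero because $e\ne 0$, and it lies in $T(V)^0$ because $e\in V[0]$, i.e.\ $e^\sigma\in V^\sigma[0]$ for $\sigma=\pm$.

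Then I would apply Theorem~\ref{grsiminh} to the graded-simple $T(V)$ and the degree-$0$ tripotent $e^+\oplus e^-$. This gives that $T(V)_2(e^+\oplus e^-)$ is graded-simple and, provided $T(V)_0(e^+\oplus e^-)\ne 0$, that $T(V)_0(e^+\oplus e^-)$ is graded-simple as well. Invoking the Peirce-space identity $T(V)_i(e^+\oplus e^-)=T\big(V_i(e)\big)$, also recalled above, these two spaces are precisely $T\big(V_2(e)\big)$ and $T\big(V_0(e)\big)$. Finally I would reapply Lemma~\ref{polarized}(iii), now to the graded Jordan pairs $V_2(e)$ and (when nonzero) $V_0(e)$: since their associated polarized triple systems are graded-simple, the pairs themselves are graded-simple. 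Note that the two nonvanishing hypotheses match on both sides, since $V_0(e)\ne 0$ is equivalent to $T\big(V_0(e)\big)=T(V)_0(e^+\oplus e^-)\ne 0$.

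There is no genuine obstacle here; the only point requiring a line of care is to check that $V_2(e)$ and $V_0(e)$ are graded Jordan pairs whose associated polarized triple systems coincide with the corresponding Peirce subspaces of $T(V)$, so that Lemma~\ref{polarized}(iii) may legitimately be reapplied to them. This is immediate from $e\in V[0]$ (which makes the Peirce decomposition of $V$ compatible with the grading) together with the definition of the functor $T$.
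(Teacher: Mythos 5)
Your proposal is correct and follows exactly the route the paper indicates: it states that the corollary "is a consequence of Lem.~\ref{polarized}(iii) and Th.~\ref{grsiminh}," using precisely the idempotent--tripotent correspondence and the identity $T(V)_i(e^+\oplus e^-)=T\big(V_i(e)\big)$ that you invoke. Your write-up simply makes explicit the up-and-down transfer of graded-simplicity that the paper leaves to the reader.
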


\begin{definition}\label{jptriagdef} Recall \cite{N3gr} that a triple of nonzero idempotents $(u; e_1,
e_2)$ of a Jordan pair $V$ is a {\it triangle} if $e_i \in
V_0(e_j)$, $i\neq j$, $e_i \in V_2(u)$, $i=1,2$, $u \in V_1(e_1)\cap
V_1(e_2)$, and the following multiplication rules hold for
$\sigma=\pm$: $Q(u\si)e\msi_i = e\si_j$, $i\ne j$, and $Q(e\si_1,
e\si_2)u\msi = u\si$.

A graded Jordan pair $V$ is said to be {\it graded-triangulated} if
$V$ contains a triangle $(u; e_1, e_2)$ in $V[0]$ and $V=V_1\oplus
M\oplus V_2 $, where $V_i=V_2(e_i)$, $i=1,2$, and $M=V_1(e_1)\cap
V_1(e_2)$. It is then immediate that $V$ is graded-triangulated if
and only if $T(V)$ is so. Naturally, we call $V$ {\it
$\La$-triangulated} if $T(V)$ is so. However, rather than applying
the Jordan triple classification to graded-($\La$)-triangulated
Jordan pairs, we will use the Jordan algebra Classification
Th.~\ref{classalgth}. Namely, it is well known \cite[p.470]{N3gr}
that  $V$ is covered by a triangle $(u; e_1, e_2)$ if and only if
$V\cong (J,J)$ where $J$ is the homotope algebra $J=V^{+(e^-)}$,
$e=e_1+e_2$,  with multiplication $U_xy=Q(x)Q(e^-)y$ and unit
element $e^+$, which is covered by the triangle $(u^+; e_1^+,
e_2^+)$. If $V$ is graded-($\La$)-triangulated then so is $J$, and
$V\cong_\La (J,J)$, since the isomorphism $V\cong (J,J)$ is given by
$(\Id, Q(e^-))$. Conversely, if $J$ is a graded-($\La$)-triangulated
Jordan algebra (or Jordan triple system), then the associated Jordan
pair $(J,J)$ is graded-($\La$)-triangulated. We therefore obtain the
following two types of examples of graded-triangulated Jordan pairs.
\end{definition}

\begin{example}[A$'$]\label{jptriadef} Hermitian matrix pairs:
{\rm $V=(J,J)$ where $J=\rmH_2(A, A_0, \pi)$ is a  hermitian matrix
algebra as in example (A) of Def.~\ref{jatriadef}.

(B$'$) {\it Quadratic form pairs}: $V=(J,J)$ where
$J=\AC\alg(q,D,D_0)$ is a quadratic form Jordan algebra as in
example (B) of Def.~\ref{jatriadef}. We note that $V\cong_\La
\big(\AC(q,{\rm Id}, D_0),\allowbreak \AC(q,{\rm Id},D_0)\big)$
where $\AC(q,{\rm Id},D_0)$ is the  corresponding ample Clifford
system. }\end{example}

\begin{theorem} \label{classpairs} A graded-simple-triangulated Jordan pair
 satisfying \begin{enumerate}
\item[{\rm (a)}] every nonzero $m\in M^\sigma$ is a linear combination of
invertible homogeneous elements, or
\item[{\rm (b)}] the grading group $\La$ is torsion-free, \end{enumerate} is
graded isomorphic to a Jordan pair $(J,J)$ where $J$ is a
graded-simple-triangulated Jordan algebra as described in {\rm
Th.~\ref{classalgth}}. Conversely, all these Jordan pairs $(J,J)$
are graded-simple-triangulated.
\end{theorem}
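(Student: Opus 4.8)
The plan is to reduce the Jordan pair statement to the already-established Jordan algebra classification Th.~\ref{classalgth} via the homotope construction recalled in Def.~\ref{jptriagdef}. First I would produce the coordinatizing algebra. Let $V$ be graded-simple-triangulated by a triangle $(u; e_1, e_2)\subseteq V[0]$ and set $e=e_1+e_2$. Since $V$ is covered by this triangle, $e$ is an invertible idempotent, so the homotope $J:=V^{+(e^-)}$, with product $U_xy=Q(x)Q(e^-)y$ and unit $e^+$, is defined; by Def.~\ref{jptriagdef} it is graded-($\La$)-triangulated by $(u^+; e_1^+, e_2^+)$, and $(\Id, Q(e^-))$ is a graded isomorphism $V\cong_\La (J,J)$. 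It then remains to show that $J$ is a graded-simple Jordan algebra satisfying hypothesis (a) or (b) of Th.~\ref{classalgth}, for then $V\cong_\La (J,J)$ with $J$ of the asserted form.

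The key step is the correspondence between graded ideals of $J$ and of the pair $(J,J)$, which is where the unit of $J$ does the work. If $I=(I^+,I^-)$ is a graded ideal of $(J,J)$, then applying the ideal relation $Q(V^\sigma)I^{-\sigma}\subseteq I^\sigma$ to the homogeneous element $1_J\in J^0$ and using that $Q(1_J)=\Id$ (valid since $1_J$ is the unit of $J$, viewed as a tripotent of $T(J)$) gives $I^-=Q(1_J)I^-\subseteq I^+$ and symmetrically $I^+\subseteq I^-$, hence $I^+=I^-$; one checks directly that $I:=I^+=I^-$ is then a graded ideal of $J$. Conversely $(I,I)$ is a graded ideal of $(J,J)$ for every graded ideal $I$ of $J$. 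Since moreover $Q(V^+)V^-\supseteq Q(1_J)V^-=V^-\neq 0$ precisely when $J\neq 0$, this bijection shows that $(J,J)$ is graded-simple if and only if $J$ is. In particular graded-simplicity of $V$, equivalently of $(J,J)$, yields graded-simplicity of $J$. Alternatively one may route this through $T(V)=J\oplus J$ and Lem.~\ref{polarized}(iii), but the unit argument is more direct.

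Finally I would transfer the hypotheses and invoke Th.~\ref{classalgth}. Hypothesis (b) is a condition on the common grading group $\La$ and passes to $J$ verbatim. For (a), the off-diagonal space $M$ of $J$ is identified with $M^+$ under $V\cong_\La (J,J)$, and a homogeneous element of $M^+$ is invertible in the pair $V$ exactly when the corresponding element is invertible in the homotope $J$, because $Q(e^-)$ is bijective and hence $Q(x)Q(e^-)$ is bijective iff $Q(x)$ is; thus (a) for $V$ with $\sigma=+$ gives (a) for $J$. Hence $J$ is a graded-simple-triangulated Jordan algebra satisfying (a) or (b), and Th.~\ref{classalgth} identifies it among the algebras (I)--(III). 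The converse is immediate from the same ingredients: if $J$ is a graded-simple-triangulated Jordan algebra as in Th.~\ref{classalgth}, then $(J,J)$ is graded-($\La$)-triangulated by the last assertion of Def.~\ref{jptriagdef} and graded-simple by the ideal correspondence above, hence graded-simple-triangulated.

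The one step requiring genuine care is the ideal correspondence of the second paragraph, and even there the presence of the unit reduces it to a one-line application of the ideal axioms; the substantive classification content lives entirely in Th.~\ref{classalgth}, so I expect the main obstacle to be bookkeeping rather than a new idea.
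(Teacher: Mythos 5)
Your proposal is correct and follows essentially the same route as the paper: reduce to Th.~\ref{classalgth} via the homotope $J=V^{+(e^-)}$ and the graded isomorphism $V\cong_\La(J,J)$ from Def.~\ref{jptriagdef}, with the graded-simplicity transfer handled by the unit. The only cosmetic difference is that for the converse the paper cites Th.~\ref{classification} together with Lem.~\ref{polarized}(iii), whereas you reuse your ideal correspondence; both are routine and equivalent.
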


\begin{proof} Let $V$ be graded-simple-triangulated by $(u; e_1, e_2)$. Then $V$ is graded isomorphic to the Jordan pair of the unital Jordan algebra
$J=V^{+(e^-)}$. The algebra $J$ is then graded-simple with
$J^\la=V^+[\la]$, and graded-triangulated by $(u^+; e_1^+, e_2^+)$.
Thus $J$ is graded isomorphic to an algebra described in
Th.~\ref{classalgth}. Conversely, let $V=(J,J)$ be the Jordan pair
for $J$ as in (I)--(III). It follows from Th.~\ref{classification}
that the associated Jordan triple system $T(V)$ is
graded-simple-triangulated, hence $V$ is graded-simple-triangulated
by Lem. \ref{polarized}(iii).\end{proof}

\begin{definition} \label{jptordef}
A  graded   Jordan pair $V$ is said to be {\it division-graded} if
it is nonzero and e\-very nonzero e\-le\-ment in $V^\sigma[\lambda]$
is invertible in $V[\la]$. A graded-triangulated Jordan pair $V$
will be called {\it division-triangula\-ted}, respectively {\it
division-$\La$-triangulated\/}, if the associated Jordan triple
system $T(V)=V^+ \oplus V^-$ is so. Similarly, a
division-triangulated Jordan pair defined over a field $k$ is a {\it
triangulated Jordan pair torus\/} if $\dim_k V_i^\sigma[\lambda] \le
1$, $i=1,2$, and $\dim_k M^\sigma[\lambda] \le 1$ for $\sigma=\pm$.
The notion of a {\it $\La$-triangulated Jordan pair torus\/} is the
obvious one.

Since invertibility in $V$ is equivalent to inverti\-bi\-lity in the
unital Jordan algebra associated to $V$, we get the following
corollaries.
\end{definition}

\begin{corollary} \label{divpair} For a graded-triangulated Jordan
pair $V$ the following are equivalent:

\begin{enumerate}
 \item[\rm (i)] $V$ is graded-simple and every homogeneous
     $0\neq m\in M\si$, $\sigma = \pm$, is invertible,

 \item[\rm (ii)] $V$ is division-triangulated,

 \item[\rm (iii)] $V$ is graded isomorphic to the Jordan pair $(J,J)$ where
 $J$ is one of the Jordan algebras of {\rm Cor.~\ref{divalg}}.
\end{enumerate} \end{corollary}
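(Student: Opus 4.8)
The plan is to transfer all three conditions to the unital Jordan algebra $J=V^{+(e^-)}$ attached to the covering triangle and then quote Cor.~\ref{divalg}. By the discussion in Def.~\ref{jptriagdef}, the hypothesis that $V$ is graded-triangulated by $(u;e_1,e_2)\subseteq V[0]$ yields a graded isomorphism $V\cong_\La (J,J)$, where $J$ is graded-triangulated by $(u^+;e_1^+,e_2^+)$, has homogeneous spaces $J^\la=V^+[\la]$, and has off-diagonal Peirce space the off-diagonal part $M^+$ of $V^+$. It therefore suffices to show that each of the conditions (i), (ii), (iii) for $V$ is equivalent, under this identification, to the corresponding condition of Cor.~\ref{divalg} for $J$; the three equivalences for $V$ then follow from the equivalence (i)$\iff$(ii)$\iff$(iii) already established for $J$.

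First I would match condition (i). Graded-simplicity transfers: by Lem.~\ref{polarized}(iii), $V$ is graded-simple if and only if $T(V)$ is, and under $V\cong_\La (J,J)$ the triple system $T(V)$ is the polarized system $T(J)\oplus T(J)$, which is graded-simple exactly when $J$ is graded-simple --- this is the same reduction used in the proof of Th.~\ref{classpairs}. For the off-diagonal requirement I would invoke the equivalence, recorded just before the corollary, that invertibility in $V$ coincides with invertibility in $J=V^{+(e^-)}$; passing between $M^+$ and $M^-$ by means of the invertible operators $Q(u^\pm)$, the condition that every homogeneous $0\neq m\in M^\sigma$ be invertible in $V$ for both $\sigma=\pm$ becomes precisely the condition that every homogeneous $0\neq m\in M^+$ be invertible in $J$. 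Hence (i) for $V$ is equivalent to (i) of Cor.~\ref{divalg} for $J$. Condition (ii) is then immediate from Def.~\ref{jptordef}: $V$ is division-triangulated if and only if $T(V)$ is, which under the identification $T(V)=T(J)\oplus T(J)$ holds if and only if $J$ is division-triangulated. Finally, condition (iii) for $V$ amounts, via $V\cong_\La (J,J)$, to $J$ being graded isomorphic to one of the algebras listed in Cor.~\ref{divalg}(iii).

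With these translations in place, the corollary follows by applying Cor.~\ref{divalg} to $J$ and reading the result back through $V\cong_\La (J,J)$. The step I expect to require the most care is the bookkeeping around the polarization in condition (ii): one must verify that the diagonal Peirce subpairs $V_i$ and the off-diagonal space of $V$ inherit their division and invertibility properties from $J$ consistently through $T(V)=T(J)\oplus T(J)$, using that $Q(u)$ identifies the positive and negative parts and that invertibility in the pair is governed by the homotope. All of this is routine given Lem.~\ref{polarized}, the invertibility equivalence, and the structural descriptions in Cor.~\ref{divalg}; no Jordan-theoretic input beyond the graded-simple classification of Th.~\ref{classpairs} is needed.
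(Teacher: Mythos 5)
Your proposal is correct and follows exactly the route the paper takes: the paper justifies this corollary (and Cor.~\ref{jordantori}) with the single remark in Def.~\ref{jptordef} that invertibility in $V$ is equivalent to invertibility in the associated unital Jordan algebra $J=V^{+(e^-)}$, so that everything reduces to Cor.~\ref{divalg} via $V\cong_\La (J,J)$. Your write-up simply fills in the routine transfer of graded-simplicity (via Lem.~\ref{polarized}) and of the division/invertibility conditions, all of which matches the paper's intended argument.
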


\begin{corollary} \label{jordantori} $V$ is a triangulated Jordan
pair torus iff $V$ is graded isomorphic to a Jordan pair $(J,J)$
where $J$ is one of the following:
\begin{enumerate}
\item[{\rm (I)}] $J=\rmH_2(A,A_0, \pi)$ is a diagonal hermitian matrix algebra
of a noncom\-mu\-ta\-ti\-ve  torus $A$;

\item[{\rm (II)}] $J=\AC\alg(q, F, F_0)$ for a graded-anisotropic $q$
over a graded-field $F$ with Clifford-ample subspace $F_0$, with
$F=D$ and $M$ as described in {\rm Cor.~\ref{clifforddivcor}(b).}
\end{enumerate}
\end{corollary}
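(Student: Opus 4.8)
The plan is to reduce the statement to the already-established classification of division-triangulated Jordan algebras (Cor.~\ref{divalg}) via the homotope equivalence between triangulated Jordan pairs and triangulated Jordan algebras, and then to refine that result by imposing the torus dimension conditions. First I would observe that any triangulated Jordan pair torus is in particular division-triangulated, so that Cor.~\ref{divpair} applies: $V$ is graded isomorphic to $(J,J)$ for the homotope algebra $J=V^{+(e^-)}$, which is one of the two division-triangulated Jordan algebras of Cor.~\ref{divalg}, namely a diagonal hermitian matrix algebra $\rmH_2(A,A_0,\pi)$ with $A$ noncommutative division-graded, or a quadratic form Jordan algebra $\AC\alg(q,F,F_0)$ with $q$ graded-anisotropic over a graded-field $F$.

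Next I would track the torus dimension conditions through this correspondence. The isomorphism $V\cong_\La (J,J)$ is induced by $(\Id, Q(e^-))$ (see Def.~\ref{jptriagdef}), so that $J^\la = V^+[\la]$ and the graded operator $Q(e^-)$ identifies $V^-[\la]$ with $J^\la$; likewise the Peirce components of $J$ correspond to those of $V^+$. Consequently the conditions $\dim_k V_i^\sigma[\la]\le 1$ and $\dim_k M^\sigma[\la]\le 1$ for $\sigma=\pm$ are equivalent to $\dim_k J_i^\la\le 1$ and $\dim_k M^\la\le 1$ in $J$, i.e.\ to $J$ being a triangulated Jordan algebra torus (equivalently, by Def.~\ref{jordalgtor}, to $T(J)$ being a triangulated Jordan triple torus). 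Thus $V$ is a triangulated Jordan pair torus if and only if $J$ is a triangulated Jordan algebra torus.

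It then remains to specialize the two cases of Cor.~\ref{divalg} to the torus setting. In the hermitian case, Lem.~\ref{hermtor}(b), applied to $T(J)=\rmH_2(A,A_0,\pi,\Id)$, shows that $\rmH_2(A,A_0,\pi)$ with $A$ division-graded is a torus precisely when $A$ is a (noncommutative) torus, yielding case (I). In the Clifford case, Cor.~\ref{clifforddivcor}(b) characterizes exactly when $\AC\alg(q,F,F_0)$ is a torus in terms of $F=D$ being a torus and $M$ being a free $D$-module with the stated homogeneous-basis conditions, yielding case (II). For the converse I would simply reverse this chain: each listed $J$ is a triangulated Jordan algebra torus by the quoted corollaries, whence $(J,J)$ is a triangulated Jordan pair torus. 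I do not expect a genuine obstacle here; the only point requiring care is verifying that the $\sigma=-$ dimension conditions are automatic once the $\sigma=+$ ones hold, which is exactly what the graded isomorphism $Q(e^-)$ guarantees.
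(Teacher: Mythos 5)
Your proposal is correct and follows the route the paper intends: the paper derives Cor.~\ref{jordantori} (without writing out details) from the homotope identification $V\cong_\La (J,J)$, $J=V^{+(e^-)}$, together with Cor.~\ref{divalg}, Lem.~\ref{hermtor}(b) and Cor.~\ref{clifforddivcor}(b), exactly as you do. Your explicit tracking of the dimension conditions through $(\Id, Q(e^-))$ is the one point the paper leaves tacit, and you handle it correctly.
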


\section{Graded-simple Lie algebras of type $\rmb_2$}
\label{b2lie}

In this section we  apply our results on graded-simple-triangulated
 Jordan algebras and pairs from the previous
section~\ref{sec:triangjp} and obtain a classification of
$(\rmb_2,\La)$-graded-simple and centreless
division-$(\rmb_2,\La)$-graded Lie algebras in  Th.~\ref{secfinres}
and Th.~\ref{findivres}. In particular, we classify centreless Lie
tori of type $(\rmb_2,\La)$ in Cor.~\ref{finalres}.

We begin by recalling the relevant definitions from the theory of
root-graded Lie algebras (Def.~\ref{liedef} and
Def.~\ref{lietordef}). The link to triangulated Jordan structures is
given by the Tits-Kantor-Koecher construction, reviewed in
\ref{revtkk} in general and then in Prop.~\ref{liecla} for the
particular types of Lie algebras studied in this section.

Since we realize $\rmb_2$-graded Lie algebras as central extensions
of Tits-Kantor-Koecher algebras of triangulated Jordan pairs, {\it
we assume in this section that all Lie algebras, Jordan pairs and
related algebraic structures are defined over a ring $k$ in which
$2\cdot 1_k$ and $3\cdot 1_k$ are invertible.}

\begin{definition} \label{liedef}
Let $R$ be a finite reduced root system ($R$ could even only be
locally finite in the sense of \cite{LN}). We suppose that $0\in R$,
and denote by ${\mathcal{Q}(R)}$ the root lattice of $R$. A Lie
algebra $L$ over $k$ is called {\it $(R,\La)$-graded} if
\begin{enumerate}
\item[{\rm (1)}] $L$ has a compatible $\mathcal{Q}(R)$- and
$\La$-gradings, i.e., $L=\oplus_{\la \in \La}L^\la$ and $L =
\allowbreak \oplus_{\al \in \mathcal{Q}(R)}L_\al$ such that for
$L^\la_\al=L^\la\cap L_\al$ we have
$$L_\al=\oplus_{\la \in \La}L^\la_\al,\quad L^\la=\oplus_{\al \in
\mathcal{Q}(R)}L_\al^\la,\quad \text{and}\quad
[L_\al^\la,L_\be^\kappa]\subseteq L_{\al+\be}^{\la+\kappa},$$ for
$\la, \kappa \in \La$, $\al, \be \in \mathcal{Q}(R)$.

\item[\rm (2)] $\{\al\in \scQ(R) : L_\al \ne 0\}\subseteq R$.

\item[{\rm (3)}] for every $0\neq \al \in R$ the homogeneous space
$L_\al^0$ contains an element $e\ne 0$ that is {\it invertible} in
the following sense: There exists $f\in L_{-\al}^0$ such that $h=[ e
, f]$ acts on $L_\be$, $\be \in R$, by
\begin{equation} \label{eq:liedef1}
[h, x_\be]=\lan \be,\al\ch\ran x_\be,\quad x_\be\in
 L_\be \end{equation}
where $\lan \al,\be\ch\ran$ denotes the Cartan integer of the two
roots $\al,\be\in R$.

\item[{\rm (4)}] $L_0=\sum_{0\ne \al\in R}[L_\al, L_{-\al}]$, and
 $\{\la \in \La: L_\al^\la \ne 0 \text{ for some } \al \in R \}$
spans $\La$ as abelian group.
\end{enumerate}
\noindent It follows that $\{\al\in \scQ(R) : L_\al \ne 0\}= R$.
Also, any invertible element $e$ generates an ${\frak sl}_2$-triple
$(e, h, f)$.  If $\La$ is not spanned by the support, we will simply
speak of an {\it $R$-graded Lie algebra with a compatible
$\La$-grading\/}. An $(R,\La)$-graded or $R$-graded Lie algebra with
a compatible $\La$-grading  is {\it graded-simple} if it is
graded-simple with respect to the $\La$-grading. A Lie algebra  is
{\it $(R,\La)$-graded-simple}, or {\it $R$-graded-simple} if it is
$(R,\La)$-graded and graded-simple, respectively $R$-graded with a
compatible graded-simple $\La$-grading.\sm

The definition of a root-graded Lie algebra is taken from
\cite{N3gr}. Originally, root-graded Lie algebras were defined over
fields of characteristic $0$ by a different system of axioms,
\cite{BM} and \cite{BZ}. As explained in \cite[Remark 2.1.2]{N3gr},
an $R$-graded Lie algebra in the sense of  \cite{BZ} and \cite{BM}
is the same as an $R$-graded Lie algebra as defined above. A lot is
known about the structure of root-graded Lie algebras, see
\cite[5.10]{Npers} for a summary of results. We will use here that
$L$ is a Lie algebra graded by a $3$-graded root system $R$ iff $L$
is a central covering of the Tits-Kantor-Koecher algebra $\frK(V)$
of a Jordan pair $V$ covered by a grid whose associated root system
is $R$ (\cite[2.7]{N3gr}). \end{definition}

\begin{review} \label{revtkk} {\bf Review of TKK-algebras.}
{\rm  Recall, see e.g. \cite[1.5]{N3gr}, that the
Tits-Kantor-Koecher algebra\/ $\frK(V)$ of a Jordan pair $V$, in
short the {\it TKK-algebra of $V=(V^+, V^-)$\/}, is a $\ZZ$-graded
Lie algebra defined on the $k$-module
$$\frK(V) = V^- \oplus \de(V^+, V^-) \oplus V^+$$ where $\de(V^+,
V^-)$ is the span of all inner derivations $\de(x,y) = (D(x,y),
\allowbreak - D(y,x))$, $(x,y)\in V$,  of $V$. The $\ZZ$-grading
$\frK(V)= \bigoplus_{i\in \ZZ} \frK(V)_{(i)}$ is a {\it
$3$-grading\/} in the sense that it has support $\{0, \pm 1\}$,
namely $\TKK(V)_{(\pm 1)} = V^{\pm}$ and $\TKK(V)_{(0)} = \de(V^+,
V^-)$.  The Lie algebra product is determined by $[x^+,y^-] =
\de(x^+, y^-)$ and by the natural action of $\de(V^+, V^-)$ on
$V^\pm$: $[\de(x^+, y^-), u^+] = \{x^+,\, y^-,\, u^+\}$ and
$[\de(x^+, y^-), v^-] = - \{ y^-,\, x^+,\, v^-\}$. It is important
for the connection between Jordan theory and Lie algebras that,
conversely, for any $3$-graded Lie algebra $L=L_{(1)} \oplus L_{(0)}
\oplus L_{(-1)}$ the ``wings'' $(L_{(1)}, L_{(-1)})$ form a Jordan
pair $V_L$ with Jordan triple product
\begin{equation} \label{eq:3grpro}
   \{ x\si, \, y\msi, \, z\si\} =[[x\si, \,y\msi],\, z\si]
\end{equation}
for $x\si, z\si\in V\si=L_{(\sigma 1)}$ and $y\msi\in
V\msi=L_{(-\sigma 1)}$, $\sigma=\pm$. Moreover, the ideal $L'=
L_{(-1)} \oplus [L_{(-1)}, \, L_{(1)}] \oplus L_{(1)}$ of $L$ is a
central extension of the TKK-algebra $\TKK(V_L)$, namely $L'/C \cong
\TKK(V_L)$ for $$C=\{ d\in [L_{(-1)}, \, L_{(1)}] : [d, L_{\pm (1)}]
= 0 \}.$$ We note that because of $\frac{1}{2}, \frac{1}{3} \in k$,
a Jordan pair can be defined by the Jordan triple products $\{.,.,.
\}$. The formula (\ref{eq:3grpro}) is crucial: It allows one to
transfer properties between the Jordan pair and the associated Lie
algebras. An example is Prop.~\ref{liecla} below.

 A grading of $V$ by $\La$ extends to a grading of $\TKK(V)$ by $\La$
using the canonical grading of $\de(V^+, V^-)$. The gradings of
$\TKK(V)$ used in the following will all be induced in this way from
gradings of $V$. We point out that $\supp_\La V \subseteq \supp_\La
\TKK(V)$, but both span the same subgroup of $\La$. }\end{review}

\begin{definition} \label{lietordef} To define special cases of root-graded Lie algebras, we extend
the definition of an {\it invertible element\/} to any $e\in
L_\al^\la$, $\al \ne 0$, requiring the {\it inverse} $f\in
L_{-\al}^{-\la}$ and the equation (\ref{eq:liedef1}) for $h=[e,f]$.
 Then, an $(R,\La)$-graded or $R$-graded Lie algebra $L$ with a compatible $\La$-grading  is {\it division-graded}
if every nonzero element in $L_\al^\la$, $\al\ne 0$, is invertible,
and a {\it Lie torus} if it is division-graded, $k$ is a field and
$\dim_k L_\al^\la \le 1$ for all $0\ne \al\in R$. As usual, in this
case we will speak of a {\it division-$(R,\La)$-graded} Lie algebra
and a {\it Lie torus of type $(R,\La)$\/}, or {\it
division-$R$-graded} Lie algebras and {\it Lie tori of type $R$\/}
if $\La$ is not necessarily spanned by the $\La$-support.
\end{definition}
\sm

We now specialize $R=\rmb_2 = {\rm C}_2 = \{0\}\cup \{\pm \ep_1 \pm
\ep_2\} \cup \{ \pm 2\ep_1, \pm 2\ep_2\}$ and observe that $R$ is
$3$-graded with $1$-part $R_1=\{ 2\ep_1, \ep_1 + \ep_2, 2\ep_2\}$
(an isomorphic realization of this $3$-graded root system will be
used in Ex.~\ref{tkkqfexam}.

\begin{proposition} \label{liecla}
{\rm (a)} The TKK-algebra $\TKK(V)$ of a graded-triangulated Jordan
pair  $V=V_1 \oplus M \oplus V_2$  is $\rmb_2$-graded with
compatible $\La$-grading. Its homogeneous spaces are
\begin{align*}
  \TKK(V)_{\pm 2\ep_i}^\la &= V_i^\pm [\la], \\
  \TKK(V)_{\pm (\ep_1+\ep_2)}^\la &= M^\pm [\la],  \\
  \TKK(V)_{\ep_i-\ep_j}^\la  &= \delta (e_i^+,M^- [\la]),
           \quad (i,j)\in \{(1,2), (2,1)\}, \\
  \TKK(V)_0^\la &= \sum_{i=1,2}\delta (e_i^+ ,V_i^- [\la])+ \delta (u^+ ,M^-
     [\la]).
\end{align*}
Conversely, if $L$ is a $\rmb_2$-graded Lie algebra with compatible
$\La$-grading, then its centre $Z(L)$ is contained in $L_0$, namely
$Z(L) = \{ x\in L_0 : [x, L_\al]=0 \text{ for } \al \in (\pm
R_1)\}$, and $L/Z(L)$ is graded isomorphic to the TKK-algebra of the
graded-triangulated Jordan pair $V=(V^+, V^-)$ given by
$$
   V_i^{\pm}[\la] = L_{\pm 2\ep_i}^\la\quad\text{and}\quad
    M^\pm[\la] = L^\la_{\pm(\ep_1+\ep_2)}.
$$

{\rm (b)} Let $L$ be a $\rmb_2$-graded Lie algebra with compatible
$\La$-grading and let $V$ be the associated graded-triangulated
Jordan pair defined in {\rm (a)}. Then $L$ is
\begin{itemize}
\item[\rm (i)]   graded-simple if and only if
 $L=\TKK(V)$ and $V$ is graded-simple;

\item[\rm (ii)]  division-graded if and only if $V$ is
division-trian\-gu\-la\-ted;

\item[\rm (iii)] a Lie torus if and only if\/ $V$ is a triangulated Jordan pair
torus.

\item[\rm (iv)] $(\rmb_2,\La)$-graded iff\/ $V$ is
$\La$-triangulated. In particular, $L$ is
division-$(\rmb_2,\La)$-graded iff\/ $V$ is
division-$\La$-triangulated, and a Lie torus of type $(\rmb_2,\La)$
iff\/ $V$ is a $\La$-triangulated Jordan pair torus.
\end{itemize}
\end{proposition}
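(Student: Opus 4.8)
The plan is to treat Proposition~\ref{liecla} as a dictionary built on the grid-theoretic correspondence \cite[2.7]{N3gr} reviewed in \ref{revtkk}: a Lie algebra is graded by a $3$-graded root system $R$ if and only if it is a central covering of $\TKK(V)$ for a Jordan pair $V$ covered by a grid of type $R$. Here $R=\rmb_2$ is $3$-graded with $R_1=\{2\ep_1,\ep_1+\ep_2,2\ep_2\}$, and the grid realizing $\rmb_2$ is precisely a triangle $(u;e_1,e_2)$, with $e_i\leftrightarrow 2\ep_i$ and $u\leftrightarrow \ep_1+\ep_2$. So the bulk of the work is bookkeeping on top of that cited result and the Jordan facts already established.

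For part (a), forward direction, I would start from a graded-triangulated $V$ and read off the $\scQ(\rmb_2)$-grading of $\TKK(V)$ from the Peirce decomposition $V=V_1\op M\op V_2$. First define the $3$-grading via the functional $\ell(a\ep_1+b\ep_2)=\tfrac12(a+b)$, so that $\TKK(V)_{(\pm1)}=V^\pm$ and $\TKK(V)_{(0)}=\delta(V^+,V^-)$. Inside $V^\pm$ the triangle splits $V^\pm=V_1^\pm\op M^\pm\op V_2^\pm$ into the root spaces $\pm2\ep_i$ and $\pm(\ep_1+\ep_2)$, while inside $\delta(V^+,V^-)$ the inner derivations split according to which Peirce space they shift, giving the stated $0$- and $\pm(\ep_1-\ep_2)$-spaces $\delta(e_i^+,M^-)$ and $\sum_i\delta(e_i^+,V_i^-)+\delta(u^+,M^-)$. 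Compatibility with $\La$ holds because the grading of $\delta(V^+,V^-)$ is induced from that of $V$. Axioms (RG1), (2) and (RG2)/(4) are then immediate from \eqref{eq:3grpro}, and the invertibility axiom (3) in each degree-$0$ root space is exactly what grid theory guarantees: $(e_i^+,e_i^-)$ and $(u^+,u^-)$ give $\mathfrak{sl}_2$-triples for $2\ep_i$ and $\ep_1+\ep_2$, and $\delta(e_1^+,u^-)\in\TKK(V)^0_{\ep_1-\ep_2}$ provides one for $\pm(\ep_1-\ep_2)$ via the triangle relations $Q(u)e_i=e_j$, $Q(e_1,e_2)u=u$.

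For the converse, given $\rmb_2$-graded $L$, the same $\ell$ endows $L$ with a $3$-grading whose wings $(L_{(1)},L_{(-1)})$ form a Jordan pair $V$ through \eqref{eq:3grpro}, with $V_i^\pm[\la]=L^\la_{\pm2\ep_i}$ and $M^\pm[\la]=L^\la_{\pm(\ep_1+\ep_2)}$. I would first show $Z(L)\subseteq L_0$: a homogeneous central element lies in the kernel of every semisimple $h$ arising from the $\mathfrak{sl}_2$-triples of (3), and since the eigenvalues $\lan\be,\al\ch\ran$ separate distinct root spaces, its component in each nonzero root space must vanish. The refined description $Z(L)=\{x\in L_0:[x,L_\al]=0,\ \al\in\pm R_1\}$ follows because each short root space $L_{\pm(\ep_1-\ep_2)}$ lies in $[L_{\pm R_1},L_{\mp R_1}]$ (e.g.\ $\ep_1-\ep_2=2\ep_1-(\ep_1+\ep_2)$), so a degree-$0$ element annihilating $L_{\pm R_1}$ annihilates everything by Jacobi. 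As $\TKK(V)$ is centreless, the central covering $L\to\TKK(V)$ of \cite[2.7]{N3gr} has kernel exactly $Z(L)$, yielding the graded isomorphism $L/Z(L)\cong\TKK(V)$ and simultaneously exhibiting $V$ as graded-triangulated by the triangle $(u^+;e_1^+,e_2^+)$ coming from the degree-$0$ invertible elements of $L$.

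Part (b) is a translation across $L/Z(L)\cong\TKK(V)$. For (i), $Z(L)$ is a graded ideal, so graded-simplicity of $L$ forces $Z(L)=0$, i.e.\ $L=\TKK(V)$, and the correspondence between graded ideals of $\TKK(V)$ and of $V$ (Lem.~\ref{polarized}(iii) together with the TKK ideal dictionary) makes $L$ graded-simple iff $V$ is. For (ii) and (iii) I would match invertible elements and homogeneous dimensions through the dictionary: the $\pm2\ep_i$- and $\pm(\ep_1+\ep_2)$-spaces reproduce exactly the conditions that $V_i$ be division-graded and that homogeneous $m\in M$ be invertible, while the short root spaces $L_{\pm(\ep_1-\ep_2)}$, being of the form $\delta(e_i^+,M^-)$, inherit both invertibility and the bound $\dim_k L^\la_{\ep_1-\ep_2}\le\dim_k M^\la$ from $M$, so no new condition appears. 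For (iv) I would use that $\supp_\La V$ and $\supp_\La\TKK(V)$ span the same subgroup (\ref{revtkk}), so the $\La$-spanning axiom matches $\La$-triangulation, and the final assertions combine (iv) with (ii) and (iii). I expect the main obstacle to be the converse of (a): pinning down $Z(L)$ precisely and verifying that the degree-$0$ part of $L/Z(L)$ is exactly $\delta(V^+,V^-)$, for which the decomposition of the short root spaces as brackets $[L_{(1)},L_{(-1)}]$ and the centrelessness of $\TKK(V)$ are the essential inputs.
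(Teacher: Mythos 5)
Your proposal is correct and follows essentially the same route as the paper: part (a) as the graded version of the TKK/grid correspondence of \cite[2.3--2.6]{N3gr} with the centre identified as the central subspace $C$ of \ref{revtkk}, part (b)(i) via the general transfer of graded-simplicity between $V$ and $\TKK(V)$ (the paper cites \cite[2.5]{GN} for this), and (b)(ii)--(iv) by matching invertible elements root space by root space, with the short root spaces handled through $L_{\ep_1-\ep_2}\subseteq[L_\al,L_{-\be}]$ for $\al,\be\in R_1$. The only difference is one of detail: the paper delegates most steps to the cited references and leaves the invertibility transfer in $R_0$ to the reader, whereas you reconstruct these arguments explicitly.
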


\begin{proof} (a) is the graded version of \cite[2.3--2.6]{N3gr}. The
generalization from $\La=\{0\}$ to arbitrary $\La$ is immediate. We
note that for a root-graded Lie algebra $L$ we have $L/L'$ and the
centre of $L$ coincides with the central subspace $C$ defined above.

In (b) the equivalence of graded-simplicity is a general fact
(\cite[2.5]{GN}), and (iii) is immediate from (ii) and the formulas
in (a). For the proof of (ii) one shows that $e\in L_\al$, $\al\in
R_{\pm 1}$, is invertible in the sense of Def.~\ref{liedef}, say
with inverse $f\in L_{-\al}$, iff $e$ has the appropriate
invertibility property in the Jordan pair $V$, again with inverse
$f$. This proves in particular the implication $\implies$. For the
other direction, it then suffices to show that for a root-graded Lie
algebra $L$ invertibility in the spaces $L_\al$, $\al \in R_1$,
forces invertibility in $L_\gamma$, $\gamma \in R_0$. This can be
done by using $L_\gamma = [L_\al, L_{-\be}]$ for appropriate
$\al,\be\in R_1$. We leave the details to the reader, in particular
in view of Rem.~\ref{Faulkner}.
\end{proof}

\begin{remark} \label{Faulkner} In characteristic $0$, a centreless Lie torus of
type $(\rmb_2, \ZZ^n)$ is the same as the centreless core of an
extended affine Lie algebra of type $\rmb_2$. The latter has been
studied in \cite[\S4]{AG} for $k=\CC$. Therefore, in this setting
the torus part of the theorem above is implicit in \cite[\S4]{AG}.

One can also define Lie algebras graded by non-reduced root systems.
A $\rmb_2$-graded Lie algebra is then a special type of a
$\rmbc_2$-graded Lie algebra. In the setting of $\rmbc_2$-graded Lie
algebras the torus version of  Prop.~\ref{liecla}  has been proven
in \cite[Th.~3]{F}, where Jordan pairs and Jordan algebras are
replaced by structurable algebras, and a triangulated Jordan algebra
torus by a so-called quasi-torus.
\end{remark}

\begin{remark}  We have formulated Prop.~\ref{liecla} in terms of
triangulated Jordan pairs, since it is in this setting that the
proposition can be generalized to describe division-$R$-graded Lie
algebras and Lie tori of type $R$ for any $3$-graded root system
$R$. We will however not need this here.  \end{remark}

\begin{remark} We have seen in Th.~\ref{classpairs} that a graded-triangulated
Jordan pair $V=(V^+, V^-)$ is isomorphic to the Jordan pair $(J,J)$
associated to a graded-triangulated Jordan algebra $J$. Since
isomorphic Jordan pairs lead to isomorphic TKK-algebras, one also
has the Jordan algebra version of Prop.~\ref{liecla}. We leave the
formulation to the reader. \end{remark}

\begin{remark} For easier comparison with the
literature (\cite{AG,BY,F}) we indicate how to ``find'' the
$\La$-triangulated Jordan algebra $J$ in a $\rmb_2$-graded Lie
algebra $L$, using the notation of above. As a $k$-module, $J=
\bigoplus_{\la \in \La} (L_{2\ep_1}^\la \oplus L_{\ep_1 + \ep_2}^\la
\oplus L_{2\ep_2}^\la)$. For $a,b\in J$, the Jordan algebra product
is given by $a \cdot b = \frac{1}{2}[[a, 1^-], b]$ where $1^- = f_1
+ f_2$ and $f_i\in L_{-2\ep_i}^0$ is the inverse of the invertible
element $e_i \in L_{2\ep_i}$ whose existence is guaranteed by
condition (3) in Def.~\ref{liedef}. The identity element of $J$ is
$1_J = e_1 + e_2$, the elements $e_i$ are idempotents of $J$, and
$J$ is triangulated by $(u; e_1, e_2)$ for $u$ the invertible
element in $L_{\ep_1 + \ep_2}^0$.
\end{remark}

Prop.~\ref{liecla} reduces the classification of the various types
of $\rmb_2$-graded Lie algebras to determining the TKK-algebras of
the corresponding triangulated Jordan pairs and Jordan algebras,
which we have described in the previous section
\S\ref{sec:triangjp}. Models for these TKK-algebras were given in
\cite{N3gr} for arbitrary triangulated Jordan pairs over arbitrary
rings. A more precise description can be obtained in the
graded-simple and division-graded case. To this end, we re-visit the
description of the TKK-algebras of the two types of Jordan pairs and
algebras that appear in the classification of division-triangulated
Jordan pairs and algebras in Cor.~\ref{divpair} and
Cor.~\ref{divalg}.

 \begin{example} \label{lieexamher}
The TKK-algebra of\/ the Jordan pair $(J,J)$ for $J=\rmH_2(A,\pi)$,
equivalently, of the Jordan algebra $J$. {\rm (\cite[4.2]{N3gr}).
Note that the ample subspace $A_0$ of the hermitian matrix algebra
is $A_0= \rmH(A,\pi)$ since $\frac{1}{2} \in k$. Let
$$
\fru_2(A,\pi):=\left\{ \left[ \begin{array}{cc}
a & b  \\
c & -a^{\pi t}
\end{array}\right] \in \Mat_4(A) :  \,a\in \Mat_2(A), b,c \in J\right\}.
$$
It is easy to see that $\fru_2(A,\pi)$ is the $-1$-eigenspace of an
involution of the associative algebra $\Mat_4(A)$, hence a
subalgebra of the general Lie algebra $\mathfrak{gl}_4(A)$. 
The natural $3$-grading of $\mathfrak{gl}_4(A)$ induces one of
$\fru_2 =\fru_2(A,\pi)$: We have $\fru_2=\fru_{2,\, (1)}\oplus
\fru_{2,\, (0)}\oplus \fru_{2,\, (-1)}$, where
\begin{eqnarray*}
\fru_{2,\, (1)} &=& \left\{ \left[ \begin{array}{cc} 0 & b \\ 0 & 0
\end{array}\right] : \,b \in \rmH_2(A,\pi)\right\}, \\
\fru_{2,\, (0)} &=& \left\{ \left[ \begin{array}{cc} a & 0 \\ 0 &
-a^{\pi t} \end{array}\right] : \,a \in \Mat_2(A)\right\}, \\
\fru_{2,\, (-1)}&=&\left\{ \left[ \begin{array}{cc} 0 & 0 \\ c & 0
\end{array}\right] : \,c \in \rmH_2(A,\pi)\right\}.
\end{eqnarray*}
The Lie algebra $\fru$ has a compatible $\La$-grading
$\fru=\bigoplus_{\la \in \La} \fru^\la$ for which $\fru^\la$
consists of the matrices with all entries in $A^\la$. The Jordan
pair associated to this $3$-graded Lie algebra, see the review
\ref{revtkk}, is $V=(J,J)$. We put
$$
  \frsu_2(A, \pi) = \fru_{2,\, (1)}\oplus [\fru_{2,\, (1)},\,
         \fru_{2, \, (-1)}] \oplus \fru_{2,\, (-1)},
$$
called the {\it symplectic Lie algebra associated to $(A,\pi)$\/}.
The proof of \cite[III, Prop.~4.2(a), (b)]{aabgp} also works in our
more general setting and yields
$$
   \frsu_2(A, \pi) =  [\fru_2(A, \pi),\, \fru_2(A, \pi)] =
     \{ X\in \fru_2(A, \pi) : \text{tr}(X) \in [A,A]\}.
$$
The Lie algebra $\frsu_2(A,\pi)$ is ${\rm C}_2$-graded with root
spaces indicated in the following tableau:
$$ \left[
\begin{array}{cc|cc}
        0 & \ep_1 -\ep_2 & 2\ep_1 & \ep_1 + \ep_2 \\
      \ep_2 - \ep_1  & 0 & \ep_1 + \ep_2 & 2\ep_2 \\ \hline
      -2\ep_1 & -\ep_1 - \ep_2  & 0 & \ep_2 - \ep_1 \\
       -\ep_1 - \ep_2 & -2\ep_2 & \ep_1-\ep_2 & 0
\end{array} \right]
$$
It follows from Prop.~\ref{liecla} that
$$
  \TKK(V) \cong \frsu_2(A,\pi) / Z\big( \frsu_2(A,\pi)\big).
  $$
However, one has the following criterion for $\frsu_2(A,\pi)$ to be
centreless (again \cite[III, Prop.~4.2(d)]{aabgp} works in our more
general setting): }\end{example}

\begin{lemma}\label{tkkc2} If $A= Z(A) \oplus [A,A]$, e.g. if
$A$  is a torus\/ {\rm (\cite[Prop.~3.3]{NY})}, then
$\frsu_2(A,\pi)$ is centreless and hence is (isomorphic to) the
TKK-algebra of the Jordan algebra $J=\rmH_2(A,\pi)$ and the Jordan
$(J,J)$. \end{lemma}

\begin{example} \label{tkkqfexam} The TKK-algebra of the Jordan
pair $V=( J,J)$ for $J=\AC\alg(q, D)$. {\rm (\cite[5.1, 5.3]{N3gr})
As in Ex.~\ref{lieexamher} the Clifford-ample subspace $D_0=D$ since
$\frac{1}{2}\in k$. Thus $J=De_1 \oplus M \oplus De_2$ for a
 graded  commutative associative $k$-algebra $D$ and $q : M \to
D$ is a $D$-quadratic form.

For a $D$-quadratic form $q_N : N \to D$ on a $D$-module $N$ we
define the {\it orthogonal Lie algebra of\/ $q_N$\/} as $
  \fro(q_N) = \{ X\in \End_D(N) : q_N(Xn,n)=0
  \,\text{ for all } n\in N\}$,
and the {\it elementary orthogonal Lie algebra $\eso(q_N)$\/} as $
\eso(q_N)= \text{Span}_D \{ n_1n_2^*-n_2n_1^* : n_1,n_2 \in N \}$
where $n_1^*$ is the $D$-linear form on $N$ defined by $n_1^*(n)=
q_N(n_1, n)$.

To describe the TKK-algebra of $V$ or, equivalently of $J$, we put
$h_1=e_1$, $h_{-1} = e_2$ and define a $D$-quadratic form $q_\infty$
on
$$ J_\infty = D h_2 \oplus Dh_1 \oplus M \oplus Dh_{-1} \oplus Dh_{-2}
$$
by requiring $q_\infty | M = -q$, $(D h_2 \oplus Dh_{-2}) \perp
(Dh_1 \oplus Dh_{-1}) \perp M$, and $q_\infty (h_i, h_{-i}) =1$,
$q_\infty (h_{\pm i})= 0$ for $i=1,2$. It follows from
\cite[(5.3.6)]{N3gr} that
$$  \TKK(V)\cong \eso(q_\infty),$$
in particular, $\TKK(V) \cong \fro(q_\infty)$ if $M$ is
 free of finite rank.
To obtain a more detailed description of the TKK-algebra, {\it we
assume in the following that $M$ has a homogeneous $D$-basis $\{ u_i
: i\in I\}$}, an assumption which by Cor.~\ref{clifforddivcor} is
always fulfilled if $J$ is division-triangulated. Then $J_\infty$ is
free too and endomorphisms of $J_\infty$ can be identified with
column-finite $(4+|I|) \times (4+|I|)$-matrices over $D$, which we
do with respect to the basis $h_2, h_1, (u_i)_{i\in I}, h_{-1},
h_{-2}$. Let $G$ be the $|I|\times |I|$-matrix representing $q$ with
respect to the basis $(u_i)_{i\in I}$. Then $X\in \eso(q_\infty)
\iff $
$$ X=
   \left[ \begin{array}{cc|c|cc}
      a & b & -m_2^t G & -s & 0 \\
      c & d & -m_1^t G & 0 & s \\ \hline
     \begin{array}{c} \phantom{00} \\ n_1 \\ \phantom{00}\end{array}  & n_2 &
     \begin{array}{ccc} \phantom{00} & X_M & \phantom{00}\end{array}
     & m_1 & m_2 \\ \hline
    t  & 0  & -n_1^t G  & d & b \\
    0  & -t &  -n_2^t G  &  c & a \end{array} \right]
$$
where $a,b,c,d,s,t\in D$, $m_1, m_2, n_1, n_2\in D^{(I)} \cong M$
and $X_M \in \eso(q)$ (if $M$ has finite rank the latter condition
is equivalent to $GX_M + X_M^tG=0$). The Lie algebra
$\eso(q_\infty)$ has a $\rmb_2$-grading for $\rmb_2=\{0\} \cup \{
\pm \ep_i, \pm \ep_2, \pm \ep_1 \pm \ep_2\}$ whose homogeneous
spaces $\fro(q_\infty)_\al$, $\al \in \rmb_2$, are symbolically
indicated by the matrix below.
$$
   \left[ \begin{array}{cc|c|cc}
      0 & \ep_2-\ep_1 & \ep_2    & \ep_1 + \ep_2 & \cdot  \\
     \ep_1 - \ep_2 & 0 & \ep_1  & \cdot & \ep_1 + \ep_2 \\ \hline
     \begin{array}{c} \phantom{00} \\ -\ep_2 \\ \phantom{00}\end{array}
         & -\ep_1 &
     \begin{array}{ccc} \phantom{00} & 0 & \phantom{00}\end{array}
     & \ep_1 & \ep_2 \\ \hline
    -(\ep_1 + \ep_2) & \cdot  & -\ep_1   & 0 & \ep_2-\ep_1 \\
    \cdot  & -(\ep_1 + \ep_2) &  -\ep_2  &  \ep_1-\ep_2& 0 \end{array} \right]
$$
Here $0$ is the $0$-root space, while $\cdot$ indicates an entry $0$
in the matrices in $\fro(q_\infty)$. The isomorphism $\TKK(V) \cong
\eso(q_\infty)$ is given by considering the $3$-grading of the root
system $\rmb_2$ whose $1$-part is $\{ \ep_2, \ep_2 \pm \ep_1\}$.
Hence $$V^{\pm} = \fro(q_\infty)_{\pm (\ep_1 + \ep_2)} \oplus
\fro(q_\infty)_{\pm \ep_2} \oplus \fro(q_\infty)_{\pm (\ep_2 -
\ep_1)}$$ are the right respectively left columns of the matrices in
$\eso(q_\infty)$. }\end{example} Before we state the main results of
this section, we remind the reader that we assume $\frac{1}{2},
\frac{1}{3}\in k$ in this section.

\begin{theorem} \label{secfinres} For a torsion-free $\La$ the
following are equivalent: \begin{enumerate}

\item[\rm (i)] $L$ is a $\rmb_2$-graded-simple Lie algebra,

\item[\rm (ii)] $L$ is graded isomorphic to the TKK-algebra of a
graded-simple-trian\-gu\-lated Jordan algebra,

\item[\rm (iii)] $L$ is graded isomorphic to one of the following Lie
algebras:

\begin{enumerate}
 \item[\rm (I)] $\frsu_2(A,\pi)/Z(\frsu_2(A,\pi))$ for a graded-simple
$A$ with involution $\pi$,

\item[\rm (II)] $\mathfrak{sl}_4(B)/Z\big( \mathfrak{sl}_4(B)\big)$
where $\mathfrak{sl}_4(B) = \{ X\in \mathfrak{gl}_4(B): {\rm tr}(X)
\in [B,B]\}$, and $B$ is a noncommutative graded-simple associative
algebra,

\item[\rm (III)] $\eso(q_\infty)$ in the notation of
{\rm Ex.~\ref{tkkqfexam}} for  $D=F$  a graded-field and $q: M \to
F$ a graded-nondegenerate $F$-quadratic form on a graded $F$-module
$M$ with base point $u\in M^0$.
\end{enumerate}
\end{enumerate}
\end{theorem}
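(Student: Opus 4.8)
The plan is to route the whole equivalence through the Jordan-theoretic classification of the previous sections via the Tits--Kantor--Koecher dictionary of Prop.~\ref{liecla}, noting that $\La$ torsion-free is precisely hypothesis (b) of Th.~\ref{classpairs} and Th.~\ref{classalgth}.

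First I would prove (i)~$\Leftrightarrow$~(ii). If $L$ is $\rmb_2$-graded-simple, then it is in particular $\rmb_2$-graded with compatible $\La$-grading, so Prop.~\ref{liecla}(a) attaches to it a graded-triangulated Jordan pair $V=(V^+,V^-)$, and Prop.~\ref{liecla}(b)(i) forces $L=\TKK(V)$ with $V$ graded-simple. As $\La$ is torsion-free, Th.~\ref{classpairs} gives $V\cong_\La(J,J)$ for a graded-simple-triangulated Jordan algebra $J$; isomorphic Jordan pairs having isomorphic TKK-algebras, we obtain $L\cong_\La\TKK(J)$, which is (ii). Conversely, if $L\cong_\La\TKK(J)$ for such a $J$, the converse part of Th.~\ref{classpairs} makes $(J,J)$ graded-simple-triangulated, hence graded-simple, and Prop.~\ref{liecla}(b)(i) returns that $L$ is $\rmb_2$-graded-simple.

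Next I would prove (ii)~$\Leftrightarrow$~(iii) by feeding the three Jordan-algebra types of Th.~\ref{classalgth} into the explicit TKK-computations of Ex.~\ref{lieexamher} and Ex.~\ref{tkkqfexam}. Because $\tfrac12\in k$ the ample data collapse, $A_0=\rmH(A,\pi)$ and $F_0=F$. Type~(III) of Th.~\ref{classalgth}, $J=\AC\alg(q,F)$, yields $\TKK(J)\cong\eso(q_\infty)$ by Ex.~\ref{tkkqfexam}, giving case~(III) of (iii). Type~(I), $J=\rmH_2(A,\pi)$ with $A$ graded-simple, yields $\TKK(J)\cong\frsu_2(A,\pi)/Z(\frsu_2(A,\pi))$ by Ex.~\ref{lieexamher}, giving case~(I). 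For type~(II), $J=\Mat_2(B)$ with $B$ noncommutative graded-simple, I would write $\Mat_2(B)=\rmH_2(B\boxplus B^{\rm op},\pi)$ for the exchange involution and run the same construction, so that $\TKK(J)\cong\mathfrak{sl}_4(B)/Z(\mathfrak{sl}_4(B))$, giving case~(II). The direction (iii)~$\Rightarrow$~(ii) is then immediate, since each listed Lie algebra is by construction $\TKK(J)$ for the corresponding $J$, and all three $J$ are graded-simple-triangulated by the converse assertion of Th.~\ref{classalgth}.

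The main obstacle is the type~(II) identification: that $\frsu_2(\cdot,\pi)$, evaluated on the split algebra $A=B\boxplus B^{\rm op}$ with exchange involution, becomes $\mathfrak{sl}_4(B)$. I would handle this by tracking the involution of $\Mat_4(A)$ whose $(-1)$-eigenspace is $\fru_2(A,\pi)$ (Ex.~\ref{lieexamher}): under $A=B\boxplus B^{\rm op}$ the involution $\pi$ interchanges the two summands, so the eigenspace condition identifies $\fru_2(A,\pi)$ with $\mathfrak{gl}_4(B)$ and hence $\frsu_2(A,\pi)=[\fru_2(A,\pi),\fru_2(A,\pi)]=\{X\in\mathfrak{gl}_4(B):{\rm tr}(X)\in[B,B]\}=\mathfrak{sl}_4(B)$, compatibly with both the $\rmb_2$- and the $\La$-gradings. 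The remaining checks are bookkeeping: every graded isomorphism produced by Th.~\ref{classalgth} and Th.~\ref{classpairs} preserves the triangle, hence the induced $3$-grading, so the $\rmb_2$-grading, the $\La$-grading, and the passage to the centreless quotient all correspond under the displayed isomorphisms.
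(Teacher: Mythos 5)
Your proposal is correct and follows essentially the same route as the paper: (i)$\Leftrightarrow$(ii) via Prop.~\ref{liecla} and Th.~\ref{classpairs}, and (ii)$\Leftrightarrow$(iii) by running the three cases of Th.~\ref{classalgth} through Ex.~\ref{lieexamher} and Ex.~\ref{tkkqfexam}. The only difference is that for the type~(II) identification $\frsu_2(B\boxplus B^{\rm op},\pi)\cong\mathfrak{sl}_4(B)$ you supply the eigenspace argument directly, where the paper simply cites \cite[(3.4.3)]{N3gr}; your argument is a correct rendering of that reference.
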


\begin{proof} The equivalence of (i) and (ii) follows from Prop.~\ref{liecla}
and Th.~\ref{classpairs}. If (ii) holds, the cases (I) and (III) of
Th.~\ref{classalgth} correspond to the Lie algebras (I) and (III)
above, as follows from Ex.~\ref{lieexamher} and Ex.~\ref{tkkqfexam}.
That in case (II) of Th.\ref{classalgth} one gets case (II) above is
shown in \cite[(3.4.3)]{N3gr}. The remaining implication (iii)
$\Rightarrow$ (ii) is easy. \end{proof}

 With an analogous proof we obtain the classification of
$\rmb_2$-division-graded Lie algebras.

\begin{theorem} \label{findivres} For a Lie algebra $L$ the
following are equivalent: \begin{enumerate}

\item[\rm (i)] $L$ is a centreless division-$(\rmb_2,\La)$-graded Lie algebra,

\item[\rm (ii)] $L$ is graded isomorphic to the TKK-algebra of a
division-$\La$-triangu\-la\-ted Jordan algebra,

\item[\rm (iii)] $L$ is graded isomorphic to one of the following Lie
algebras:

\begin{enumerate}
 \item[\rm (I)] $\frsu_2(A,\pi)/Z(\frsu_2(A,\pi))$ where $A$ is a
noncommutative division-$\La$-graded associative algebra with
involution $\pi$ and generated by $\rmH(A,\pi)$,

\item[\rm (II)] $\eso(q_\infty)$ in the notation of
{\rm Ex.~\ref{tkkqfexam}} for $D=F$ a graded-field, $q: M \to F$ a
graded-anisotropic quadratic form on a graded F-module $M$ with base
point $u\in M^0$ and whose $\La$-support generates $\La$.
\end{enumerate}
\end{enumerate}
\end{theorem}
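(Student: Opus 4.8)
The plan is to follow the proof of Th.~\ref{secfinres} almost verbatim, substituting the division classifications (Cor.~\ref{divpair}, Cor.~\ref{divalg}) for the graded-simple ones. I would first prove the equivalence of (i) and (ii). If $L$ is centreless and division-$(\rmb_2,\La)$-graded, then in particular $L$ is $\rmb_2$-graded with compatible $\La$-grading, so Prop.~\ref{liecla}(a) gives $L = L/Z(L) \cong \TKK(V)$ for the associated graded-triangulated Jordan pair $V$, whose Peirce spaces are $V_i^\pm[\la] = L_{\pm 2\ep_i}^\la$ and $M^\pm[\la] = L_{\pm(\ep_1+\ep_2)}^\la$. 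By Prop.~\ref{liecla}(iv) the division-$(\rmb_2,\La)$-graded hypothesis is equivalent to $V$ being division-$\La$-triangulated, and the homotope construction recalled in Def.~\ref{jptriagdef} then yields $V \cong_\La (J,J)$ for the division-$\La$-triangulated Jordan algebra $J = V^{+(e^-)}$, $e = e_1+e_2$. Since isomorphic Jordan pairs have isomorphic TKK-algebras, $L \cong \TKK(J,J)$, which is (ii). Conversely, if $L \cong \TKK(J,J)$ with $J$ division-$\La$-triangulated, then $V=(J,J)$ is division-$\La$-triangulated, hence graded-simple, so by Prop.~\ref{liecla}(b)(i) the algebra $\TKK(V)$ is graded-simple and therefore centreless (its centre is a graded ideal contained in $L_0$ by Prop.~\ref{liecla}(a)); Prop.~\ref{liecla}(iv) returns the division-$(\rmb_2,\La)$-graded property, giving (i).

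For (ii) $\Leftrightarrow$ (iii) I would insert the classification of division-$\La$-triangulated Jordan algebras from Cor.~\ref{divalg} into the TKK-functor. That corollary leaves exactly two cases. In case (I), $J = \rmH_2(A,A_0,\pi)$ for a noncommutative division-$\La$-graded $A$; since $\frac{1}{2}\in k$ one has $A_0 = \rmH(A,\pi)$, and diagonality means that $\rmH(A,\pi)$ generates $A$. Ex.~\ref{lieexamher} then identifies $\TKK(J,J) \cong \frsu_2(A,\pi)/Z(\frsu_2(A,\pi))$, which is model (I) of (iii). In case (II), $J = \AC\alg(q,F,F_0)$ for a graded-anisotropic $q$ over a graded-field $F$, again with $F_0 = F$; Ex.~\ref{tkkqfexam}, whose standing hypothesis that $M$ carry a homogeneous $F$-basis is guaranteed here by Cor.~\ref{clifforddivcor}, gives $\TKK(J,J) \cong \eso(q_\infty)$, which is model (II) of (iii). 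The reverse implication (iii) $\Rightarrow$ (ii) is immediate, since each of the two listed Lie algebras is by construction the TKK-algebra of the corresponding division-$\La$-triangulated Jordan algebra.

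Since every genuinely hard step has already been carried out in Th.~\ref{maintheorem} and its corollaries, I expect no serious obstacle here; the work is bookkeeping. The two points needing care are the transfer of the ``support spans $\La$'' condition and the disappearance, relative to Th.~\ref{secfinres}, of the full-matrix case. The label $(\rmb_2,\La)$ requires the $\La$-support of $L$ to generate $\La$; under Prop.~\ref{liecla}(iv) this becomes the $\La$-triangulated condition on $V$, hence in case (I) that $A$ is division-$\La$-graded and in case (II) that the $\La$-support of $q$ generates $\La$, exactly as recorded in (iii). Finally, the case $\mathfrak{sl}_4(B)$ present in Th.~\ref{secfinres} drops out: a division-triangulated hermitian matrix system forces its coordinate algebra to be division-graded by Lem.~\ref{hermtor}, and $B \boxplus B^{\rm op}$ with the exchange involution is never division-graded, so only the genuinely involutorial case survives.
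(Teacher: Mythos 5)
Your proposal is correct and follows essentially the same route as the paper, which proves Th.~\ref{findivres} by declaring it ``analogous'' to Th.~\ref{secfinres}: reduce (i)$\Leftrightarrow$(ii) via Prop.~\ref{liecla} and the pair/algebra correspondence, then feed the division classification (Cor.~\ref{divalg} in place of Th.~\ref{classalgth}) into Ex.~\ref{lieexamher} and Ex.~\ref{tkkqfexam}. Your explicit remarks on the transfer of the $\La$-support condition and on why the $\mathfrak{sl}_4(B)$ case disappears are exactly the points where the analogy needs checking, and they are handled correctly.
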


In particular, using Lem.~\ref{tkkc2} we get the following
corollary.

\begin{corollary} \label{finalres} A Lie algebra $L$ is a centreless Lie torus of type
$(\rmb_2,\La)$ iff $L$ is graded isomorphic to one of the following:
\begin{enumerate}
 \item[\rm (I)] A symplectic Lie algebra $\frsu_2(A,\pi)$ as in
{\rm Ex.~\ref{lieexamher}}, where $A$ is a noncommutative
 $\La$-torus with involution $\pi$ and generated by
$\rmH(A,\pi)$.

 \item[\rm (II)] An elementary orthogonal Lie algebra
$\eso(q_\infty)$ as in {\rm Ex.~\ref{tkkqfexam}} with $D$ a  torus,
 $M$ as described in {\rm Cor.~\ref{clifforddivcor}(b)},
$q: M \to D$ graded-anisotropic and $\La$ spanned by $\supp_\La M$.
\end{enumerate}
\end{corollary}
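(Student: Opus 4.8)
The plan is to obtain the corollary as the torus specialization of Theorem~\ref{findivres}, the one genuinely new input being the centrality criterion of Lemma~\ref{tkkc2}.

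First I would observe that a centreless Lie torus of type $(\rmb_2,\La)$ is in particular a centreless division-$(\rmb_2,\La)$-graded Lie algebra, so Theorem~\ref{findivres} applies and $L$ is graded isomorphic to the TKK-algebra of a division-$\La$-triangulated Jordan algebra, equivalently to $\TKK(V)$ for a division-$\La$-triangulated Jordan pair $V=(J,J)$. To upgrade ``division'' to ``torus'' on the Jordan side, I would use Prop.~\ref{liecla}(a): the explicit description of the homogeneous spaces $\TKK(V)_\al^\la$ shows that the dimension restriction $\dim_k L_\al^\la\le 1$ for $\al\in\De$ forces $\dim_k J_i^\la\le 1$ and $\dim_k M^\la\le 1$, so that $V$ is in fact a $\La$-triangulated Jordan pair torus.

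Next I would invoke the classification of triangulated Jordan pair tori, Cor.~\ref{jordantori}: the pair $V$ is graded isomorphic to $(J,J)$ with $J$ either the diagonal hermitian matrix algebra $\rmH_2(A,A_0,\pi)$ over a noncommutative torus $A$, or the quadratic form algebra $\AC\alg(q,F,F_0)$ over a torus $F$ with $M$ as in Cor.~\ref{clifforddivcor}(b). Since $\frac{1}{2}\in k$ the ample subspaces are forced, $A_0=\rmH(A,\pi)$ and $F_0=F$, and ``diagonal'' means precisely that $A$ is generated by $\rmH(A,\pi)$; the noncommutative/commutative dichotomy of the coordinate ring separates the two cases. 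I would then translate each case to a Lie algebra via the TKK-models: Ex.~\ref{lieexamher} identifies $\TKK(V)$ with $\frsu_2(A,\pi)/Z(\frsu_2(A,\pi))$ in the hermitian case, and Ex.~\ref{tkkqfexam} identifies it with $\eso(q_\infty)$ in the Clifford case, with $q$ graded-anisotropic, $D=F$ a torus, and $\La$ spanned by $\supp_\La M$ (the latter recording that $L$ is of type $(\rmb_2,\La)$, not merely of type $\rmb_2$).

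The one place where the torus hypothesis does real work, and the main point to get right, is the passage from the quotient $\frsu_2(A,\pi)/Z(\frsu_2(A,\pi))$ to $\frsu_2(A,\pi)$ itself: since $A$ is a torus we have $A=Z(A)\oplus[A,A]$ by \cite[Prop.~3.3]{NY}, so Lemma~\ref{tkkc2} gives $Z(\frsu_2(A,\pi))=0$ and the quotient disappears, yielding case~(I) exactly as stated; in the Clifford case no such reduction is needed since $\eso(q_\infty)$ is already the TKK-algebra. For the converse direction I would run the argument backwards: for each family I would form the corresponding $\La$-triangulated Jordan pair torus $(J,J)$ and apply Prop.~\ref{liecla}(iv) to conclude that $\frsu_2(A,\pi)$, respectively $\eso(q_\infty)$, is a Lie torus of type $(\rmb_2,\La)$, centrelessness again coming from Lemma~\ref{tkkc2} in the hermitian case and from the construction of Ex.~\ref{tkkqfexam} in the Clifford case. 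The remaining bookkeeping of the support conditions ensuring that the grading group is exactly $\La$ is routine but is the only slightly delicate step.
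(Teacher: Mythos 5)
Your proposal is correct and follows the paper's own route: the paper obtains the corollary exactly by specializing Theorem~\ref{findivres} to the torus case and invoking Lemma~\ref{tkkc2} (via $A=Z(A)\oplus[A,A]$ for a torus) to remove the central quotient in the hermitian case, which is precisely the step you single out as the new input. The additional bookkeeping you supply (matching dimensions through Prop.~\ref{liecla} and Cor.~\ref{jordantori}, and the forced ample subspaces since $\frac{1}{2}\in k$) is exactly what the paper leaves implicit.
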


\begin{remark}  Centreless division-$(\rmb_2,\La)$-graded Lie algebras over
fields of characteristic $0$ and centreless Lie tori of type
$(\rmb_2,\La) $ are also described in \cite[Th.~4.3 and
Th.~5.9]{BY}, using a different method. Our approach gives a more
precise description of these Lie algebras. The special case
$\La=\ZZ^n$ had been established before in \cite[Th.~4.87]{AG}. It
could also be deduced from the results in \cite[Th.~9]{F}, see
Rem.~\ref{jaznclarem}.

The data $A$, $D$, $M$ and $q$ occurring for $\La=\ZZ^n$ in
Cor.~\ref{finalres} are described in detail in Cor.~\ref{jazncla}.
\end{remark}

\def\bysame{\leavevmode\hbox to3em{\hrulefill}\thinspace}
\bibliographystyle{amsalpha}

\end{document}